\theoremstyle{plain}
\newtheorem{theorem}{Theorem}[section]
\newtheorem{lemma}[theorem]{Lemma}
\newtheorem{corollary}[theorem]{Corollary}
\newtheorem{Assumption}{Assumption}
\newtheorem{remark}{Remark}[section]
\theoremstyle{remark}
\newcommand{\non}{\nonumber}
\begin{document}
	
	\begin{frontmatter}
		\title{Asymptotic properties of a multicolored random reinforced urn model with an application to multi-armed bandits}

		\begin{aug}
			
			\author[A]{\fnms{Li}~\snm{Yang}\ead[label=e1]{baizd@nenu.edu.cn}},
			\author[A]{\fnms{Jiang}~\snm{Hu}\ead[label=e2]{huj156@nenu.edu.cn}}
			\author[A]{\fnms{Jianghao}~\snm{Li}\ead[label=e3]{lijh458@nenu.edu.cn}}
			\and
			\author[A]{\fnms{Zhidong}~\snm{Bai}\ead[label=e4]{baizd@nenu.edu.cn}}

			\address[A]{KLASMOE and School of Mathematics and Statistics, Northeast Normal University, Changchun, 130024, China\printead[presep={,\ }]{e1,e2,e3,e4}}
		\end{aug}

		\begin{abstract}
			The random self-reinforcement mechanism, characterized by the principle of ``the rich get richer'', has demonstrated significant utility across various domains. One prominent model embodying this mechanism is the random reinforcement urn model. This paper investigates a multicolored, multiple-drawing variant of the random reinforced urn model. We establish the limiting behavior of the normalized urn composition and demonstrate strong convergence upon scaling the counts of each color. Additionally, we derive strong convergence estimators for the reinforcement means, i.e., for	the expectations of the replacement matrix's diagonal elements, and prove their joint asymptotic normality. It is noteworthy that the estimators of the largest reinforcement mean are asymptotically independent of the estimators of the other smaller reinforcement means. Additionally, if a reinforcement mean is not the largest, the estimators of these smaller reinforcement means will also demonstrate asymptotic independence among themselves.	Furthermore, we explore the parallels between the reinforced mechanisms in random reinforced urn models and multi-armed bandits, addressing hypothesis testing for expected payoffs in the latter context.
			
		\end{abstract}

		\begin{keyword}[class=MSC]
			\kwd[Primary]{60B10}
			\kwd{60F05}
			\kwd[; secondary ]{62B15}
		\end{keyword}

		\begin{keyword}
			\kwd{Randomly reinforced urn}
			\kwd{Multiple-drawing urn}
			\kwd{Multi-armed Bandits}
			\kwd{Multidimensional Hypergeometric Distribution}
		\end{keyword}
		
	\end{frontmatter}

	\section{Introduction}\label{sec1}
	The urn model, originally introduced by \cite{r1}, is described as follows. An urn initially contains $b$ black balls and $w$ white balls. Each stage involves randomly drawing a ball, with the probability of drawing a black ball being equal to the fraction of black balls in the urn. Upon drawing, the ball is returned to the urn along with an additional $a$ balls of the same color. This process defines the P\'{o}lya urn model, which has been extensively employed in medical simulations of disease infections. Over the past century, various extensions of the P\'{o}lya urn model have been developed, leading to its application across diverse fields such as economics (\cite{r32,r4}), biology (\cite{r38,r39,r40}), informatics (\cite{r41,r43,r44}), machine learning (\cite{r33,r34}), and adaptive designs in clinical trials (\cite{r35,r36,r37}).
	
	Model variations primarily stem from differing protocols for the addition and drawing of balls. Bernard Friedman expanded on the P\'{o}lya urn by introducing a model in \cite{r45} where each drawn ball is returned to the urn with an additional $a$ balls of the same color and $b$ balls of a different color. Subsequently, in \cite{r35}, Wei and Durham adapted Friedman's model for use in adaptive designs for clinical trials. Moreover, this model was further generalized to the GFU model (Generalized Friedman's Urn model, also called GPU in the literature), which incorporates multicolored balls and an irreducible replacement matrix. Each ball color in the GFU model represents a different treatment option, administered sequentially to patients as they are received. Upon each patient's arrival, a ball is drawn at random, and the patient is assigned the treatment corresponding to the ball's color. Treatment assignments are then adjusted based on the patient's response, effectively altering the probability of each treatment being assigned subsequently. Under specific conditions, such as finite moments for the irreducible replacement matrix, the normalized urn composition converges almost surely to the dominant eigenvector of the conditional expectation of the replacement matrix, detailed further in \cite{r21,r22,r23,r24,r25,r26}. Recent years have seen the exploration of numerous other generalizations, including urn models with triangular (\cite{r17}), group multiplication (\cite{r49, r31}), non-square (\cite{r29, r30}), and anti-diagonal (\cite{r8, r9}) replacement matrices, as well as Randomly Reinforced P\'{o}lya urn (RRU) models featuring diagonal replacement matrices.
	
	The foundational P\'{o}lya urn model underpins the RRU models, where the rule for adding balls post-drawing is captured by the replacement matrix
	\begin{equation*}
		{\bf R}_n=\begin{pmatrix}
			a~ & 0\\
			0~ & a
		\end{pmatrix},
	\end{equation*}
	where the $k$th row ($k=1,2$) of the matrix denotes the addition of $a$ balls of the color corresponding to the drawn ball. Research in \cite{r2} confirms that the proportion of black balls converges almost surely to a random variable $Y$, following a Beta distribution $Y \sim{\rm Beta}(b/a, w/a)$ (refer to Theorem 1 in section 9.1). An extension to the two-color generalized P\'{o}lya urn model is considered in \cite{r3}, where a random number $A_n$ of balls is added at each stage, described by the replacement matrix
	\begin{equation*}
		{\bf R}_n=\begin{pmatrix}
			A_n & 0\\
			0 & A_n
		\end{pmatrix},
	\end{equation*}
	with $\{A_n, n=1,2,\dots\}$ as a sequence of independent and identically distributed(i.i.d.) non-negative random variables. Following $n$ stages, the urn's composition and the count of each color drawn are detailed as follows,
	\begin{eqnarray*}
		H_{n1}&=&b+\sum\limits_{i=1}^{n}X_iA_i, \ \ H_{n2}=w+\sum\limits_{i=1}^{n}(1-X_i)A_i,\\
		Z_n&=&\frac{H_{n1}}{H_{n1}+H_{n2}},\ \ N_{n1}=\sum\limits_{i=1}^{n}X_i, \ \ N_{n2}=\sum\limits_{i=1}^{n}(1-X_i),
	\end{eqnarray*}
	where $X_i$ denotes the result of the $i$th draw, and $X_i\sim {\rm Bernoulli}(Z_{i-1})$. The paper \cite{r3} shows that the proportion of black balls, $Z_n$, behaves as a bounded martingale and thus converges almost surely to a limit $Z_\infty$. Likewise, the proportion of black balls drawn, $N_{n1}/n$, also converges almost surely to $Z_\infty$. When $A_n$ is uniformly distributed at point $a$, this model reverts to the standard P\'{o}lya urn model, hence $Z_\infty\sim{\rm Beta}(b/a, w/a)$. When $A_n$ follows a two-point distribution, the paper \cite{r3} derived a random variable $G_n$ with respect to $H_{n1}$ and $H_{n2}$ such that the distributional distance between $G_n$ and $Z_\infty$ approaches zero.  In a later study, the paper \cite{r5} defined a two-color RRU model with a distinct replacement rule for each color, formalized as
	\begin{equation}\label{eqx1}
		{\bf R}_n=\begin{pmatrix}
			A_{n1} & 0\\
			0 & A_{n2}
		\end{pmatrix},
	\end{equation}
	where $\{A_{n1}, n=1,2,\dots\}$ and $\{A_{n2}, n=1,2,\dots\}$ are sequences of i.i.d. non-negative random variables with bounded support, respectively. The paper \cite{r5} demonstrated that the proportion of black balls, $Z_n$, eventually behaves as a bounded super- or sub-martingale, hence converging almost surely to a limit $Z_\infty$. Assuming that
	\begin{equation*}
		m_1={\mathbb E}A_{n1}>{\mathbb E}A_{n2}=m_2,
	\end{equation*}
	it was proven that $\{Z_\infty=1\}$ holds almost surely. Additionally, a multicolored RRU model was considered in \cite{r5}, with the replacement matrix defined as
	\begin{equation}\label{eqx2}
		{\mathbf R}_n=\begin{pmatrix}
			A_{n1} & 0 & \dots & 0\\
			0 & A_{n2} & \dots & 0\\
			\vdots & \vdots & \ddots & \vdots\\
			0 & 0 & \dots & A_{nd}
		\end{pmatrix},
	\end{equation}
	and it was established that under the condition
	\begin{equation*}
		m_1={\mathbb E}A_{n1}>{\mathbb E}A_{nk}=m_k,\ \ \mbox{for\ all}\ k\in\{2,3,\dots,d\},
	\end{equation*}
	the proportion of the first color of balls, $Z_n$, converges to 1 almost surely, thus relegating the proportions of other colors to zero. Correspondingly, the proportions of other colors drawn, $N_{nk}/n(k\in\{2,3,\dots,d\})$, also converge to zero almost surely.
	
	When implemented in adaptive clinical trial designs, the RRU model ensures that the proportion of patients receiving the superior treatment, $N_{n1}/n$, converges almost surely to 1, whereas the proportion receiving the lesser treatment, $N_{n2}/n$, approaches zero. If treatments are deemed equivalent, the distribution of patients across treatments stabilizes to a non-degenerate random variable ranging between [0,1]. For the two-color RRU model represented by the replacement matrix (\ref{eqx1}), the paper \cite{r10} initially examined the precise order of the number of patients allocated to the lesser treatment, demonstrating that for case $m_1 \ge m_2$, there exists a random variable $\eta^2$ with $\mathbb P(0<\eta^2<\infty)=1$ such that
	\begin{equation*}
		\lim\limits_{n\to\infty}\frac{N_{n2}}{n^{m_2/m_1}}\overset{a.s.}{\to}\eta^2.
	\end{equation*}
	Moreover, they established joint asymptotic normality for the strong consistency estimators of parameters $m_1,m_2$. Building on this, they addressed the hypothesis testing problem
	$$H_0: m_1=m_2\ \ \ {\rm versus}\ \ \ H_1:m_1>m_2,$$
	and demonstrated that the test's power is a function of $\eta$. A few years later, for the multicolored RRU model with replacement matrix (\ref{eqx2}), the paper \cite{r11} investigated the exact convergence rate of the number of patients allocated to inferior treatments. Under the $L\log L$ moment assumption for the replacement matrix, it was found that for all $k,s\in\{1,2,\dots,d\}$, there exist $d$ positive random variables $\xi_{k}$ such that:
	\begin{equation*}
		\frac{H^{1/m_k}_{nk}}{H^{1/m_s}_{ns}}\overset{a.s.}{\to}\frac{\xi^{1/m_k}_k}{\xi^{1/m_s}_s},\ \ \ 	\frac{N^{1/m_k}_{nk}}{N^{1/m_s}_{ns}}\overset{a.s.}{\to}\frac{(\xi_k/m_k)^{1/m_k}}{(\xi_s/m_s)^{1/m_s}},
	\end{equation*}
	and
	\begin{equation*} \frac{H_{nk}}{n^{m_k/m_{\max}}}\overset{a.s.}{\to}\left(\frac{m_{\max}}{\sum_{\{j:m_j=m_{\max}\}}\xi_j}\right)^{m_k/m_{\max}}\xi_k,
	\end{equation*}
	where $H_{nk}$ denotes the number of balls of color $k$ in the urn after $n$ stages and $N_{nk}$ denotes the number of times type $k$ balls have been drawn in the preceding $n$ stages. In scenarios where both $H_{0k}$ and $A_{nk}$ are integers, an alternate formulation for $\xi_k(k\in\{1,2,\dots,d\})$ was proposed, with the conjecture that this result extends to non-integer systems as well. This marked the first precise determination of the limiting distribution of $H_{nk}$, i.e., the distribution of $Z_\infty$, under an RRU model with non-uniquely maximal expected values for the diagonal entries of the replacement matrix. For further details on the second order asymptotic properties of $H_{nk}$, refer to \cite{r15,r16}. Additionally, the studies in \cite{r19} and \cite{r20} explored a response adaptive design, defined via a two-color urn model aiming at fixed asymptotic allocations, known respectively as MRRU and ARRU in the literature.
	
	Recent advancements include a two-color RRU model with multiple draws per stage, as proposed in \cite{r12}. At each stage, rather than drawing a single ball, a random number $N_n$ of balls are drawn. Define $X_n$ as the number of first-color balls among the $N_n$ drawn. Following $n$ stages, the urn's composition and the count of each color drawn are detailed as follows,
	\begin{eqnarray*}
		H_{n1}=H_{0,1}+\sum\limits_{i=1}^{n}X_iA_{i1},\ \ \	H_{n2}=H_{0,2}+\sum\limits_{i=1}^{n}(N_i-X_i)A_{i2},\ \ \
		Z_n=\frac{H_{n1}}{H_{n1}+H_{n2}},
	\end{eqnarray*}
	and
	\begin{eqnarray*}
		N_{n1}=\sum\limits_{i=1}^{n}X_i,\ \ \
		N_{n2}=\sum\limits_{i=1}^{n}(N_i-X_i),
	\end{eqnarray*}
	respectively. The paper \cite{r12} elucidated the first- and second-order asymptotic properties of $Z_n$. Subsequently, the paper \cite{r13} investigated the precise orders of $N_{n1}$ and $N_{n2}$. Additionally, they developed strongly consistent estimators for $m_1,m_2$, and established the joint asymptotic normality of these estimators. Based on these findings, they examined the one-sided hypothesis testing problem concerning $m_1=m_2$.
	
	This paper focuses on a multicolored RRU model with multiple drawing (MMRU model). Specifically, we extend the findings from \cite{r10}, \cite{r12}, and \cite{r13} to a multicolored, multiple drawing RRU model under a more lenient moment condition for the replacement matrix. Initially, under the MMRU model, we determine the limit of the normalized urn composition. As the number of updates $n$ progresses towards infinity, the proportion of each type of ball converges almost surely. Our results indicate that the number of balls with the highest reinforcement expectation increases indefinitely in the order of $n$, while the counts of other types diminish to $o(n)$. Additionally, we establish the exact order for the count of balls of each type in the urn and of each type drawn, along with their respective convergences post-scaling. The reinforcement mechanism within the MMRU model aligns with a branch of reinforcement learning known as the multi-armed bandits problem, where the reinforcement expectations mirror the expected payoffs of the bandits' arms. Consequently, under the MMRU model, we derive strong consistency estimators for the reinforcement expectations and ascertain the joint asymptotic normality of these estimators. Finally, we address the hypothesis testing problem concerning the expected payoffs for the arms of the multi-armed bandits.
	
	The MMRU model can also be related to iterative processes with enhancement mechanisms, exemplified by the election of candidates. Consider an organization with $d$ candidates, where initially, only a subset of eligible voters cast their votes, possibly due to insufficient knowledge regarding the candidates. Each candidate starts with a certain number of supporters. At each subsequent stage $n$, $N_n$ previously decisive voters are randomly selected, and these individuals may influence yet undecided voters to support their chosen candidate. The persuasiveness of a previously decisive voter largely depends on the ``attractiveness'' of the candidate they support, with each supporter capable of swaying an equivalent number of undecided voters within a given timeframe, averaged as the ``attractiveness factor'' of the candidate. This enhancement process is modeled through the randomness of $A_{nk}(n\ge1,\ k\in\{1,2,\dots,d\})$, with candidates possessing higher ``attractiveness factors'' seemingly holding an advantage.
	
	The paper is organized as follows.  Section \ref{sec2} introduces the necessary notation and assumptions. Section \ref{sec3} presents the main results. In Section \ref{sec4}, we discusses the derivation of strong consistency estimators for the reinforcement means and the joint asymptotic normality of these estimators. Section \ref{sec5} outlines the testing methodology for the reinforcement means. Section \ref{sec6} explores practical examples and displays simulation outcomes. Finally, Appendices \ref{secA0}, \ref{secA} and \ref{secB} provide proofs of main results, technical results and recollections of auxiliary findings, respectively.

	\section{Notation and Assumptions}\label{sec2}
	We initiate our model with an urn containing balls of $d$ distinct colors. Define the initial composition of the urn by ${\bf H}_{0}=(H_{01},H_{02},\dots,H_{0d})^{\top}$, where $H_{0k}$ denotes the count of the $k$th color balls, for $k \in \{1,2,\dots,d\}$. Let $S_0=|{\bf H}_0|=\sum_{k=1}^{d}H_{0k}$ represent the total number of balls initially. At each stage $n$, a random number $N_n$ of balls is drawn, either with or without replacement, depending on the experiment design. The composition of these drawn balls is represented by ${\bf X}_n=(X_{n1},X_{n2},\dots,X_{nd})^{\top}$, where $X_{nk}$ indicates the number of balls of the $k$th color drawn at stage $n$. The replacement matrix is specified as
	\begin{equation*}
		{\bf R}_n=\begin{pmatrix}
			A_{n1}&0&\dots&0\\
			0&A_{n2}&\dots&0\\
			\vdots&\vdots&\ddots&\vdots\\
			0&0&\dots&A_{nd}
		\end{pmatrix},
	\end{equation*}
	where each $A_{n1}, A_{n2},\dots,A_{nd}$ are non-negative random variables representing the number of balls of each color added back to the urn after drawing. The urn's composition after $n$ stages becomes ${\bf H}_n=(H_{n1},H_{n2},\dots,H_{nd})^{\top}$, and the total number of balls is updated to $S_n=|{\bf H}_n|=\sum_{k=1}^d H_{nk}$. The renewal process for the urn can be expressed as
	$${\bf H}_n={\bf H}_0+\sum\limits_{i=1}^{n}{\bf X}_i{\bf R}_i,\ \ \  S_n=S_0+\sum\limits_{i=1}^{n}\sum\limits_{k=1}^dA_{ik}X_{ik}.$$
	The normalized urn composition is then defined as
	$${\bf Z}_{n}=(Z_{n1},Z_{n2},\dots,Z_{nd})^{\top}=\frac{{\bf H}_n}{S_n}.$$
	For each color $k$, the cumulative number of balls of that color drawn by stage $n$ is
	$$N_{A_k,n}=\sum_{j=1}^{n}X_{jk}.$$Furthermore, we set 
	\begin{eqnarray*}
		\mathcal{F}_0&=&\{\varnothing,\omega\},\ \ \mathcal{F}_n=\sigma\{N_1,{\bf X}_1,{\bf R}_1,\dots,N_n,{\bf X}_n,{\bf R}_n\},\\
		\mathcal{G}_n&=&\sigma\{N_1,{\bf X}_1,{\bf R}_1,\dots,N_n,{\bf X}_n,{\bf R}_n, N_{n+1}\}.
	\end{eqnarray*}
	We assume that the number of balls drawn at each stage, the drawing rule and the replacement matrix satisfy the following distributions.
	\begin{enumerate}[label=\Roman*.]
		\item Support of $N_{n+1}$: The support of $N_{n+1}$ is contained in $\{1\wedge C_1,2\wedge C_1,\dots,S_{n}\wedge C_1\}$, where $C_1$ is some constant.
		\item Conditional distribution of ${\bf X}_{n+1}$ given $\mathcal{G}_n$:\\
		{\rm (a)} Without replacement. Conditional on $\mathcal{G}_n$, 
		$${\bf X}_{n+1}=(X_{n+1,1},X_{n+1,2},\dots,X_{n+1,d})^{\top}\sim {\rm Multi-Hyper}(S_n,N_{n+1},{\bf H}_n),$$
		that is, set ${\bf{\cal X}}_{n+1}=\{(x_1,x_2\dots,x_d)^{\top}: \prod_{k=1}^{d}\mathbb I(0\le x_k\le H_{nk},\sum_{k=1}^{d}x_k=N_{n+1})=1\}$. Given ${\cal G}_n$, for all ${\bf x}_{n+1}\in {\bf{\cal X}}_{n+1}$, we have
		$${\mathbb P}_{{\bf x}_{n+1}}={\mathbb P}({\bf X}_{n+1}={\bf x}_{n+1})={\mathbb P}(X_{n+1,1}=x_{1},\dots,X_{n+1,d}=x_{d})=\frac{ \prod_ {k=1}^{d}{H_{nk}\choose{x_{k}}}}{{S_n\choose N_{n+1}}}.$$
		{\rm (b)} With replacement. Conditional on $\mathcal{G}_n$, 
		$${\bf X}_{n+1}=(X_{n+1,1},X_{n+1,2},\dots,X_{n+1,d})^{\top}\sim {\rm Multinomial}(N_{n+1},{\bf Z}_n),$$
		that is, set ${\bf{\cal X}}_{n+1}=\{(x_1,\dots,x_d)^{\top}: \prod_{k=1}^{d}\mathbb I(0\le x_k\le N_{n+1},\sum_{k=1}^{d}x_k=N_{n+1})=1\}$. Given ${\cal G}_n$, for all ${\bf x}_{n+1}\in {\cal X}_{n+1}$, we have
		$${\mathbb P}_{{\bf x}_{n+1}}={\mathbb P}({\bf X}_{n+1}={\bf x}_{n+1})={\mathbb P}(X_{n+1,1}=x_{1},\dots,X_{n+1,d}=x_{d})=\frac{N_{n+1}!}{ \prod_ {k=1}^ {d} x_{k}!} \prod_ {k=1}^ {d} Z_{nk}^{x_{k}}.$$
		\item Replacement matrix ${\bf R}_{n+1}$: For all $k\in\{1,2,\dots,d\}$, $A_{n+1,k}$ takes value on $[1,+\infty)$, and ${\bf R}_{n+1}$ is independent of ${\cal G}_n$. However, the components of ${\bf R}_{n+1}$ may be correlated with each other.
	\end{enumerate}
	Subsequent assumptions are necessary for the analysis:
	\begin{Assumption} \label{as1}
		There exists a constant $C_2$ such that for all $k\in\{1,2,\dots,d\}$,
		$\sup_n\mathbb E(A^3_{nk})<C_2.$
	\end{Assumption}

	\begin{Assumption} \label{as2}
		Assume that for all $k\in\{1,2,\dots,d\}$,
		\begin{equation*}
			m_{nk}:=\mathbb E(A_{nk})\to m_k,\ \ when\ n\to\infty, 
		\end{equation*}
		where $m_1,m_2,\dots,m_d$ are $d$ finite real numbers satisfying 
		\begin{equation*}
			m_1=\dots=m_t>m_{t+1}\ge m_{t+2}\ge\dots\ge m_d
		\end{equation*}
		for some $t\in\{1,2,\dots,d\}$.
		In addition, we assume that for all $n=1,2,\dots$, 
		\begin{equation*}
			m_{n1}=m_{n2}=\dots=m_{nt}.
		\end{equation*}
	\end{Assumption}
	
	\begin{Assumption} \label{as3}
		Assume that for all $k,j\in\{1,2,\dots,d\}$, 
		\begin{equation*}
			\sum\limits_{n=1}^{\infty}\frac{|m_{n+1,k}m_j-m_km_{n+1,j}|}{n}<\infty.
		\end{equation*}
	\end{Assumption}

	\begin{Assumption} \label{as4}
		Assume that for all $k\in\{1,2,\dots,d\}$,
		\begin{equation*}
			|m_{nk}-m_k|=o(n^{-m_k/2m_1}).
		\end{equation*}
	\end{Assumption}

	\begin{remark}
		In the literature, the replacement matrices $\{{\bf R}_n,n\ge 1\}$ are often assumed to be i.i.d. as in \cite{r10} and uniformly bounded as detailed in \cite{r12} and \cite{r13}. However, Assumption \ref{as1} introduces a less restrictive condition by only necessitating control over the second-order moments for the convergence analysis of the normalized urn composition $Z_{nk}$. When delving into the precise order of $Z_{nk}$, the requirement extends to the third-order moments. Importantly, this assumption can be further relaxed to the $2+\epsilon$ order moments when the matrix components $A_{nk}$ are uncorrelated. This nuanced understanding of moment conditions enriches the theoretical analysis, facilitating a broader application of the results. Detailed proofs of these assertions can be found in the proof of Theorem \ref{th4} and Lemma \ref{leA2}, where these moment conditions are leveraged to establish the desired asymptotic properties.
		Further, to ascertain the exact order of $Z_{nk}$, it is necessary to monitor the convergence rate of $m_{nk}$ towards $m_k$. If there exists $\epsilon > 0$ such that $|m_{nk} - m_k| = O((\log n)^{-(1+\epsilon)})$, then Assumption \ref{as3} is satisfied. Moreover, to determine the convergence rate for the estimator of $m_k$, a more rapid convergence of $m_{nk}$ is requisite, as specified in Assumption \ref{as4}.
	\end{remark}

	In the sequel, the paper presents results and proofs under the scenario of case II(a), where the drawing rule adheres to a Multidimensional Hypergeometric Distribution. It should be noted that these results remain valid under case II(b), where the drawing rule follows a Multinomial Distribution. Under certain assumptions, this work details the first-order asymptotic properties of the normalized urn composition, the exact order of each color of balls, the tally of each color of balls drawn, and provides strongly consistent estimations of the limit of $\mathbb E({\bf R}_n)$ along with the corresponding convergence rates. Both the first-order asymptotic properties and the exact order calculations involve the conditional expectations of $\{{\bf X}_n, n=1,2,\dots\}$, while estimations of the limit of $\mathbb E({\bf R}_n)$ and rate of convergence for $\mathbb E({\bf R}_{n})$ utilize the conditional covariance matrix of $\{{\bf X}_n, n=1,2,\dots\}$. Notably, for ${\bf X}^{(1)}_{n+1} \sim \text{Multi-Hyper}(S_n, N_{n+1}, {\bf H}_n)$ and ${\bf X}^{(2)}_{n+1} \sim \text{Multinomial}(N_{n+1}, {\bf Z}_n)$, the mathematical expectations and covariance matrices of ${\bf X}^{(1)}_{n+1}$ and ${\bf X}^{(2)}_{n+1}$ converge as $n$ increases.

	\section{Asymptotic properties of ${\bf Z}_n$}\label{sec3}
	In this section, we consider the limit of the normalized urn composition ${\bf Z}_n$ and the exact order of its components. 
	\subsection{The limit of ${\bf Z}_n$}
	In order to obtain the limit of ${\bf Z}_n$, we first need to consider whether the number of balls of each color tends to infinity as $n$ tends to infinity. 
	\begin{lemma}\label{le1}
		Under Assumption \ref{as1}, for all $k\in\{1,2,\dots,d\}$, we have
		$H_{nk}\overset{a.s.}{\to}\infty.$
	\end{lemma}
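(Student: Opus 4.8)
The plan is to reduce the claim to showing that each color is drawn infinitely often, and then to establish that fact through a conditional Borel--Cantelli argument. Since every diagonal entry satisfies $A_{ik}\ge 1$ and $X_{ik}\ge 0$, the update rule gives $H_{nk}=H_{0k}+\sum_{i=1}^{n}A_{ik}X_{ik}\ge H_{0k}+\sum_{i=1}^{n}X_{ik}$, so $H_{nk}$ is nondecreasing and grows by at least one unit whenever $X_{nk}\ge 1$. Consequently $\{H_{nk}\not\to\infty\}$ coincides with the event that color $k$ is drawn only finitely often, i.e. $\{X_{nk}\ge 1\ \text{i.o.}\}^{c}$ (here I use that the urn starts with $H_{0k}\ge 1$, so color $k$ is present and can be drawn at all). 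It therefore suffices to prove that $X_{nk}\ge 1$ infinitely often, almost surely, for each $k$.

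First I would produce a lower bound for the conditional probability of drawing color $k$. Working in case II(a), the multi-hypergeometric law gives
\begin{equation*}
	\mathbb P(X_{n+1,k}=0\mid\mathcal G_n)=\prod_{j=0}^{N_{n+1}-1}\frac{S_n-H_{nk}-j}{S_n-j}\le\frac{S_n-H_{nk}}{S_n},
\end{equation*}
so that $\mathbb P(X_{n+1,k}\ge 1\mid\mathcal G_n)\ge H_{nk}/S_n\ge H_{0k}/S_n\ge 1/S_n$. Because $H_{nk}$ and $S_n$ are $\mathcal F_n$-measurable, averaging over $N_{n+1}$ yields the same bound for $\mathbb P(X_{n+1,k}\ge 1\mid\mathcal F_n)$. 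By the conditional (Lévy) form of the second Borel--Cantelli lemma, $\{X_{nk}\ge 1\ \text{i.o.}\}=\{\sum_{n}\mathbb P(X_{n,k}\ge 1\mid\mathcal F_{n-1})=\infty\}$ almost surely, so the argument reduces to verifying $\sum_{n}1/S_n=\infty$ almost surely, which will follow once the growth of $S_n$ is controlled from above.

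The main obstacle is the almost sure upper bound $S_n=O(n)$. Here I would use $S_n-S_0=\sum_{i=1}^{n}\sum_{k=1}^{d}A_{ik}X_{ik}\le C_1\sum_{i=1}^{n}\sum_{k=1}^{d}A_{ik}$, where the inequality uses $\sum_k X_{ik}=N_i\le C_1$ together with $X_{ik}\ge 0$. By Assumption III the matrices $\{{\bf R}_i\}$ are independent across $i$, hence the summands $\xi_i:=\sum_{k}A_{ik}$ are independent, and Assumption \ref{as1} bounds their variances uniformly (a third-moment bound controls the second moments via Lyapunov). Kolmogorov's criterion $\sum_{i}\mathrm{Var}(\xi_i)/i^2<\infty$ then yields, through the strong law for independent summands and a Cesàro average of $m_{ik}\to m_k$, the bound $\limsup_{n}S_n/n\le C_1\sum_{k=1}^{d}m_k<\infty$ almost surely; thus $S_n\le Cn$ eventually for some finite random $C$. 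Combined with the preceding step this makes $\sum_{n}1/S_n=\infty$ almost surely, so $X_{nk}\ge 1$ infinitely often and $H_{nk}\overset{a.s.}{\to}\infty$ for every $k$. The points requiring care are the uniform variance estimate under Assumption \ref{as1} and the measurability bookkeeping needed to pass from $\mathcal G_n$-conditioning to $\mathcal F_n$-conditioning in the Borel--Cantelli step.
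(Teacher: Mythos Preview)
Your proposal is correct and follows essentially the same route as the paper's proof: both reduce to showing that color $k$ is drawn infinitely often via the conditional Borel--Cantelli lemma, both bound $\mathbb P(X_{n+1,k}=0\mid\mathcal G_n)\le (S_n-H_{nk})/S_n$ from the hypergeometric formula, and both obtain $S_n=O_{\text{a.s.}}(n)$ from $S_n\le S_0+C_1\sum_{i}\sum_{k}A_{ik}$. One small point: in the Ces\`aro step you invoke $m_{ik}\to m_k$, which is Assumption~\ref{as2}, whereas the lemma is stated under Assumption~\ref{as1} alone; this is harmless, since the third-moment bound already gives $\sup_{i,k}m_{ik}\le C_2^{1/3}<\infty$, and that uniform bound on the means is all the Kolmogorov SLLN needs to conclude $\limsup_n S_n/n<\infty$ almost surely.
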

	
	The Lemma \ref{le1} shows that as $n$ tends to infinity, the number of balls of each color in the urn converges almost surely to  infinity eventually. Next, we consider the limit of ${\bf Z}_n$.
	\begin{theorem}\label{th1}
		Under Assumptions \ref{as1}-\ref{as2}, we have, for all $k\in\{1,\dots,t\}$, $Z_{nk}$ is eventually a bounded submartingale. Therefore, it converges almost surely to a random variable $Z_k$ in $[0,1]$.
	\end{theorem}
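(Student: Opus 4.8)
The plan is to work directly with the one-step increment of $Z_{nk}=H_{nk}/S_n$ and to show that its conditional expectation is nonnegative up to an almost surely summable remainder. Write $u=A_{n+1,k}X_{n+1,k}$ and $v=\sum_{j=1}^{d}A_{n+1,j}X_{n+1,j}$, so that $H_{n+1,k}=H_{nk}+u$, $S_{n+1}=S_n+v$, $0\le u\le v$, and $H_{nk}\le S_n$. A direct computation gives the exact identity
$$Z_{n+1,k}-Z_{nk}=\frac{H_{nk}+u}{S_n+v}-\frac{H_{nk}}{S_n}=\frac{uS_n-H_{nk}v}{S_n(S_n+v)}.$$
First I would condition on $\mathcal{G}_n$. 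Under case II(a) one has $\mathbb{E}[X_{n+1,k}\mid\mathcal{G}_n]=N_{n+1}Z_{nk}$, and since Assumption III makes ${\bf R}_{n+1}$ independent of $\mathcal{G}_n$ and of the draw, $\mathbb{E}[u\mid\mathcal{G}_n]=m_{n+1,k}N_{n+1}Z_{nk}$ and $\mathbb{E}[v\mid\mathcal{G}_n]=N_{n+1}\sum_{j}m_{n+1,j}Z_{nj}$.

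Next I would split the increment exactly, using $\tfrac1{S_n+v}=\tfrac1{S_n}-\tfrac{v}{S_n(S_n+v)}$, into a leading part and a remainder $\rho$:
$$Z_{n+1,k}-Z_{nk}=\Big(\frac{u}{S_n}-\frac{H_{nk}v}{S_n^2}\Big)-\underbrace{\frac{(uS_n-H_{nk}v)\,v}{S_n^2(S_n+v)}}_{=:\rho}.$$
Taking $\mathbb{E}[\cdot\mid\mathcal{G}_n]$ of the leading part and using $H_{nk}/S_n=Z_{nk}$ gives
$$T_n:=\frac{N_{n+1}Z_{nk}}{S_n}\Big(m_{n+1,k}-\sum_{j=1}^{d}m_{n+1,j}Z_{nj}\Big).$$
Because $\sum_j Z_{nj}=1$, the bracket is the gap between $m_{n+1,k}$ and a convex combination of the $m_{n+1,j}$. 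For $k\le t$, Assumption \ref{as2} gives $m_{n+1,k}=m_{n+1,1}=\dots=m_{n+1,t}$, and since $m_{n+1,j}\to m_j<m_1$ for $j>t$ there is an $n_0$ with $m_{n+1,k}=\max_j m_{n+1,j}$ for all $n\ge n_0$; hence $T_n\ge0$ for $n\ge n_0$. This is exactly where the ``largest reinforcement mean'' hypothesis and the word ``eventually'' in the statement enter.

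The remainder I would bound by a summable sequence. From $|uS_n-H_{nk}v|\le 2S_nv$ and $S_n+v\ge S_n$ we get $|\rho|\le 2v^2/S_n^2$. By Cauchy--Schwarz $v^2\le N_{n+1}\sum_j A_{n+1,j}^2 X_{n+1,j}$, so, using Assumption \ref{as1} (only $\sup_n\mathbb{E}[A_{nk}^2]<\infty$ is needed here, consistent with the Remark) together with $N_{n+1}\le C_1$,
$$\mathbb{E}[v^2\mid\mathcal{G}_n]\le C_1\sum_{j}\mathbb{E}[A_{n+1,j}^2]\,N_{n+1}Z_{nj}\le C$$
for a constant $C$ independent of $n$; hence $|\mathbb{E}[\rho\mid\mathcal{F}_n]|\le 2C/S_n^2$. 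Crucially, $A_{n+1,k}\ge1$ and $N_{n+1}\ge1$ force $S_n\ge S_0+n$ (which also re-proves $S_n\to\infty$ of Lemma \ref{le1}), so $\gamma_n:=2C/S_n^2$ is $\mathcal{F}_n$-measurable and $\sum_n\gamma_n<\infty$ surely.

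Combining, for $n\ge n_0$ we obtain $\mathbb{E}[Z_{n+1,k}\mid\mathcal{F}_n]\ge Z_{nk}-\gamma_n$. I would then set $M_n=Z_{nk}+\sum_{i=n_0}^{n-1}\gamma_i$; since each $\gamma_i$ with $i\le n$ is $\mathcal{F}_n$-measurable, this inequality shows that $\{M_n\}_{n\ge n_0}$ is a genuine submartingale, and it is bounded (within $[0,\,1+\sum_i\gamma_i]$), so it converges almost surely by the martingale convergence theorem. As $\sum_i\gamma_i<\infty$, the correction converges, whence $Z_{nk}=M_n-\sum_{i=n_0}^{n-1}\gamma_i$ converges almost surely to some $Z_k$, which lies in $[0,1]$ because $0\le Z_{nk}\le1$ for every $n$. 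The main obstacle is the remainder control of the third paragraph: the denominator $S_{n+1}$ is random and positively correlated with the numerator, so the mean-field cancellation that produces $T_n$ is only approximate, and the entire argument rests on showing that the fluctuation of $1/S_{n+1}$ contributes merely an $O(S_n^{-2})$ error. The bounded-moment Assumption \ref{as1} and the bounded draw size $C_1$ are precisely what make this error summable; without them the second-order term could be of the same order as $T_n$ and the submartingale property would fail.
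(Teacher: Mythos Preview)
Your proof is correct and follows essentially the same route as the paper's. Both arguments split the increment $Z_{n+1,k}-Z_{nk}$ into a ``mean-field'' term with eventually nonnegative conditional expectation (your $T_n$ coincides with the paper's expression $\tfrac{N_{n+1}}{S_n}\sum_{j>t}Z_{nk}Z_{nj}(m_{n+1,k}-m_{n+1,j})$ after using $\sum_j Z_{nj}=1$) and a second-order remainder which, after a short computation, one checks is literally the same quantity: the paper's $e_{n+1,k}=(\tfrac1{S_n}-\tfrac1{S_{n+1}})[(1-Z_{nk})u-Z_{nk}(v-u)]$ equals your $\rho$. The paper then compensates $Z_{nk}$ by the \emph{actual} random remainder $E_{nk}=\sum_i e_{ik}$ and shows $L_{nk}=Z_{nk}+E_{nk}$ is an eventually bounded submartingale, whereas you compensate by an $\mathcal{F}_n$-measurable upper bound $\gamma_n=2C/S_n^2$ and show $M_n=Z_{nk}+\sum_{i< n}\gamma_i$ is a submartingale; both rely on $S_n\ge S_0+n$ and second-moment bounds from Assumption~\ref{as1} to make the compensator summable, and both yield the same a.s.\ convergence of $Z_{nk}$.
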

	
	Theorem \ref{th1} shows the limit of the proportions of balls of the first $t$ colors in the urn. In the following, we consider the limit of the proportion of balls of the latter $d-t$ colors.
	\begin{theorem}\label{th2}
		Under Assumptions \ref{as1}--\ref{as2}, we have
		$$\sum\limits_{k=1}^{t}Z_{nk}\overset{a.s.}{\to}1.$$
		As a result, for all $k\in\{t+1,\dots,d\}$, $Z_{nk}$ converges almost surely to $Z_k$ with $\mathbb P(Z_k=0)=1$.
	\end{theorem}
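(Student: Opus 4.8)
The plan is to reduce the statement to a single scalar quantity and analyze it as a bounded submartingale. Since $Z_{nk}=H_{nk}/S_n$ and $\sum_{k=1}^d H_{nk}=S_n$, we always have $\sum_{k=1}^d Z_{nk}=1$; hence it suffices to prove that $W_n:=\sum_{k=1}^t Z_{nk}\to 1$ almost surely, which is equivalent to $\sum_{k>t}Z_{nk}\to 0$. By Theorem \ref{th1} each $Z_{nk}$ ($k\le t$) is eventually a bounded submartingale, so the same holds for the finite sum $W_n$, whence $W_n\to W_\infty\in[0,1]$ a.s. The whole task is to show $W_\infty=1$.

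First I would compute the one-step conditional drift. Using the hypergeometric mean $\mathbb E(X_{n+1,k}\mid\mathcal G_n)=N_{n+1}Z_{nk}$ together with the independence of ${\bf R}_{n+1}$ from the draw given $\mathcal G_n$ (so that $\mathbb E(X_{n+1,k}A_{n+1,k}\mid\mathcal G_n)=m_{n+1,k}N_{n+1}Z_{nk}$), and writing
\[
Z_{n+1,k}-Z_{nk}=\frac{1}{S_{n+1}}\Big(X_{n+1,k}A_{n+1,k}-Z_{nk}\sum_{j=1}^d X_{n+1,j}A_{n+1,j}\Big),
\]
the numerator drift of $W_n$ equals $N_{n+1}W_n\sum_{j>t}(m_{n+1,1}-m_{n+1,j})Z_{nj}\ge 0$ eventually, where I used $m_{n+1,k}=m_{n+1,1}$ for $k\le t$ (Assumption \ref{as2}). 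Expanding $S_{n+1}^{-1}=S_n^{-1}+O(S_n^{-2})$ and using $S_n\asymp n$ with Assumption \ref{as1}, I expect a bound of the form $\mathbb E(W_{n+1}-W_n\mid\mathcal F_n)\ge \tfrac{c}{n}W_n(1-W_n)-\tfrac{C}{n^2}$, with $c$ proportional to the gap $m_1-m_{t+1}>0$ and the $O(n^{-2})$ error summable. Since the compensator of a bounded submartingale is a.s. finite, summing gives $\sum_n \tfrac1n W_n(1-W_n)<\infty$ a.s., and because $W_n\to W_\infty$ this forces $W_\infty(1-W_\infty)=0$, i.e. $W_\infty\in\{0,1\}$ almost surely.

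The main obstacle is excluding $W_\infty=0$, precisely because the drift above degenerates as $W_n\to 0$. To handle this I would pass to the logarithmic scale and study $\log W_n=\log T_n-\log S_n$ with $T_n:=\sum_{k\le t}H_{nk}$. The same mean computation shows that the principal part of $\mathbb E(\log W_{n+1}-\log W_n\mid\mathcal F_n)$ equals $\tfrac{\mathbb E(N_{n+1}\mid\mathcal F_n)}{S_n}\big(m_{n+1,1}-\sum_j m_{n+1,j}Z_{nj}\big)=\tfrac{\mathbb E(N_{n+1}\mid\mathcal F_n)}{S_n}\sum_{j>t}(m_{n+1,1}-m_{n+1,j})Z_{nj}\ge 0$, which on $\{W_\infty=0\}$ is $\gtrsim 1/n$ and hence has a divergent sum; since $\log W_n\le 0$, this contradicts the convergence of $\log W_n$ unless $W_\infty>0$. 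The delicate point is the second-order Taylor error of $\log(1+\cdot)$, which is of order $\sum_n T_n^{-2}$: here Lemma \ref{le1} gives $T_n\ge H_{n1}\to\infty$, but to guarantee summability one needs a crude polynomial lower bound on the dominant mass $T_n$ (a \emph{rate}, not merely divergence). I expect this error control — obtaining enough growth of $T_n$ before the exact order is available — to be the technical crux, resolvable by a bootstrap that feeds the positive logarithmic drift back into a lower bound for $T_n$. Granting it, $\log W_n$ is, up to a summable correction, a submartingale bounded above by $0$, hence converges to a finite limit, so $W_\infty>0$ a.s.; combined with the dichotomy this yields $W_\infty=1$.

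Finally, the second assertion is immediate: once $W_n\to 1$ we have $\sum_{k>t}Z_{nk}=1-W_n\to 0$, and since each summand is nonnegative, $Z_{nk}\to 0$ for every $k>t$; thus the almost sure limit $Z_k$ exists and satisfies $\mathbb P(Z_k=0)=1$.
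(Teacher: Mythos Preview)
Your outline correctly identifies the convergence of $W_n=\sum_{k\le t}Z_{nk}$ and the dichotomy $W_\infty\in\{0,1\}$ via a summable-compensator argument; that part is sound. The genuine gap is exactly where you flag it: ruling out $W_\infty=0$ by a logarithmic argument requires controlling the second-order Taylor remainder of $\log T_{n+1}-\log T_n$, and that remainder is not of order $T_n^{-2}$ but rather $\mathbb E[(\Delta T_n)^2\mid\mathcal G_n]/T_n^2$. A short computation (using $\mathbb E[X_{n+1,k}^2\mid\mathcal G_n]\le C_1N_{n+1}Z_{nk}$) gives $\mathbb E[(\Delta T_n)^2\mid\mathcal G_n]\lesssim W_n$, so the error is $\lesssim 1/(W_n S_n^2)\lesssim 1/(W_n n^2)$, which is \emph{not} summable on the very event $\{W_\infty=0\}$ you are trying to exclude. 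The proposed ``bootstrap'' is therefore circular: you need a polynomial lower bound on $T_n$ (equivalently, $W_n$ bounded below by a negative power of $n$) precisely in the regime where you have no such control, and the tools available at this stage (Lemma~\ref{le1} under Assumptions~\ref{as1}--\ref{as2} only) give divergence of $H_{nk}$, not a rate. The rate results (Lemma~\ref{leA3}) come later and use the extra condition~(\ref{eqn3}), which Theorem~\ref{th2} does not assume.

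The paper avoids this circularity by working not with $W_n$ but with the ratio $G_n=\big(\sum_{k>t}H_{nk}\big)/H_{n1}^\theta$ for a fixed $\theta\in(m_{t+1}/m_1,1)$. A one-step computation yields
\[
\mathbb E\!\left(\frac{G_{n+1}}{G_n}-1\,\Big|\,\mathcal G_n\right)\le \frac{N_{n+1}}{S_n}\left(\frac{\sum_{k>t}m_{n+1,k}H_{nk}}{\sum_{k>t}H_{nk}}-\theta\,m_{n+1,1}+O(H_{n1}^{-1})\right),
\]
which is eventually nonpositive because $\theta m_1>m_{t+1}$ and $H_{n1}\to\infty$ by Lemma~\ref{le1}; no growth rate on $H_{n1}$ is needed. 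Hence $G_n$ is eventually a nonnegative supermartingale, converges to a finite limit, and $1-W_n=G_n\cdot H_{n1}^\theta/S_n\le G_n\,S_n^{\theta-1}\to 0$ using only $S_n\ge n+1$ and $\theta<1$. The fractional exponent $\theta$ is the device that decouples the error control (needing merely $H_{n1}\to\infty$) from the vanishing of $1-W_n$ (needing merely $S_n\to\infty$); it replaces your dichotomy-plus-exclusion scheme with a single direct estimate.
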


	\begin{corollary}\label{co1}
		Under Assumptions \ref{as1}--\ref{as2} and the condition
		\begin{equation}\label{eqn3}
			\mathbb E(N_{n+1}|{\cal F}_n)\overset{a.s.}{\to}N,
		\end{equation}
		we have
		\begin{equation*}
			\frac{H_{nk}}{n}\overset{a.s.}{\to}\begin{cases}
				Nm_1Z_k,\ &\text{if\ $k\in\{1,\dots,t\},$}\\
				0, \ &\text{if\ $k\in\{t+1,\dots,d\},$}\\
			\end{cases}\ \ \ and \ \ \ \frac{S_n}{n}\overset{a.s.}{\to}Nm_1.
		\end{equation*}
	\end{corollary}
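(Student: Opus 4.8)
The plan is to reduce the entire statement to the single limit $S_n/n\overset{a.s.}{\to}Nm_1$. Since $Z_{nk}=H_{nk}/S_n$ by definition, we have $H_{nk}/n=Z_{nk}\cdot(S_n/n)$, and Theorems \ref{th1}--\ref{th2} already provide $Z_{nk}\overset{a.s.}{\to}Z_k$ for every $k$, with $Z_k=0$ for $k>t$. Hence, once $S_n/n\to Nm_1$ is in hand, the product of two a.s.\ convergent sequences yields both $H_{nk}/n\to Nm_1Z_k$ for $k\in\{1,\dots,t\}$ and $H_{nk}/n\to0$ for $k\in\{t+1,\dots,d\}$ simultaneously. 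So all the work concentrates on $S_n$.

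To analyze $S_n=S_0+\sum_{i=1}^n\Delta S_i$ with $\Delta S_i=\sum_{k=1}^dA_{ik}X_{ik}$, I would split the normalized sum into its predictable and martingale parts,
\begin{equation*}
\frac{S_n-S_0}{n}=\frac1n\sum_{i=1}^n \mathbb E(\Delta S_i\mid\mathcal F_{i-1})+\frac1n\sum_{i=1}^n\bigl(\Delta S_i-\mathbb E(\Delta S_i\mid\mathcal F_{i-1})\bigr).
\end{equation*}
For the predictable increments, using that $\mathbf R_i$ is independent of the draw given $\mathcal G_{i-1}$ together with the hypergeometric conditional mean $\mathbb E(X_{ik}\mid\mathcal G_{i-1})=N_iZ_{i-1,k}$ and measurability of $\sum_k m_{ik}Z_{i-1,k}$ with respect to $\mathcal F_{i-1}$, I obtain
\begin{equation*}
\mathbb E(\Delta S_i\mid\mathcal F_{i-1})=\mathbb E(N_i\mid\mathcal F_{i-1})\sum_{k=1}^d m_{ik}Z_{i-1,k}.
\end{equation*}
As $i\to\infty$, condition (\ref{eqn3}) forces $\mathbb E(N_i\mid\mathcal F_{i-1})\to N$, Assumption \ref{as2} gives $m_{ik}\to m_k$, and Theorems \ref{th1}--\ref{th2} give $Z_{i-1,k}\to Z_k$. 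Because $m_k=m_1$ for $k\le t$ while $Z_k=0$ for $k>t$ and $\sum_{k\le t}Z_k=1$, the inner sum tends to $m_1$, so each predictable increment converges a.s.\ to $Nm_1$; a pathwise Ces\`aro (Toeplitz) argument then pushes the first average to $Nm_1$.

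It remains to show the martingale part vanishes. Writing $M_n:=\sum_{i=1}^n(\Delta S_i-\mathbb E(\Delta S_i\mid\mathcal F_{i-1}))$, the crucial estimate is a uniform conditional second-moment bound: since $X_{ik}\ge0$ and $\sum_kX_{ik}=N_i\le C_1$ by the support condition on $N_{n+1}$, we have $\Delta S_i\le C_1\max_k A_{ik}$, whence $\mathbb E((\Delta S_i)^2\mid\mathcal F_{i-1})\le C_1^2\sum_k\mathbb E(A_{ik}^2)$, bounded uniformly in $i$ by Assumption \ref{as1} (which controls even the third moments). Consequently $\sum_i i^{-2}\mathbb E((\Delta M_i)^2\mid\mathcal F_{i-1})<\infty$ a.s., so the martingale $\sum_i \Delta M_i/i$ converges a.s., and Kronecker's lemma yields $M_n/n\to0$. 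Combining the two parts gives $S_n/n\to Nm_1$, and the reduction above completes the proof.

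The main obstacle I anticipate is the careful justification of the factorization $\mathbb E(A_{ik}X_{ik}\mid\mathcal G_{i-1})=m_{ik}\mathbb E(X_{ik}\mid\mathcal G_{i-1})$, which rests on the reinforcement $\mathbf R_i$ being independent of the draw $\mathbf X_i$ conditionally on $\mathcal G_{i-1}$, whereas the model only explicitly posits independence of $\mathbf R_i$ from $\mathcal G_{i-1}$; and, relatedly, the verification that the Toeplitz/Kronecker machinery operates with only a.s.\ (rather than $L^1$) convergence of the conditional means, which is precisely why the uniform second-moment bound supplied by Assumption \ref{as1} is indispensable.
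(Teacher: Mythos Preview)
Your proof is correct and uses the same underlying machinery as the paper (predictable/martingale decomposition, Ces\`aro--Toeplitz for the drift, Kronecker for the fluctuation), but the organization differs in a way worth noting. The paper works componentwise: it applies its Lemma~\ref{leB1} directly to each increment sequence $Y_i=A_{ik}X_{ik}$, computing $\mathbb E(A_{ik}X_{ik}\mid\mathcal F_{i-1})=m_{ik}Z_{i-1,k}\,\mathbb E(N_i\mid\mathcal F_{i-1})\to Nm_kZ_k$ (or~$0$), and only then obtains $S_n/n$ by summation. You invert the order: you exploit the factorization $H_{nk}/n=Z_{nk}\cdot(S_n/n)$ together with the already-established limits $Z_{nk}\to Z_k$ from Theorems~\ref{th1}--\ref{th2}, so that a single analysis of $S_n/n$ suffices. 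Your route is marginally more economical (one application of the martingale argument rather than $d$), while the paper's route is slightly more self-contained in that it does not lean on Theorems~\ref{th1}--\ref{th2} for the $H_{nk}/n$ statement. The factorization concern you flag---that $\mathbb E(A_{ik}X_{ik}\mid\mathcal G_{i-1})=m_{ik}\,\mathbb E(X_{ik}\mid\mathcal G_{i-1})$ requires conditional independence of $\mathbf R_i$ and $\mathbf X_i$ given $\mathcal G_{i-1}$---is a genuine modeling assumption, but the paper uses it implicitly throughout (e.g.\ in the proof of Theorem~\ref{th1}), so it is not a gap particular to your argument.
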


	\subsection{The exact order of $Z_{nk}(k\in\{1,2,\dots,d\})$}
	For all $k\in\{t+1,\dots,d\}$, in Corollary \ref{co1}, we obtain that $H_{nk}/n\overset{a.s.}{\to}0$, and in Lemma \ref{leA3}, for all $0<\gamma_k<m_k/m_1$, $H^{-1}_{nk}=O_{a.s.}(n^{-\gamma_k})$. In the following, we consider the exact order of $H_{nk}$.
	\begin{theorem}\label{th3}
		Under Assumptions \ref{as1}--\ref{as3} and condition (\ref{eqn3}), for all $k\in\{1,2,\dots,d\}$, $j\in\{1,\dots,k-1,k+1,\dots,d\}$, there exists a random variable $\xi_{kj}$ on $(0,\infty)$ such that 
		$$\frac{H_{nk}}{H^{m_k/m_j}_{nj}}\overset{a.s.}{\to}\xi_{kj}.$$
	\end{theorem}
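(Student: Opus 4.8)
The plan is to reduce the statement to the almost-sure convergence of a single real-valued sequence by taking logarithms. Since Lemma~\ref{le1} guarantees $H_{nk}\overset{a.s.}{\to}\infty$, for all large $n$ every coordinate is positive and I may set
\[
D_n=\frac{1}{m_k}\log H_{nk}-\frac{1}{m_j}\log H_{nj}.
\]
Because $H_{nk}/H_{nj}^{m_k/m_j}=\exp(m_kD_n)$, it suffices to show that $D_n$ converges a.s.\ to a finite limit $D_\infty$; then $\xi_{kj}:=\exp(m_kD_\infty)$ automatically lies in $(0,\infty)$. Writing $u_{nk}=X_{n+1,k}A_{n+1,k}/H_{nk}\ge 0$, the renewal rule gives $H_{n+1,k}=H_{nk}(1+u_{nk})$, so the increment is $\Delta D_n=\tfrac1{m_k}\log(1+u_{nk})-\tfrac1{m_j}\log(1+u_{nj})$ and $D_n$ is the partial sum of these increments.

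The core of the argument is a drift/martingale decomposition of $\Delta D_n$ relative to $\{\mathcal F_n\}$. The elementary two-sided bound $u-\tfrac{u^2}{2}\le\log(1+u)\le u$, valid for every $u\ge 0$, lets me write $\log(1+u_{nk})=u_{nk}+r_{nk}$ with $-\tfrac{u_{nk}^2}{2}\le r_{nk}\le 0$; this avoids any splitting into ``small'' and ``large'' increments even though $A_{n+1,k}$ is unbounded. For the linear part I compute the compensator using $\mathbb E(X_{n+1,k}\mid\mathcal G_n)=N_{n+1}Z_{nk}$ together with the independence of $\mathbf R_{n+1}$ from the draw, obtaining
\[
\mathbb E(u_{nk}\mid\mathcal F_n)=\frac{m_{n+1,k}}{S_n}\,\mathbb E(N_{n+1}\mid\mathcal F_n),
\]
so the predictable part of $\Delta D_n$ coming from the linear terms equals $\bigl(\tfrac{m_{n+1,k}}{m_k}-\tfrac{m_{n+1,j}}{m_j}\bigr)\mathbb E(N_{n+1}\mid\mathcal F_n)/S_n$, whose coefficient is $(m_{n+1,k}m_j-m_{n+1,j}m_k)/(m_km_j)$.

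Next I would verify absolute summability of the compensator and square-summability of the martingale part, which together force $D_n$ to converge. For the compensator, Corollary~\ref{co1} gives $S_n\sim Nm_1n$ and condition~(\ref{eqn3}) gives $\mathbb E(N_{n+1}\mid\mathcal F_n)\to N$, so $\mathbb E(N_{n+1}\mid\mathcal F_n)/S_n=O_{a.s.}(1/n)$; combined with Assumption~\ref{as3} this makes $\sum_n\bigl(\tfrac{m_{n+1,k}}{m_k}-\tfrac{m_{n+1,j}}{m_j}\bigr)\mathbb E(N_{n+1}\mid\mathcal F_n)/S_n$ converge absolutely, and it vanishes identically when $k,j\le t$ by Assumption~\ref{as2}. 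For the quadratic remainder and the martingale fluctuations, the key estimate is the conditional second-moment bound
\[
\mathbb E(u_{nk}^2\mid\mathcal F_n)=\frac{\mathbb E(A_{n+1,k}^2)}{H_{nk}^2}\,\mathbb E(X_{n+1,k}^2\mid\mathcal F_n)\le\frac{C_1\,\mathbb E(A_{n+1,k}^2)}{H_{nk}\,S_n}\,\mathbb E(N_{n+1}\mid\mathcal F_n),
\]
where I used $X_{n+1,k}^2\le C_1X_{n+1,k}$ (since $X_{n+1,k}\le N_{n+1}\le C_1$) to extract the extra factor $Z_{nk}=H_{nk}/S_n$. By Assumption~\ref{as1} and Lemma~\ref{leA3} (which gives $H_{nk}^{-1}=O_{a.s.}(n^{-\gamma_k})$ for some $\gamma_k>0$), the right-hand side is $O_{a.s.}(n^{-1-\gamma_k})$, so $\sum_n\mathbb E(u_{nk}^2\mid\mathcal F_n)<\infty$ a.s. This simultaneously makes $\sum_n|r_{nk}|<\infty$ a.s.\ via $|r_{nk}|\le u_{nk}^2/2$ and the conditional Borel--Cantelli (L\'evy) lemma for nonnegative adapted summands, and bounds the conditional variance of the martingale increment $\tfrac1{m_k}(u_{nk}-\mathbb E(u_{nk}\mid\mathcal F_n))-\tfrac1{m_j}(u_{nj}-\mathbb E(u_{nj}\mid\mathcal F_n))$, so that the $L^2$ martingale convergence theorem applies. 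Assembling the three convergent pieces yields $D_n\to D_\infty$ finite a.s., and exponentiating proves the claim.

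I expect the main obstacle to be this second-order control, i.e.\ ensuring that $r_{nk}$ and the martingale variance remain summable for the colors with small $m_k$, whose counts grow only like $n^{\gamma_k}$ with $\gamma_k$ possibly well below $1$. A naive bound $X_{n+1,k}\le C_1$ would give only $\mathbb E(u_{nk}^2\mid\mathcal F_n)=O(n^{-2\gamma_k})$, which fails to be summable once $\gamma_k\le 1/2$; the improvement to $O(n^{-1-\gamma_k})$ obtained by extracting $Z_{nk}$ from $\mathbb E(X_{n+1,k}^2\mid\mathcal F_n)$ is what rescues the argument, while matching the linear drift exactly to the quantity controlled by Assumption~\ref{as3} is the other delicate point.
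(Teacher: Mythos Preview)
Your proposal is correct and is essentially the same proof as the paper's, only packaged differently: the paper sets $\xi_{n,kj}=\log(H_{nk}/H_{nj}^{m_k/m_j})=m_kD_n$, uses the same two-sided Taylor bound on the logarithmic increment, and then invokes Lemma~\ref{leB3} (Pemantle--Volkov) in place of your explicit drift/martingale decomposition. The key ingredients coincide: the linear compensator is exactly $(m_{n+1,k}m_j-m_{n+1,j}m_k)/(m_km_j)$ times a factor of order $1/n$, absorbed by Assumption~\ref{as3}, and the quadratic term is controlled via the same improvement $\mathbb E(X_{n+1,k}^2\mid\mathcal G_n)\le C_1N_{n+1}H_{nk}/S_n$ together with Lemma~\ref{leA3}, yielding the $O_{a.s.}(n^{-1-\gamma_k})$ bound you identified as decisive. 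Two cosmetic differences: the paper conditions on $\mathcal G_n$ (so $N_{n+1}$ is known rather than averaged) and uses the deterministic bound $S_n\ge n+1$ from~\eqref{eqqx5} instead of Corollary~\ref{co1}; neither changes the substance.
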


	In Theorem \ref{th3}, we establish a relationship concerning the order of two different types of balls in the urn as their quantities approach infinity. However, to precisely determine the order of balls for each type, it is essential to know the exact order of at least one type of ball. The subsequent corollary specifies the  order for the first $t$ types of balls.
	\begin{corollary}\label{co2}
		Under Assumptions \ref{as1}--\ref{as3} and condition (\ref{eqn3}), when $t=1$, then $Z_1$ equals to 1 almost surely, and when $t>1$, for all $k\in\{1,2,\dots,t\}$, we have ${\mathbb P}(Z_k\in(0,1))=1$.
	\end{corollary}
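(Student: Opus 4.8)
My plan is to treat the two cases separately, in both converting the relative-order statement of Theorem \ref{th3} into information about the individual limits by means of the normalization constraint of Theorem \ref{th2}.

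The case $t=1$ requires no real work: Theorem \ref{th2} gives $\sum_{k=1}^{t}Z_{nk}\overset{a.s.}{\to}1$, and for $t=1$ this is exactly $Z_{n1}\overset{a.s.}{\to}1$, so that $Z_1=1$ almost surely.

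For $t>1$ I would single out color $1$ as a reference and use that all colors $k\in\{1,\dots,t\}$ share the common mean, so that $m_k/m_1=1$. Theorem \ref{th3} then supplies, for each $k\in\{2,\dots,t\}$, a random variable $\xi_{k1}\in(0,\infty)$ with $H_{nk}/H_{n1}\overset{a.s.}{\to}\xi_{k1}$; writing $\xi_{11}:=1$ and using $Z_{nk}/Z_{n1}=H_{nk}/H_{n1}$ (these ratios being eventually well defined since $H_{n1}\overset{a.s.}{\to}\infty$ by Lemma \ref{le1}), I obtain $Z_{nk}/Z_{n1}\overset{a.s.}{\to}\xi_{k1}$ for every $k\in\{1,\dots,t\}$. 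Setting $\Xi:=\sum_{k=1}^{t}\xi_{k1}$, which is finite, strictly positive, and satisfies $\Xi=1+\sum_{k=2}^{t}\xi_{k1}>1$ because $t>1$ and each $\xi_{k1}>0$, I would combine this with $\sum_{k=1}^{t}Z_{nk}\overset{a.s.}{\to}1$ through the identity
$$Z_{n1}=\frac{\sum_{k=1}^{t}Z_{nk}}{\sum_{k=1}^{t}Z_{nk}/Z_{n1}},$$
whose numerator tends to $1$ and denominator to $\Xi$. This yields $Z_1=1/\Xi\in(0,1)$, and then $Z_k=\xi_{k1}Z_1=\xi_{k1}/\Xi$ for each $k\in\{1,\dots,t\}$, which lies in $(0,1)$ since $0<\xi_{k1}<\Xi$; hence $\mathbb P(Z_k\in(0,1))=1$.

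The one point that needs care is the passage from the relative orders of Theorem \ref{th3} to strict positivity of each $Z_k$: I cannot simply divide by $Z_{n1}$ and invoke $Z_1>0$, because positivity of $Z_1$ is itself part of the conclusion. The summation device above is designed precisely to extract $Z_1=1/\Xi$ directly from the normalization $\sum_k Z_{nk}\to1$, after which both positivity and the strict upper bound follow automatically from $\Xi>1$. An alternative route would feed Theorem \ref{th3} into the scaled limits $H_{nk}/n\overset{a.s.}{\to}Nm_1Z_k$ of Corollary \ref{co1}, but it would still have to first locate, on each sample path, a color $j\le t$ with $Z_j>0$ (guaranteed by $\sum_{k}Z_k=1$) and bootstrap from there; the summation argument avoids this sample-path bookkeeping.
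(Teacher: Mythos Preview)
Your argument is correct and matches the paper's approach: both deduce from Theorem~\ref{th3} that the ratios $H_{nk}/H_{nj}$ converge to finite positive limits for $k,j\le t$, and combine this with the normalization of Theorem~\ref{th2} to pin down each $Z_k$. The only cosmetic difference is that the paper expands $1/Z_{nk}=\sum_{j=1}^{d}H_{nj}/H_{nk}$ (so it also checks that the $j>t$ summands vanish) and then rules out $Z_k=1$ in a separate step, whereas your single-reference identity $Z_{n1}=\bigl(\sum_{k\le t}Z_{nk}\bigr)\big/\bigl(\sum_{k\le t}H_{nk}/H_{n1}\bigr)$ delivers $Z_k=\xi_{k1}/\Xi\in(0,1)$ in one stroke.
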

	
	In the following, for all $k\in\{t+1,\dots,d\}$, we show the exact order of $Z_{nk}$ and $H_{nk}$. In addition, we consider the exact order of balls of each color drawn, i.e., $N_{A_k,n}$(recall the definition in Section \ref{sec2}). In \cite{r10}, for a two-color RRU model with one ball randomly drawn at each stage, May and Flournoy show that the limit of balls of each color drawn is related to the power of the test problem $H_0:m_1=m_2$ versus $H_1:m_1>m_2$.
	
	\begin{theorem}\label{th6}
		Under Assumptions \ref{as1}--\ref{as3} and condition (\ref{eqn3}), we have that 
		\begin{enumerate}[label=(\roman*)]
			\item  For all $j\in\{t+1,\dots,d\}$, there exists a random variable $\tilde{Z}_j$ such that
			$$n^{1-m_j/m_1}Z_{nj}\overset{a.s.}{\to}\tilde{Z}_j,$$
			where $\tilde{Z}_j\overset{a.s.}{=}\frac{\xi_{j1}Z^{m_j/m_1}_1}{\left(Nm_1\right)^{1-m_j/m_1}}$, and $\mathbb P(\tilde{Z}_j\in(0,+\infty))=1$.
			\item For all $j\in\{t+1,\dots,d\}$, we have
			$$\frac{H_{nj}}{n^{m_j/m_1}}\overset{a.s.}{\to}(Nm_1Z_1)^{m_j/m_1}\xi_{j1}.$$
			\item For all $k\in\{1,2,\dots,d\}$, we have
			\begin{eqnarray}\label{eqn10}
				\frac{N_{A_k,n}}{n^{m_k/m_1}}=\frac{1}{n^{m_k/m_1}}\sum\limits_{j=1}^{n}X_{jk}\overset{a.s.}{\to}\begin{cases}
					NZ_k, &\text{$k\in\{1,\dots,t\}$},\\
					\frac{m_1}{m_k}N\tilde{Z}_k, &\text{$k\in\{t+1,\dots,d\}$}.
				\end{cases}
			\end{eqnarray}
		\end{enumerate}
	\end{theorem}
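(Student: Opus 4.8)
The plan is to establish (ii) first, since it follows almost immediately from the earlier results, then deduce (i), and finally treat the drawing counts in (iii), which is the substantive part. For (ii), I would write $H_{nj}/n^{m_j/m_1}=\bigl(H_{nj}/H_{n1}^{m_j/m_1}\bigr)\cdot(H_{n1}/n)^{m_j/m_1}$ and invoke Theorem \ref{th3} with the pair $(j,1)$ to get $H_{nj}/H_{n1}^{m_j/m_1}\overset{a.s.}{\to}\xi_{j1}$, together with Corollary \ref{co1} giving $H_{n1}/n\overset{a.s.}{\to}Nm_1Z_1$ (here $1\le t$, so $m_1$ is the common maximal mean). This yields the stated limit $(Nm_1Z_1)^{m_j/m_1}\xi_{j1}$. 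For (i), I would write $n^{1-m_j/m_1}Z_{nj}=\bigl(H_{nj}/n^{m_j/m_1}\bigr)/(S_n/n)$ and combine (ii) with $S_n/n\overset{a.s.}{\to}Nm_1$ from Corollary \ref{co1}; simplifying the exponents gives $\tilde Z_j=\xi_{j1}Z_1^{m_j/m_1}/(Nm_1)^{1-m_j/m_1}$. Positivity $\mathbb P(\tilde Z_j\in(0,\infty))=1$ follows because $\xi_{j1}>0$ a.s. by Theorem \ref{th3} and $Z_1>0$ a.s. by Corollary \ref{co2} (with $Z_1=1$ when $t=1$).

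For (iii) I would avoid passing through $H_{nk}$ and instead analyze $N_{A_k,n}=\sum_{i=1}^n X_{ik}$ directly through its Doob decomposition relative to $\{\mathcal F_n\}$. Using the hypergeometric conditional mean $\mathbb E(X_{ik}\mid\mathcal G_{i-1})=N_iZ_{i-1,k}$ and conditioning down to $\mathcal F_{i-1}$, the compensator is $\tilde N_n:=\sum_{i=1}^n\mathbb E(X_{ik}\mid\mathcal F_{i-1})=\sum_{i=1}^n\mathbb E(N_i\mid\mathcal F_{i-1})\,Z_{i-1,k}$, while $L_n:=N_{A_k,n}-\tilde N_n$ is a martingale. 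The leading behaviour comes from $\tilde N_n$: since $\mathbb E(N_i\mid\mathcal F_{i-1})\overset{a.s.}{\to}N$ by (\ref{eqn3}), I would apply a Toeplitz/Cesàro argument. For $k\le t$ one has $Z_{i-1,k}\overset{a.s.}{\to}Z_k$ (Theorem \ref{th1}), giving $\tilde N_n/n\overset{a.s.}{\to}NZ_k$; for $k>t$, part (i) gives $Z_{i-1,k}\sim\tilde Z_k\,i^{m_k/m_1-1}$, so with $\sum_{i\le n}i^{m_k/m_1-1}\sim(m_1/m_k)n^{m_k/m_1}$ one obtains $\tilde N_n/n^{m_k/m_1}\overset{a.s.}{\to}(m_1/m_k)N\tilde Z_k$.

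It remains to show the martingale remainder is negligible, $L_n=o(n^{m_k/m_1})$ a.s.; this is the step I expect to be the main obstacle, because for $k>t$ the exponent $m_k/m_1$ can be arbitrarily small, so a crude $O(\sqrt n)$ bound is useless and one must exploit that $Z_{i,k}\to0$ to get a sharp quadratic-variation estimate. The key is the conditional-variance bound $\mathbb E((\Delta L_i)^2\mid\mathcal F_{i-1})\le\mathbb E(X_{ik}^2\mid\mathcal F_{i-1})\le C_1\,\mathbb E(X_{ik}\mid\mathcal F_{i-1})\le C_1^2 Z_{i-1,k}$, where I used $X_{ik}\le N_i\le C_1$ so that $X_{ik}^2\le C_1X_{ik}$. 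With $b_n=n^{m_k/m_1}$ and the rate from (i), $\sum_i\mathbb E((\Delta L_i)^2\mid\mathcal F_{i-1})/b_i^2\lesssim\sum_i i^{m_k/m_1-1}/i^{2m_k/m_1}=\sum_i i^{-m_k/m_1-1}<\infty$, so the strong law for martingales (the Kronecker-lemma form) gives $L_n/n^{m_k/m_1}\overset{a.s.}{\to}0$; for $k\le t$ the same criterion with $b_n=n$ and $Z_{i,k}\le1$ gives $L_n/n\overset{a.s.}{\to}0$. Adding compensator and remainder yields (iii). As a cross-check, the identity $H_{nk}-H_{0k}=\sum_i X_{ik}A_{ik}=m_kN_{A_k,n}+\sum_i X_{ik}(A_{ik}-m_{ik})+\sum_i X_{ik}(m_{ik}-m_k)$, combined with (ii) and the same martingale estimate on the centered reinforcements, reproduces the same constant $\tfrac{m_1}{m_k}N\tilde Z_k=\tfrac1{m_k}(Nm_1Z_1)^{m_k/m_1}\xi_{k1}$.
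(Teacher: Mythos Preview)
Your proposal is correct and follows essentially the same route as the paper. For (i)--(ii) the paper uses the same factorizations through $H_{nj}/H_{n1}^{m_j/m_1}\to\xi_{j1}$, $H_{n1}/n\to Nm_1Z_1$ and $S_n/n\to Nm_1$ (in the reverse order, proving (i) first); for (iii) the paper invokes its Lemma~\ref{leB1}, whose proof is exactly the Doob decomposition plus Toeplitz/Kronecker argument you write out by hand, and it controls the conditional second moment via the explicit hypergeometric formula rather than your simpler but equally sufficient bound $X_{ik}^2\le C_1X_{ik}$.
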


	\section{Parameter Estimation}\label{sec4}
	
	In the previous discussion (Section \ref{sec3}), we detailed the specific order and asymptotic limits of each color type of balls within the urn model. However, the actual distributional forms of these asymptotic limits were not determined. This omission precludes the direct application of these results to hypothesis testing for evaluating the expected payoffs of the arms in a multi-armed bandits scenario. Therefore, to adequately equip our analysis for addressing such hypothesis tests, it is essential first to establish and validate the estimators for the unknown parameters within our model, as well as to assess the convergence rates of these estimators.
	
	This section presents the development and evaluation of these estimators. The focus is to provide robust statistical tools that can help infer the parameters governing the dynamics of the urn, which are critical for comprehending the long-term behavior of the system and for testing hypotheses related to the operational efficacy of multi-armed bandits algorithms. This work is essential for extending our theoretical results to practical applications in statistical decision-making and optimization in stochastic environments.

	\subsection{Estimators of the parameters} In this section, we shows the results of the estimators of $N$, $m_k(k\in\{1,2,\dots,d\})$ and $Z_k(k\in\{1,\dots,d\})$. Further, we obtain the joint convergence rate of the estimators of $m_k$.
	\begin{theorem}\label{th4}
		Assume that Assumptions \ref{as1}--\ref{as3} and condition (\ref{eqn3}) hold. Further, assume that
		\begin{equation}\label{eqn8}
			\mathbb{E}(N^{2}_{n+1}|\mathcal{F}_n)\to Q,\ \ a.s..
		\end{equation}
		Then, we have
		\begin{equation*}
			\hat{\mu}_{n}=\frac{\sum\limits_{j=1}^{n}N_j}{n}\overset{a.s.}{\to}N,\ \ \hat{q}_{N,n}=\frac{\sum\limits_{j=1}^{n}N^{2}_j}{n}\overset{a.s.}{\to}Q,\ \ \hat\nu_{nk}=\frac{1}{n}\sum\limits_{j=1}^n\frac{X_{jk}}{N_j}\overset{a.s.}{\to}Z_k(k\in\{1,2,\dots,d\}).
		\end{equation*}
		And, the random variable 
		\begin{equation*}
			\hat{m}_{A_k,n}=\frac{\sum\limits_{j=1}^nA_{jk}X_{jk}}{N_{A_k,n}}
		\end{equation*}
		is a strongly consistent estimator of $m_k(k\in\{1,2,\dots,d\})$. Further, assume that 
		\begin{equation}\label{eqn9}
			q_{A_k,n}=\mathbb{E}(A^2_{nk})\to q_k,\ \ q_{A_kA_s,n}=\mathbb E(A_{nk}A_{ns})\to q_{ks},
		\end{equation}
		then for $k\in\{1,2,\dots,d\}$, we have
		$$\hat{q}_{A_k,n}=\frac{\sum\limits_{j=1}^{n}A^2_{jk}X_{jk}}{N_{A_k,n}}\overset{a.s.}{\to}q_k;$$
		for $k,s$ such that $m_k+m_s>m_1$, on the set $\{Q>N\}=\{N>1\}$,
		$$ \hat{q}_{A_kA_s,n}=\frac{\sum_{j=1}^{n}A_{jk}A_{js}X_{jk}X_{js}}{\sum_{j=1}^{n}X_{jk}X_{js}}\overset{a.s.}{\to}q_{ks};$$
		and for $k,s$ such that $m_k+m_s\le m_1$,
		$$\tilde{q}_{A_kA_s,n}=\frac{\sum_{j=1}^nA_{jk}A_{js}X_{jk}}{N_{A_k,n}}\overset{a.s.}{\to}q_{ks}.$$
	\end{theorem}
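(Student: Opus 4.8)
The plan is to reduce every claimed convergence to a single scheme: write each estimator as a ratio (or Cesàro average) whose numerator, after centering, is a martingale with respect to $\{\mathcal{G}_n\}$, and whose denominator is a nonnegative, adapted sum that tends to infinity. The two ingredients I would isolate as an auxiliary lemma are (i) a strong law for martingales normalized by a (possibly random) increasing sequence, valid under an $L^{3/2}$ conditional-moment bound — this is exactly where Assumption \ref{as1} (third moments) is consumed, as flagged in the Remark and presumably housed in Lemma \ref{leA2}; and (ii) a weighted Toeplitz lemma stating that if $c_j\to c$ and $W_j\ge 0$ with $\sum_{j\le n}W_j\to\infty$, then $(\sum_{j\le n}c_jW_j)/(\sum_{j\le n}W_j)\to c$. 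With these in hand each assertion becomes a short verification of moment bounds and of divergence of the denominator.

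For the first three estimators I would argue directly. Since the support of $N_{n+1}$ lies in $\{1,\dots,C_1\}$, the variables $N_j$, $N_j^2$ and $X_{jk}/N_j\in[0,1]$ are uniformly bounded, so $N_j-\mathbb E(N_j\mid\mathcal F_{j-1})$, $N_j^2-\mathbb E(N_j^2\mid\mathcal F_{j-1})$ and $X_{jk}/N_j-\mathbb E(X_{jk}/N_j\mid\mathcal G_{j-1})$ are bounded martingale differences, and the martingale SLLN makes their Cesàro sums vanish. It then remains to pass to the limit in the compensators: $\tfrac1n\sum_{j\le n}\mathbb E(N_j\mid\mathcal F_{j-1})\to N$ and $\tfrac1n\sum_{j\le n}\mathbb E(N_j^2\mid\mathcal F_{j-1})\to Q$ follow from conditions \eqref{eqn3} and \eqref{eqn8} by ordinary Cesàro convergence, while $\mathbb E(X_{jk}/N_j\mid\mathcal G_{j-1})=Z_{j-1,k}\to Z_k$ by Theorems \ref{th1}--\ref{th2}, again Cesàro-averaged. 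This delivers $\hat\mu_n\to N$, $\hat q_{N,n}\to Q$ and $\hat\nu_{nk}\to Z_k$.

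For $\hat m_{A_k,n}$, $\hat q_{A_k,n}$, $\hat q_{A_kA_s,n}$ and $\tilde q_{A_kA_s,n}$ I would use the structural fact that $\mathbf R_{j}$ is independent of $\mathcal G_{j-1}$ and of the draw $\mathbf X_j$, so that for any $Y_j\in\{A_{jk},A_{jk}^2,A_{jk}A_{js}\}$ and any weight $W_j\in\{X_{jk},X_{jk}X_{js}\}$ one has $\mathbb E(Y_jW_j\mid\mathcal G_{j-1})=\mathbb E(Y_j)\,\mathbb E(W_j\mid\mathcal G_{j-1})$. Writing $c_j=\mathbb E(Y_j)$, which tends to $m_k$, $q_k$ or $q_{ks}$ by Assumption \ref{as2} and \eqref{eqn9}, the estimator equals $c+\frac{\sum(Y_j-c_j)W_j}{\sum W_j}+\frac{\sum(c_j-c)W_j}{\sum W_j}$; the last term vanishes by the weighted Toeplitz lemma and the middle numerator is a martingale. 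To control it I would bound the predictable $L^{3/2}$-variation using $W_j\le C_1^2$ and $W_j^{3/2}\le C\,W_j$, together with $\mathbb E|Y_j|^{3/2}\le C_2$ (directly for $A_{jk}^2$ since $\mathbb E A_{jk}^3<C_2$, and by Cauchy--Schwarz for $A_{jk}A_{js}$), giving $\sum_{j\le n}\mathbb E(|Y_j-c_j|^{3/2}\mid\mathcal G_{j-1})\,\mathbb E(W_j^{3/2}\mid\mathcal G_{j-1})\le C\sum_{j\le n}\mathbb E(W_j\mid\mathcal G_{j-1})$. Since the compensator $\sum\mathbb E(W_j\mid\mathcal G_{j-1})$ is asymptotically equivalent to $\sum W_j$ (another bounded-increment martingale SLLN, using $W_j^2\le C_1^2 W_j$), the $L^{3/2}$ martingale law forces $\sum(Y_j-c_j)W_j=o(\sum W_j)$, which yields the convergence.

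The decisive point, and where I expect the real work to lie, is the behavior of the denominators in the cross-moment estimators. For multi-hypergeometric draws $\mathbb E(X_{jk}X_{js}\mid\mathcal G_{j-1})=N_j(N_j-1)\frac{H_{j-1,k}H_{j-1,s}}{S_{j-1}(S_{j-1}-1)}\asymp N_j(N_j-1)Z_{j-1,k}Z_{j-1,s}$, so by the exact orders $Z_{nk}\asymp n^{m_k/m_1-1}$ from Theorem \ref{th6} the compensator of $\sum X_{jk}X_{js}$ grows like $\sum_j j^{(m_k+m_s)/m_1-2}$, which diverges polynomially precisely when $m_k+m_s>m_1$; moreover $N_j(N_j-1)$ has nonvanishing conditional mean only when more than one ball is drawn, which is exactly the content of $\{Q>N\}=\{N>1\}$ (since $Q-N=\lim\mathbb E(N_{n+1}(N_{n+1}-1)\mid\mathcal F_n)$ and $N_{n+1}\ge1$). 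Hence on $\{N>1\}$ the denominator of $\hat q_{A_kA_s,n}$ tends to infinity and the scheme applies. In the complementary regime $m_k+m_s\le m_1$ this denominator may stay bounded, so I would instead observe that $\tilde q_{A_kA_s,n}$ weights $A_{jk}A_{js}$ by $X_{jk}$ alone; as $A_{jk}A_{js}$ is independent of the draw its conditional mean is still $q_{A_kA_s,j}$, while $N_{A_k,n}=\sum X_{jk}$ always diverges (of order $n$ or $n^{m_k/m_1}$ by Corollary \ref{co1} and Theorem \ref{th6}), so the same martingale-plus-Toeplitz argument gives $\tilde q_{A_kA_s,n}\to q_{ks}$. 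The hardest step is thus the sharp control of $\sum X_{jk}X_{js}$ through the exact orders and its compensator; the limited-moment ($L^{3/2}$) martingale law is the second delicate ingredient, forced on us because only third moments of $A_{nk}$, hence only $L^{3/2}$ control of $A_{nk}^2$ and $A_{nk}A_{ns}$, are available.
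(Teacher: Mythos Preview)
Your proposal is correct and follows essentially the same route as the paper: both arguments combine a martingale strong law (the paper's Lemma~\ref{leB1}, which is exactly your ``martingale SLLN $+$ Toeplitz'' package) with the exact growth orders from Theorem~\ref{th6} to control the cross--term denominator $\sum_j X_{jk}X_{js}$, and both consume the third--moment bound of Assumption~\ref{as1} at the point where $A_{jk}^2$ or $A_{jk}A_{js}$ enters (your $L^{3/2}$ exponent matches the paper's ``$2+\epsilon$ moment'' remark). The only organizational difference is that the paper normalizes numerator and denominator separately by the deterministic sequence $n^{m_k/m_1}$ (resp.\ $n^{(m_k+m_s-m_1)/m_1}$) and computes each limit explicitly via Lemma~\ref{leB1}, whereas you use the random denominator $\sum_j W_j$ itself as the normalizer and only need its divergence; this is a minor repackaging, not a different idea.
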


	In the following, we show the joint asymptotic normality of the estimators $\hat m_{A_k,n}$.
	\begin{theorem}\label{thm5}
		Assume that Assumptions \ref{as1}-\ref{as4} and the conditions (\ref{eqn3}), (\ref{eqn8}) and (\ref{eqn9}) holds. Define
		$$\sigma^{2}_k=q_k-m^{2}_k\ \ and\ \  c_{ij}=q_{k_i,k_j}-m_{k_i}m_{k_j}.$$		
		Then we have
		$$\left(\sqrt{N_{A_1,n}}(\hat{m}_{A_1,n}-m_1),\dots,\sqrt{N_{A_d,n}}(\hat{m}_{A_d,n}-m_d)\right)^{\top}\overset{stable}\to \mathcal{N}(0,\Sigma),$$
		where
		\begin{equation*}
			\Sigma=    \footnotesize{
				\begin{pmatrix}
					\sigma^{2}_1[Z_1(Q/N-1)+1]&\dots&c_{1t}\sqrt{Z_1Z_t}(Q/N-1)&0&\dots&0\\
					\vdots&\ddots&\vdots&\vdots&\ddots&\vdots\\
					c_{1t}\sqrt{Z_1Z_t}(Q/N-1)&\dots&\sigma^{2}_t[Z_t(Q/N-1)+1]&0&\dots&0\\
					0&\dots&0&\sigma^{2}_{t+1}&\dots&0\\
					\vdots&\ddots&\vdots&\vdots&\ddots&\vdots\\
					0&\dots&0&0&\dots&\sigma^{2}_{d}
			\end{pmatrix}}
		\end{equation*}
		On the set $\{Q=N\}=\{N=1\}$, $\Sigma$ is a diagonal matrix and the diagonal elements are the asymptotic variances of $A_{nk}(k\in\{1,2,\dots,d\})$.
	\end{theorem}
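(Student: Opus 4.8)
The plan is to reduce the statement to a multivariate stable central limit theorem for a vector martingale, after isolating the randomly-centered fluctuation and controlling a deterministic bias. First I would write
$$\sqrt{N_{A_k,n}}(\hat m_{A_k,n}-m_k)=\frac{1}{\sqrt{N_{A_k,n}}}\sum_{j=1}^n (A_{jk}-m_k)X_{jk}$$
and split the numerator as $\sum_j (A_{jk}-m_{jk})X_{jk} + \sum_j (m_{jk}-m_k)X_{jk}=:M_{n,k}+B_{n,k}$. Because the replacement entries $A_{jk}$ are independent of $\mathcal G_{j-1}$ and of the draw $\mathbf X_j$, with conditional mean $m_{jk}$, each $M_{n,k}$ is an $\mathcal F_n$-martingale. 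For the bias $B_{n,k}$ I would use Assumption \ref{as4}, $|m_{jk}-m_k|=o(j^{-m_k/2m_1})$, together with the exact order $N_{A_k,n}\asymp n^{m_k/m_1}$ from Theorem \ref{th6}; an Abel-summation estimate against the known growth of $N_{A_k,n}$ then gives $B_{n,k}/\sqrt{N_{A_k,n}}\to 0$ a.s., so the limit law is governed by $M_{n,k}/\sqrt{N_{A_k,n}}$ alone.

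Second, I would compute the predictable quadratic covariation. Since $A_{jk}$ is independent of $\mathcal H_j:=\sigma(\mathcal G_{j-1},\mathbf X_j)$,
$$\langle M_k,M_s\rangle_n=\sum_{j=1}^n \mathrm{Cov}(A_{jk},A_{js})\,\mathbb E[X_{jk}X_{js}\mid\mathcal F_{j-1}],$$
where $\mathrm{Cov}(A_{jk},A_{js})\to \sigma_k^2$ (if $k=s$) or $c_{ks}$ (if $k\ne s$) by (\ref{eqn9}). The conditional moments of the draw come from the multivariate hypergeometric law: $\mathbb E[X_{jk}^2\mid\mathcal G_{j-1}]=N_jZ_{j-1,k}(1-Z_{j-1,k})\tfrac{S_{j-1}-N_j}{S_{j-1}-1}+N_j^2Z_{j-1,k}^2$, with the analogous product formula for $k\ne s$. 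Passing to the limit via $Z_{j-1,k}\to Z_k$ (Theorems \ref{th1}--\ref{th2}), $\tfrac{S_{j-1}-N_j}{S_{j-1}-1}\to1$, $\mathbb E[N_j\mid\mathcal F_{j-1}]\to N$ and $\mathbb E[N_j^2\mid\mathcal F_{j-1}]\to Q$ yields, for $k,s\le t$, $\mathbb E[X_{jk}^2\mid\mathcal F_{j-1}]\to NZ_k[Z_k(Q/N-1)+1]$ and $\mathbb E[X_{jk}X_{js}\mid\mathcal F_{j-1}]\to NZ_kZ_s(Q/N-1)$.

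Third -- and this is where the block structure of $\Sigma$ emerges -- I would divide by the correct random scale $\sqrt{N_{A_k,n}N_{A_s,n}}$ and use the exact orders of Theorem \ref{th6}: $N_{A_k,n}\sim NZ_k\,n$ for $k\le t$ and $N_{A_k,n}\sim \tfrac{m_1}{m_k}N\tilde Z_k\,n^{m_k/m_1}$ for $k>t$. For $k,s\le t$ this produces exactly $\sigma_k^2[Z_k(Q/N-1)+1]$ on the diagonal and $c_{ks}\sqrt{Z_kZ_s}(Q/N-1)$ off it; for $k>t$ the term $Z_{j-1,k}^2$ is negligible against the linear term, so $\langle M_k\rangle_n/N_{A_k,n}\to\sigma_k^2$. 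All mixed ($k\le t<s$) and distinct small-index ($t<k\ne s$) covariations vanish after normalization: comparing $\sum_j Z_{j-1,k}Z_{j-1,s}\asymp n^{(m_k+m_s)/m_1-1}$ (or $\log n$/$O(1)$ when $m_k+m_s\le m_1$) against $\sqrt{N_{A_k,n}N_{A_s,n}}\asymp n^{(m_k+m_s)/2m_1}$ shows the ratio is $n^{\theta}$ with $\theta<0$ since $m_k,m_s<m_1$. This rate bookkeeping is precisely what forces the zero off-diagonal blocks.

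Finally, I would invoke a multivariate stable martingale central limit theorem for the triangular array $\{\mathbf B_n^{-1}\mathbf D_j\}$ with $\mathbf B_n=\mathrm{diag}(\sqrt{N_{A_1,n}},\dots,\sqrt{N_{A_d,n}})$ and increment vector $\mathbf D_j=((A_{jk}-m_{jk})X_{jk})_k$. The convergence $\mathbf B_n^{-1}\langle\mathbf M\rangle_n\mathbf B_n^{-1}\to\Sigma$ is Step~3, and the conditional Lindeberg requirement follows from a Lyapunov bound: by Assumption \ref{as1} we have $\sup_j\mathbb E|A_{jk}-m_{jk}|^3<\infty$, and since $X_{jk}\le N_j\le C_1$,
$$N_{A_k,n}^{-3/2}\sum_{j=1}^n \mathbb E[|A_{jk}-m_{jk}|^3X_{jk}^3\mid\mathcal F_{j-1}]\le C\,N_{A_k,n}^{-1/2}\to0.$$
Stable (rather than ordinary) convergence is essential because $\Sigma$ depends on the a.s. limits $Z_k,\tilde Z_k$, so the limit is a mixed Gaussian; the special case $\{Q=N\}=\{N=1\}$ then drops out by setting $Q/N-1=0$. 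I expect the main obstacle to be Step~3: reconciling the heterogeneous random normalizations $N_{A_k,n}$ across the two index groups and justifying a single joint stable CLT despite them. This I would handle by replacing each $N_{A_k,n}$ with its deterministic order $n^{m_k/m_1}$, absorbing the random constants $NZ_k$ and $\tfrac{m_1}{m_k}N\tilde Z_k$ into the mixing limit, and checking that the predictable covariation matrix converges in this common scale.
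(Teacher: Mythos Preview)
Your proposal is correct and follows essentially the same route as the paper: both split $\sqrt{N_{A_k,n}}(\hat m_{A_k,n}-m_k)$ into a martingale piece $\sum_j(A_{jk}-m_{jk})X_{jk}$ plus a bias $\sum_j(m_{jk}-m_k)X_{jk}$, kill the bias via Assumption~\ref{as4} and the order $N_{A_k,n}\asymp n^{m_k/m_1}$, apply a stable martingale CLT with the \emph{deterministic} normalization $n^{m_k/m_1}$ (the paper does this from the outset by setting $\mathbf Y_{ni}=(X_{ik}(A_{ik}-m_{ik})/\sqrt{n^{m_k/m_1}})_k$, which is exactly what you suggest in your last paragraph), and finally convert back to the random scale via Theorem~\ref{th6}. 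The only cosmetic difference is that the paper verifies the conditional Lindeberg condition directly, whereas you propose the cleaner Lyapunov route using $\sup_j\mathbb E|A_{jk}|^3<\infty$ and $X_{jk}\le C_1$; both are valid under Assumption~\ref{as1}.
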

	\begin{remark}
		We note that when $N=1$ and $d=2$, The joint asymptotic covariance matrix of the estimators of the reinforced expectations is consistent with the form of Theorem 4.1 of \cite{r10}.
	\end{remark}
	\begin{remark}
	From Theorem \ref{thm5}, we can infer that the joint asymptotic covariance matrices of $\hat{m}{A_1,n}, \hat{m}{A_2,n}, \dots, \hat{m}_{A_d,n}$ demonstrate a block structure. The dimensions preceding $t$ and those following $t+1$ are asymptotically independent, and the dimensions following $t+1$ are also asymptotically independent among themselves.
	\end{remark}

	\begin{corollary} \label{co4}
		Assume that Assumptions \ref{as1}-\ref{as4} and the conditions (\ref{eqn3}), (\ref{eqn8}) and (\ref{eqn9}) hold. We have that for all $k\in\{1,2,\dots,d-1\}$,
		\begin{equation}\label{eqn36}
			\frac{1}{\sqrt{T^{k,k+1}_n}}\frac{\hat{m}_{A_k,n}-\hat{m}_{A_{k+1},n}-(m_k-m_{k+1})}{\sqrt{\sigma^2_k/N_{A_k,n}+\sigma^2_{k+1}/N_{A_{k+1}}}}\overset{L}{\to}{\cal N}(0,1),		
		\end{equation}
		where
		\begin{eqnarray*}
			T^{k,k+1}_n=\begin{cases}
				T^{k,k+1}_{n,0},\ &\text{$k+1\le t,$}\\
				T^{k,k+1}_{n,1}=1,\ &\text{$k\ge t.$}
			\end{cases}
		\end{eqnarray*}
		and 
		\begin{align*}
			&T^{k,k+1}_{n,0}=\frac{1}{\sigma^2_kN_{A_{k+1},n}+\sigma^2_{k+1}N_{A_k,n}}\Big\{\sigma^2_k[Z_k(Q/N-1)+1]N_{A_{k+1},n}\Big.\\
			&\Big.-2c_{k,k+1}\sqrt{Z_kZ_{k+1}}(Q/N-1)\sqrt{N_{A_k,n}N_{A_{k+1},n}}+\sigma^2_{k+1}[Z_{k+1}(Q/N-1)+1]N_{A_k,n}\Big\}.
		\end{align*}
		And, the convergence of (\ref{eqn36}) still holds if $N$, $Q$, $Z_k(k=1,\dots,t)$, $c_{pq}(1\le p<q\le t)$  and $\sigma^2_k(k=1,\dots,d)$ are replaced by their strongly consistency estimators $\hat{\mu}_n$, $\hat{q}_{N,n}$, $\hat\nu_{nk}$, $\hat{q}_{A_pA_q,n}-\hat{m}_{A_p}\hat{m}_{A_q}$ and $\hat{q}_{A_k,n}-\hat{m}^2_{A_k,n}$, respectively.
	\end{corollary}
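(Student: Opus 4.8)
The plan is to deduce the corollary from the joint stable central limit theorem of Theorem~\ref{thm5} by writing the centred difference as a linear combination of two coordinates and then \emph{self-normalizing}, so that the a priori random limiting variance collapses to the deterministic constant $1$; this collapse is exactly why the limit in (\ref{eqn36}) is an ordinary $\mathcal{N}(0,1)$ rather than a mixed normal. Write $U_{k,n}=\sqrt{N_{A_k,n}}\,(\hat m_{A_k,n}-m_k)$, so that Theorem~\ref{thm5} reads $(U_{1,n},\dots,U_{d,n})^{\top}\overset{stable}\to\mathcal N(0,\Sigma)$. Setting $\Delta_n=\hat m_{A_k,n}-\hat m_{A_{k+1},n}-(m_k-m_{k+1})$ and $R_n=\sqrt{\sigma^2_k/N_{A_k,n}+\sigma^2_{k+1}/N_{A_{k+1},n}}$, we have
$$\Delta_n=\frac{U_{k,n}}{\sqrt{N_{A_k,n}}}-\frac{U_{k+1,n}}{\sqrt{N_{A_{k+1},n}}},$$
so the left-hand side of (\ref{eqn36}) equals $\tilde a_n U_{k,n}+\tilde b_n U_{k+1,n}$ with $\tilde a_n=(N_{A_k,n}^{-1/2})/(\sqrt{T^{k,k+1}_n}\,R_n)$ and $\tilde b_n=-(N_{A_{k+1},n}^{-1/2})/(\sqrt{T^{k,k+1}_n}\,R_n)$. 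The central point is that $T^{k,k+1}_n$ is defined precisely so that $T^{k,k+1}_n\,R_n^2$ equals the asymptotic variance of $\Delta_n$ read off from $\Sigma$, namely $\Sigma_{kk}/N_{A_k,n}+\Sigma_{k+1,k+1}/N_{A_{k+1},n}-2\Sigma_{k,k+1}/\sqrt{N_{A_k,n}N_{A_{k+1},n}}$.

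First I would verify this variance identity case by case. When $k+1\le t$, both coordinates lie in the first diagonal block, and $\Sigma_{kk}=\sigma^2_k[Z_k(Q/N-1)+1]$, $\Sigma_{k+1,k+1}=\sigma^2_{k+1}[Z_{k+1}(Q/N-1)+1]$, $\Sigma_{k,k+1}=c_{k,k+1}\sqrt{Z_kZ_{k+1}}(Q/N-1)$; multiplying the asserted expression for $T^{k,k+1}_{n,0}$ through by $N_{A_k,n}N_{A_{k+1},n}$ gives exactly $T^{k,k+1}_{n,0}\,R_n^2=\Sigma_{kk}/N_{A_k,n}+\Sigma_{k+1,k+1}/N_{A_{k+1},n}-2\Sigma_{k,k+1}/\sqrt{N_{A_k,n}N_{A_{k+1},n}}$, so the self-normalized variance is identically $1$. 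When $k\ge t$ the off-block covariance vanishes, $\Sigma_{k,k+1}=0$, and I distinguish two regimes: if $k>t$ with $m_k=m_{k+1}$ then $N_{A_k,n}\asymp N_{A_{k+1},n}$ and $\Sigma_{kk}=\sigma^2_k$, $\Sigma_{k+1,k+1}=\sigma^2_{k+1}$, so the variance equals $R_n^2$ exactly and $T^{k,k+1}_n=1$; if instead $m_k>m_{k+1}$ (which includes the straddling case $k=t$, where $\Sigma_{kk}=\sigma^2_t[Z_t(Q/N-1)+1]\neq\sigma^2_t$), then by Theorem~\ref{th6}(iii) $N_{A_{k+1},n}=o(N_{A_k,n})$, so the term $\sigma^2_{k+1}/N_{A_{k+1},n}$ dominates both $R_n^2$ and the variance while the $\Sigma_{kk}/N_{A_k,n}$ contribution is asymptotically negligible, whence the ratio of variance to $R_n^2$ tends to $1$ and $T^{k,k+1}_n=1$ is again the correct normalizer.

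Next I would upgrade these variance computations to a distributional statement. The coefficients $\tilde a_n,\tilde b_n$ are continuous functions of the ratios $N_{A_k,n}/n^{m_k/m_1}$ and $N_{A_{k+1},n}/n^{m_{k+1}/m_1}$, which converge a.s. to strictly positive limits by Theorem~\ref{th6}(iii); hence $(\tilde a_n,\tilde b_n)$ converges a.s. (to $0$ in the coordinate whose scale is subdominant, in the unequal-mean regimes). Invoking a Slutsky-type theorem for stable convergence, the a.s.\ convergence of the multipliers together with $(U_{k,n},U_{k+1,n})\overset{stable}\to(G_k,G_{k+1})$ yields that $\tilde a_n U_{k,n}+\tilde b_n U_{k+1,n}$ converges stably to a mixed-normal variable whose conditional variance is the limit of $T^{k,k+1}_n{}^{-1}R_n^{-2}\,\mathrm{Var}(\Delta_n)$, which the previous step shows equals the deterministic constant $1$. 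A deterministic conditional variance makes the mixture degenerate, so the limit is the genuine $\mathcal N(0,1)$, proving (\ref{eqn36}). The plug-in assertion then follows from Theorem~\ref{th4}: since $\hat\mu_n,\hat q_{N,n},\hat\nu_{nk},\hat q_{A_pA_q,n}-\hat m_{A_p}\hat m_{A_q},\hat q_{A_k,n}-\hat m^2_{A_k,n}$ converge a.s.\ to $N,Q,Z_k,c_{pq},\sigma^2_k$, and $T^{k,k+1}_n$ and $R_n$ are continuous in these quantities with limits bounded away from $0$, the estimated normalizer differs from the true one by a factor tending a.s.\ to $1$, and a final Slutsky step preserves the $\mathcal N(0,1)$ limit.

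The main obstacle I anticipate is the case with mismatched polynomial scales, i.e.\ the straddling index $k=t$ and the unequal-mean pairs inside the second block: here one must argue carefully that the subdominant variance contribution is genuinely negligible so that the plain normalization $T^{k,k+1}_n=1$ is legitimate, and that the stable-convergence bookkeeping remains valid when dividing by random, differently-scaled normalizers. It is precisely the stability (mixing) of the limit in Theorem~\ref{thm5} — rather than mere convergence in distribution — that licenses multiplying by the a.s.-convergent random coefficients $\tilde a_n,\tilde b_n$ and the random scales $N_{A_k,n}$, and it is the self-normalization that then converts the mixed-normal limit into the stated standard normal.
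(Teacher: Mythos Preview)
Your approach is essentially the same as the paper's: write the centred difference as the linear form $(1/\sqrt{N_{A_k,n}},-1/\sqrt{N_{A_{k+1},n}})$ applied to the $(k,k+1)$ block of the stable CLT from Theorem~\ref{thm5}, compute the resulting variance $c\,\Sigma^{k,k+1}c^{\top}$, and check that it coincides (exactly or asymptotically) with $T^{k,k+1}_n R_n^2$. One small simplification: for $k>t$ you need no domination argument at all, since $\Sigma_{kk}=\sigma^2_k$ and $\Sigma_{k+1,k+1}=\sigma^2_{k+1}$ regardless of whether $m_k=m_{k+1}$, so the variance equals $R_n^2$ identically; the approximation via $N_{A_{k+1},n}=o(N_{A_k,n})$ is needed only at the single straddling index $k=t$, exactly as the paper records with its ``$\approx$'' there.
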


	\begin{corollary}\label{co5}
		Assume that Assumptions \ref{as1}-\ref{as4} and the conditions (\ref{eqn3}), (\ref{eqn8}) and (\ref{eqn9}) hold. We have
		\begin{eqnarray}\label{eqn37}
			\left(\Sigma^{\ast}_n\right)^{-\frac{1}{2}}\left(
			\begin{array}{c}
				\sqrt{N_{A_2,n}}\left((\hat{m}_{A_1,n}-\hat{m}_{A_2,n})-(m_1-m_2)\right) \\
				\vdots \\
				\sqrt{N_{A_d,n}}\left((\hat{m}_{A_{d-1},n}-\hat{m}_{A_d,n})-(m_{d-1}-m_d) \right)
			\end{array}\right)
			\overset{L}{\to}{\cal N}({\bf 0},{\bf I}_{d-1}),
		\end{eqnarray}
		where $\Sigma^{\ast}_n$ is a positive definite matrix, the diagonal elements are
		\begin{eqnarray}\label{eqn38}
			\left(\Sigma^{\ast}_n\right)_{q,q}=\begin{cases}
				\frac{N_{A_{q+1},n}}{N_{A_q,n}}\Sigma_{qq}-2\sqrt{\frac{N_{A_{q+1},n}}{N_{A_q},n}}\Sigma_{q,q+1}+\Sigma_{q+1,q+1},\ &\text{$q\le t-1,$}\\
				\Sigma_{q+1,q+1},\ &\text{$q=t,$}\\
				\frac{N_{A_{q+1},n}}{N_{A_q,n}}\Sigma_{qq}+\Sigma_{q+1,q+1},\ &\text{$q>t$}.
			\end{cases}
		\end{eqnarray}
		the non-diagonal element is $\left(\Sigma^{\ast}_n\right)_{p,q}=0$ for $q\ge t$. And for $q\le t-1$,
		\begin{align*}
			\left(\Sigma^{\ast}_n\right)_{p,q}=
			\sqrt{\frac{N_{A_{p+1},n}N_{A_{q+1},n}}{N_{A_p,n}N_{A_q,n}}}\Sigma_{pq}-\sqrt{\frac{N_{A_{q+1},n}}{N_{A_q,n}}}\Sigma_{p+1,q}-\sqrt{\frac{N_{A_{p+1},n}}{N_{A_p,n}}}\Sigma_{p,q+1}+\Sigma_{p+1,q+1},
		\end{align*}
		where the matrix $\Sigma$ is define in Theorem \ref{thm5}. And, the convergence of \eqref{eqn37} still holds if $\Sigma^{\ast}_n$ is replaced by $\hat\Sigma^{\ast}_n$, that is, the random varibles  $N$, $Q$, $Z_p(p=1,\dots,t)$, $c_{pq}(1\le p<q\le t)$  and $\sigma^2_p(p=1,\dots,d)$ are replaced by their strongly consistency estimators $\hat{\mu}_{n}$, $\hat{q}_{N,n}$, $\hat{\nu}_{np}$, $\hat{q}_{A_pA_q,n}-\hat{m}_{A_p,n}\hat{m}_{A_q,n}$ and $\hat{q}_{A_p,n}-\hat{m}^2_{A_p,n}$, respectively.
	\end{corollary}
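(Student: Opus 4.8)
The plan is to deduce this multivariate studentized limit from the joint stable central limit theorem of Theorem \ref{thm5} by an explicit linear transformation followed by a Slutsky-type argument for stable convergence. Write $\mathbf{Y}_n=(\sqrt{N_{A_1,n}}(\hat m_{A_1,n}-m_1),\dots,\sqrt{N_{A_d,n}}(\hat m_{A_d,n}-m_d))^{\top}$, so Theorem \ref{thm5} gives $\mathbf Y_n\overset{stable}\to\mathcal N(0,\Sigma)$, where $\Sigma$ is itself random through $Z_1,\dots,Z_t$. Each coordinate of the target vector rewrites as
\begin{equation*}
\sqrt{N_{A_{k+1},n}}\big((\hat m_{A_k,n}-\hat m_{A_{k+1},n})-(m_k-m_{k+1})\big)=\sqrt{\tfrac{N_{A_{k+1},n}}{N_{A_k,n}}}\,Y_{n,k}-Y_{n,k+1},
\end{equation*}
so that the full vector equals $\mathbf W_n=B_n\mathbf Y_n$ for the $(d-1)\times d$ bidiagonal random matrix $B_n$ with $B_n[k,k]=\sqrt{N_{A_{k+1},n}/N_{A_k,n}}$ and $B_n[k,k+1]=-1$.

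First I would pin down the almost-sure limits of the entries of $B_n$. By Theorem \ref{th6}(iii), $N_{A_k,n}/n^{m_k/m_1}$ converges a.s. to a strictly positive limit for every $k$, hence the ratio $N_{A_{k+1},n}/N_{A_k,n}$ converges a.s.: to $Z_{k+1}/Z_k\in(0,\infty)$ when $k+1\le t$ (using Corollary \ref{co2}), and to $0$ whenever $m_{k+1}<m_k$, in particular across the threshold $k=t$ since $m_t=m_1>m_{t+1}$. Thus $B_n\to B$ a.s. Next I would identify the normalizer: a direct computation shows that $B_n\Sigma B_n^{\top}$ has exactly the diagonal entries \eqref{eqn38} and the stated off-diagonal entries, where the off-diagonal zeros for $q\ge t$ are forced by the block form of $\Sigma$ (its lower-right $(d-t)\times(d-t)$ corner is diagonal, so the smaller-mean estimators carry vanishing cross-covariances) together with the decoupling produced by the vanishing ratios $N_{A_{k+1},n}/N_{A_k,n}\to0$ for $k\ge t$. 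This matrix is $\Sigma^{\ast}_n$, and one checks it is positive definite for large $n$.

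The core of the argument is the studentization step. Because $\Sigma$ is a random (mixing) covariance and $\mathbf Y_n$ converges only stably, dividing by the \emph{random} square root $(\Sigma^{\ast}_n)^{-1/2}$ is precisely what removes the mixing variable and produces a genuine, non-mixed standard Gaussian. Concretely I would write $(\Sigma^{\ast}_n)^{-1/2}\mathbf W_n=(\Sigma^{\ast}_n)^{-1/2}B_n\mathbf Y_n$ and invoke that stable convergence is preserved under left multiplication by an a.s.-convergent sequence of random matrices; passing the a.s.\ limits of $B_n$ and of $(\Sigma^{\ast}_n)^{-1/2}$ through the stable limit $\mathcal N(0,\Sigma)$ collapses the conditional covariance to the identity, yielding $\mathcal N(\mathbf 0,\mathbf I_{d-1})$. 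This mirrors, in vector form, the one-dimensional normalization already carried out in Corollary \ref{co4}. For the plug-in version I would substitute $N,Q,Z_k,c_{pq},\sigma^2_k$ in $\Sigma^{\ast}_n$ by the strongly consistent estimators of Theorem \ref{th4}; since these converge a.s.\ to their targets, $\hat\Sigma^{\ast}_n-\Sigma^{\ast}_n\to0$ a.s., and a further Slutsky argument transfers the limit to $(\hat\Sigma^{\ast}_n)^{-1/2}\mathbf W_n$.

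The main obstacle will be controlling the degenerate directions. Wherever $N_{A_{k+1},n}/N_{A_k,n}\to0$, the corresponding entry of $B$ vanishes and the limiting matrix $B\Sigma B^{\top}$ may be singular, so I cannot simply normalize by its square root; keeping the finite-$n$, strictly positive matrix $\Sigma^{\ast}_n$ is essential, and I must verify that $(\Sigma^{\ast}_n)^{-1/2}$ stays well-conditioned enough along those shrinking directions for the stable-convergence Slutsky lemma to apply. Establishing that the random normalization genuinely ``divides out'' the mixing $\Sigma$ — rather than merely diagonalizing a deterministic limit — is the delicate point, and it is exactly where the positive definiteness of $\Sigma^{\ast}_n$ and the a.s.\ convergence of every random ingredient (via Theorems \ref{th4} and \ref{th6}) must be combined with care.
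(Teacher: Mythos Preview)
Your proposal is correct and follows essentially the same route as the paper: express the difference vector as $B_n\mathbf{Y}_n$ for the bidiagonal matrix $B_n$ with entries $\sqrt{N_{A_{k+1},n}/N_{A_k,n}}$ and $-1$, compute $B_n\Sigma B_n^{\top}$, and then pass to $\Sigma^{\ast}_n$ by noting that the discrepancy vanishes almost surely (the paper introduces the intermediate notation $\tilde\Sigma^{\ast}_n=B_n\Sigma B_n^{\top}$ and argues $\tilde\Sigma^{\ast}_n-\Sigma^{\ast}_n\overset{a.s.}{\to}\mathbf 0$, which is exactly your ``vanishing ratios'' observation). Your discussion of the studentization under stable convergence and of the degenerate directions is more explicit than the paper's, which dispatches these points in a single line.
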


	\begin{remark}\label{re1}
			From Corollary \ref{co5} we have the following results. Define
			\begin{equation}\label{eqn613}
				{\bf V}_n=\left(
				\begin{array}{c}
					\sqrt{N_{A_2,n}}\left((\hat{m}_{A_1,n}-\hat{m}_{A_2,n})-(m_1-m_2)\right) \\
					\vdots \\
					\sqrt{N_{A_{k_0},n}}\left((\hat{m}_{A_{k_0-1},n}-\hat{m}_{A_{k_0},n})-(m_{k_0-1}-m_{k_0}) \right)
				\end{array}\right).
			\end{equation}
			Then, when $k_0\le t$, then
			\begin{equation}\label{eqn614}
				\left(\Sigma^{\ast}_{n,0}\right)^{-\frac{1}{2}}{\bf V}_n
				\overset{L}{\to}{\cal N}({\bf 0},{\bf I}_{k_0-1}),
			\end{equation}
			where for $q\in\{1,\dots,k_0-1\}$,
			\begin{eqnarray*}
				\left(\Sigma^{\ast}_{n,0}\right)_{q,q}=\frac{N_{A_{q+1},n}}{N_{A_q,n}}\Sigma_{qq}-2\sqrt{\frac{N_{A_{q+1},n}}{N_{A_q},n}}\Sigma_{q,q+1}+\Sigma_{q+1,q+1},
			\end{eqnarray*}
			and for $p,q\in\{1,\dots,k_0-1\}$, $p\neq q$,
			\begin{equation*}
				\left(\Sigma^{\ast}_{n,0}\right)_{p,q}=
				\sqrt{\frac{N_{A_{p+1},n}N_{A_{q+1},n}}{N_{A_p,n}N_{A_q,n}}}\Sigma_{pq}-\sqrt{\frac{N_{A_{q+1},n}}{N_{A_q,n}}}\Sigma_{p+1,q}-\sqrt{\frac{N_{A_{p+1},n}}{N_{A_p,n}}}\Sigma_{p,q+1}+\Sigma_{p+1,q+1}.
		\end{equation*}

			When $k_0> t$, then 
			\begin{equation}\label{eqn615}
				\left(\Sigma^{\ast}_{n,1}\right)^{-\frac{1}{2}}{\bf V}_n
				\overset{L}{\to}{\cal N}({\bf 0},{\bf I}_{k_0-1}),
			\end{equation}
			where for $q\in\{1,\dots,k_0-1\}$,
			\begin{eqnarray*}
				\left(\Sigma^{\ast}_{n,1}\right)_{q,q}=\begin{cases}
					\frac{N_{A_{q+1},n}}{N_{A_q,n}}\Sigma_{qq}-2\sqrt{\frac{N_{A_{q+1},n}}{N_{A_q},n}}\Sigma_{q,q+1}+\Sigma_{q+1,q+1},\ &\text{$q\le t-1,$}\\
					\Sigma_{q+1,q+1},\ &\text{$q=t,$}\\
					\frac{N_{A_{q+1},n}}{N_{A_q,n}}\Sigma_{qq}+\Sigma_{q+1,q+1},\ &\text{$t<q<k_0$}.
				\end{cases}
			\end{eqnarray*}
			the non-diagonal element is $\left(\Sigma^{\ast}_{n,1}\right)_{p,q}=-\sqrt{\frac{N_{A_{q+1},n}}{N_{A_q,n}}}\Sigma_{p+1,q}$ for $q\ge t$. And for $q\le t-1$,
			\begin{align*}
				\left(\Sigma^{\ast}_{n,1}\right)_{p,q}=
				\sqrt{\frac{N_{A_{p+1},n}N_{A_{q+1},n}}{N_{A_p,n}N_{A_q,n}}}\Sigma_{pq}-\sqrt{\frac{N_{A_{q+1},n}}{N_{A_q,n}}}\Sigma_{p+1,q}-\sqrt{\frac{N_{A_{p+1},n}}{N_{A_p,n}}}\Sigma_{p,q+1}+\Sigma_{p+1,q+1}.
		\end{align*}

			And, the convergence of \eqref{eqn614} and \eqref{eqn615} still hold if $\Sigma^\ast_{n,1}$ and $\Sigma^\ast_{n,2}$ are replaced by the estimators $\hat\Sigma^\ast_{n,1}$ and $\hat\Sigma^\ast_{n,2}$, respectively.
	\end{remark}

	\section{Hypothesis Testing}\label{sec5}
	We investigate a quintessential reinforcement learning subproblem: the multi-armed bandits. Here, a gambler faces $d$ slot machines, each with its distinct expected payoff. The gambler has $T$ opportunities to play and aims to maximize cumulative earnings. In each round, the player selects one machine, invests a coin, pulls the lever, and observes the resulting payoff.
	In an ideal scenario where the expected payoffs are known, the optimal strategy involves repeatedly selecting the machine with the highest expected return. However, each pull's outcome is inherently stochastic, reflecting the probabilistic nature of real-life decision-making where outcomes of choices are uncertain and only become clear post-action. Moreover, each round's decision provides information solely about the chosen machine, leaving the performance of others speculative. Thus, the gambler must balance between exploiting machines with historically higher returns and exploring lesser-used machines to potentially discover more lucrative options.
	
	The urn model proposed in this paper mirrors the mechanics of multi-armed bandits through a reinforcement mechanism. We associate each machine with a distinct colored ball within an urn, initially filled with $n_0$ balls of each color. For each color $k \in \{1,2,\dots,d\}$, the fixed expected payoff is denoted by $m_k$, while the actual payoff from pulling the corresponding machine is modeled by a random distribution $\mu_k$. Here, 'pulling' an arm equates to drawing a ball from the urn, and the payoff relates to the process of adding balls back into the urn.
	Initially, without prior knowledge, each arm is equally likely to be chosen. Suppose the first set of choices is represented by ${\bf X}_1 = (X_{11}, X_{12}, \dots, X_{1d})$, leading to a payoff of $(A_{11}\mathbb{I}(X_{11}\neq0), A_{12}\mathbb{I}(X_{12}\neq0), \dots, A_{1d}\mathbb{I}(X_{1d}\neq0))$. The cumulative payoff for each arm at this stage becomes ${\bf H}_1 = (H_{11}, H_{12}, \dots, H_{1d})$, with each $H_{1k} = n_0 + X_{1k}A_{1k}$. In subsequent rounds, selections are guided by ${\bf H}_1$, with the number of times each arm is chosen modeled by ${\bf X}_2 \sim \text{Multi-Hyper}(|{\bf H}_1|, N_2, {\bf H}_1)$, reflecting a strategy that favors arms with higher cumulative payoffs while still allowing for exploration of less chosen options.
	The decision process at each stage is driven by the immediate past results, with arm selection probabilities adjusted based on the accrued payoffs, promoting arms that have yielded higher returns. The use of a multidimensional hypergeometric distribution ensures a systematic exploration across all machines, including those less frequently chosen. Employing the asymptotic results derived in Sections \ref{sec3} and \ref{sec4}, we proceed to formulate and test hypotheses concerning the expected payoffs of the $d$ arms.

	In this section, we explore a general hypothesis testing scenario for the expected payoffs across multiple arms of a multi-armed bandits. Specifically, for a fixed $k_0\in\{1,2,\dots,d\}$, we aim to test the following hypotheses:
	
	\begin{equation*}
		H_0:m_1=m_2=\dots=m_{k_0}
	\end{equation*}
	versus
	\begin{equation*}\label{test1}
		H_1:\mbox{exists}\ k\in\{1,\dots,k_0-1\},\ \mbox{ such\ that}\ m_1=\dots=m_k>m_{k+1}\ge\dots\ge m_{k_0}. 
	\end{equation*}
	
	\subsection{Under the null hypothesis}
	We define 
	\begin{equation*}
		\tilde{{\bf V}}_n=\left(\sqrt{N_{A_2,n}}\left(\hat{m}_{A_1,n}-\hat{m}_{A_2,n}\right),\dots,\sqrt{N_{A_{k_0},n}}\left(\hat{m}_{A_{k_0-1},n}-\hat{m}_{A_{k_0},n} \right)\right)^{\top}.
	\end{equation*}
	%	Let $\hat{\Sigma}^{\ast\ast}_{n,0}$ be $\hat{\Sigma}^{\ast\ast}_n$ at $t=k_0$. In detail, let $\hat{\Sigma}$ be the submatrix obtained by taking the first $k$ rows and $k$ columns of the matrix $\Sigma$ from Theorem \ref{thm5} after replacing the random variables with estimators at $t=k_0$. And for $q=1,\dots,k_0-1$,
	%	\begin{equation}
		%		\left(\hat{\Sigma}^{\ast\ast}_{n,0}\right)_{q,q}=
		%		\frac{N_{A_{q+1},n}}{N_{A_q,n}}\hat{\Sigma}_{qq}-2\sqrt{\frac{N_{A_{q+1},n}}{N_{A_q},n}}\hat{\Sigma}_{q,q+1}+\hat{\Sigma}_{q+1,q+1},
		%	\end{equation}
	%	for $1\le p<q\le k_0-1$,
	%	\begin{equation}
		%		\left(\hat{\Sigma}^{\ast\ast}_{n,0}\right)_{p,q}=
		%		\sqrt{\frac{N_{A_{p+1},n}N_{A_{q+1},n}}{N_{A_p,n}N_{A_q,n}}}\hat{\Sigma}_{pq}-\sqrt{\frac{N_{A_{q+1},n}}{N_{A_q,n}}}\hat{\Sigma}_{p+1,q}-\sqrt{\frac{N_{A_{p+1},n}}{N_{A_p,n}}}\hat{\Sigma}_{p,q+1}+\hat{\Sigma}_{p+1,q+1}.
		%	\end{equation}
	We choose the test statistic
	\begin{equation*}
		\Theta_n=\tilde{{\bf V}}^{\top}_n(\hat{\Sigma}^\ast_{n,0})^{-1}\tilde{{\bf V}}_n, 
	\end{equation*}
	where $\hat{\Sigma}^\ast_{n,0}$ is defined in Remark \ref{re1}. The critical region corresponding to the significance level $\alpha$ is
	\begin{equation*}
		W^{\ast}=\{\Theta_n>\chi^2_{1-\alpha}(k_0-1)\},
	\end{equation*}
	where $\chi^2_{1-\alpha}(k_0-1)$ is the $(1-\alpha)$ quantile of the chi-squared distribution with $k_0-1$ degrees of freedom. Thus, it is straightforward from  Remark \ref{re1} and the continuous mapping theorem, under $H_0$, we have that 
	\begin{equation*}
		\mathbb P(W^{\ast})\to\alpha.
	\end{equation*}

	\subsection{Under the alternative hypothesis}
	Next, we consider the distribution of the test statistic $\Theta_n$ under the alternative hypothesis
	$$H_1: m_1=\dots=m_k>m_{k+1}\geq \dots\ge m_{k_0},$$
	for some $\ k\in\{1,\dots,k_0-1\}$.
	We define 
	\begin{eqnarray*}
		{\bf \Delta}_n&=&\left(\sqrt{N_{A_2,n}}\left(m_1-m_2\right),\dots,\sqrt{N_{A_{k_0},n}}\left(m_{k_0-1}-m_{k_0}\right) \right)^{\top}\\
		&=&(0,\dots,0,\sqrt{N_{A_{k+1},n}}\left(m_{k}-m_{k+1}\right),\dots,\sqrt{N_{A_{k_0},n}}\left(m_{k_0-1}-m_{k_0}\right))^{\top}.
	\end{eqnarray*}
	Recall the definition of ${\bf V}_n$ and $\hat{\Sigma}^\ast_{n,1}$ in Remark \ref{re1}, then ${\bf V}_n=\tilde{{\bf V}}_n+{\bf \Delta}_n$,
	
	\begin{align}\label{eqn45}
		&\Theta_n=\tilde{{\bf V}}^{\top}_n(\hat{\Sigma}^\ast_{n,0})^{-1}\tilde{{\bf V}}_n\non\\
		=&({\bf V}_n-{\bf \Delta}_n)^{\top}[(\hat{\Sigma}^\ast_{n,1})^{-1}+(\hat{\Sigma}^\ast_{n,0})^{-1}-(\hat{\Sigma}^\ast_{n,1})^{-1}]({\bf V}_n-{\bf \Delta}_n)\non\\
		=&{\bf V}_n^{\top}(\hat{\Sigma}^\ast_{n,1})^{-1}{\bf V}_n-2{\bf \Delta}_n^{\top}(\hat{\Sigma}^\ast_{n,0})^{-1}{\bf V}_n+{\bf \Delta}_n^{\top}(\hat{\Sigma}^\ast_{n,0})^{-1}{\bf \Delta}_n+
		{\bf V}_n^{\top}[(\hat{\Sigma}^\ast_{n,0})^{-1}-(\hat{\Sigma}^\ast_{n,1})^{-1}]{\bf V}_n
	\end{align}
	Under alternative hypothesis $H_1$, the first term of (\ref{eqn45}) equals to
	\begin{equation}
		{\bf V}_n^{\top}(\hat{\Sigma}^\ast_{n,1})^{-1}{\bf V}_n\overset{L}{\to}\chi^2(k_0-1).
	\end{equation}
	The second term of (\ref{eqn45}) equals to
	\begin{eqnarray*}
		-2{\bf \Delta}_n^{\top}(\hat{\Sigma}^{\ast}_{n,0})^{-1}{\bf V}_n=-2{\bf \Delta}_n^{\top}(\hat{\Sigma}^{\ast}_{n,0})^{-1/2}(\hat{\Sigma}^{\ast}_{n,0})^{-1/2}{\bf V}_n.
	\end{eqnarray*}
	
	For ease of description later, we denote the estimator of $\Sigma$ at $t=k_0$ as $\hat\Sigma$ and the estimator of $\Sigma$ at $t=k$ as $\breve\Sigma$. Note that under $H_1$, 
	\begin{eqnarray*}
		\hat{\Sigma}^{\ast}_{n,0}&\approx&\begin{pmatrix}
			\left(\hat{\Sigma}^{\ast(1)}_{n,0}\right)_{(k-1)\times (k-1)} & {\bf 0}\\
			{\bf 0} & {\rm diag}\left\{	\frac{N_{A_{k+1},n}}{N_{A_k,n}}\hat\Sigma_{kk}+\hat{\sigma}^2_{k+1},\dots,\frac{N_{A_{k_0},n}}{N_{A_{k_0-1},n}}\hat\sigma^2_{k_0-1}+\hat{\sigma}^2_{k_0}\right\} 
		\end{pmatrix},
	\end{eqnarray*}
	we have
	\begin{eqnarray*}
		{\bf \Delta}_n^{\top}(\hat{\Sigma}^{\ast}_{n,0})^{-1/2}\approx\left(0,\dots,0,\frac{\sqrt{N_{A_{k+1},n}}(m_k-m_{k+1})}{\sqrt{\frac{N_{A_{k+1},n}}{N_{A_k,n}}\hat\Sigma_{kk}+\hat{\sigma}^2_{k+1}}},\dots,\frac{ \sqrt{N_{A_{k_0},n}}(m_{k_0-1}-m_{k_0})}{\sqrt{\frac{N_{A_{k_0},n}}{N_{A_{k_0-1},n}}\hat\sigma^2_{k_0-1}+\hat{\sigma}^2_{k_0}}}\right),
	\end{eqnarray*}
	where for all $j\in\{k+1,\dots,k_0\}$, $$N_{A_j,n}/n^{m_j/m_1}\overset{a.s.}{\to}m_1/m_jN\tilde{Z}_j.$$
	And, we have $	(\hat{\Sigma}^{\ast}_{n,0})^{-1/2}{\bf V}_n=O_p(1)$.
	Thus, given random varibles $N,\tilde{Z}_{k+1},\dots,\tilde{Z}_{k_0}$, 
	\begin{equation}\label{eqn47}
		{\bf \Delta}_n^{\top}(\hat{\Sigma}^{\ast}_{n,0})^{-1}{\bf V}_n= O_p(n^{m_{k+1}/(2m_1)}).
	\end{equation}
	For the third term of (\ref{eqn45}), given random varibles $N,\tilde{Z}_{k+1},\dots,\tilde{Z}_{k_0}$, 
	\begin{equation}
		{\bf \Delta}_n^{\top}(\hat{\Sigma}^\ast_{n,0})^{-1}{\bf \Delta}_n\approx\frac{N_{A_{k+1},n}(m_k-m_{k+1})^2}{\frac{N_{A_{k+1},n}}{N_{A_k,n}}\hat\Sigma_{kk}+\hat{\sigma}^2_{k+1}}+\dots+\frac{N_{A_d,n}(m_{{k_0}-1}-m_{k_0})^2}{\frac{N_{A_{k_0},n}}{N_{A_{k_0-1},n}}\hat\sigma^2_{k_0-1}+\hat{\sigma}^2_{k_0}}=O_{a.s.}(n^{m_{k+1}/m_1}).
	\end{equation}
	In order to calculate the last term of (\ref{eqn45}), we first consider the term $\hat{\Sigma}^\ast_{n,0}-\hat{\Sigma}^\ast_{n,1}$. 
	Recall the definition of $\hat{\Sigma}^\ast_{n,0}$ and $\hat{\Sigma}^\ast_{n,1}$, we have that the diagonal element of $\hat{\Sigma}^\ast_{n,0}-\hat{\Sigma}^\ast_{n,1}$ is 0 for $q\le k-1$. For $k\le q<k_0$,
	\begin{eqnarray*}
		&&(\hat{\Sigma}^\ast_{n,0}-\hat{\Sigma}^\ast_{n,1})_{q,q}\\
		&\approx&\begin{cases}
			\frac{N_{A_{k+1},n}}{N_{A_k,n}}\hat{\Sigma}_{kk}-2\sqrt{\frac{N_{A_{k+1},n}}{N_{A_k},n}}\hat{\Sigma}_{k,k+1}+\hat{\Sigma}_{k+1,k+1}-\frac{N_{A_{k+1},n}}{N_{A_k,n}}\breve{\Sigma}_{kk}-\breve{\Sigma}_{k+1,k+1}, \ &\text{$q=k,$}\\
			\frac{N_{A_{q+1},n}}{N_{A_q,n}}\hat{\Sigma}_{qq}-2\sqrt{\frac{N_{A_{q+1},n}}{N_{A_q},n}}\hat{\Sigma}_{q,q+1}+\hat{\Sigma}_{q+1,q+1}-\frac{N_{A_{q+1},n}}{N_{A_q,n}}\breve{\Sigma}_{qq}-\breve{\Sigma}_{q+1,q+1},\ &\text{$k<q<k_0$}.
		\end{cases}
	\end{eqnarray*}
	Note that under $H_1$, $N_{A_{k+1},n}/N_{A_k,n}\overset{a.s.}{\to}0$, and
	for all $q\ge k$, we have $\hat\Sigma_{qq}=O_{a.s.}(1)$, $\hat\Sigma_{q,q+1}=O_{a.s.}(1)$,  and
	\begin{equation*}
		\hat\Sigma_{kk}=\breve\Sigma_{kk},\ \ 	\hat\Sigma_{q+1,q+1}-\breve\Sigma_{q+1,q+1}=\hat\sigma^2_{q+1}[\hat Z_{q+1}(\hat{Q}/\hat{N}-1)+1]-\hat\sigma^2_{q+1}\overset{a.s.}{\to}0.
	\end{equation*}
	Thus, for all $q\in\{1,2,\dots,d-1\}$, 
	\begin{equation}\label{eqnew1}
		(\hat{\Sigma}^\ast_{n,0}-\hat{\Sigma}^\ast_{n,1})_{q,q}\overset{a.s.}{\to}0.
	\end{equation}
	Now we consider the non-diagonal elements of $\hat{\Sigma}^\ast_{n,0}-\hat{\Sigma}^\ast_{n,1}$. For all $1\le p<q\le k_0-1$, we have $(\hat{\Sigma}^\ast_{n,0}-\hat{\Sigma}^\ast_{n,1})_{p,q}=0$ for $q\le k-1$. And for $q\ge k$,
	\begin{eqnarray*}
		(\hat{\Sigma}^\ast_{n,0}-\hat{\Sigma}^\ast_{n,1})_{p,q}&=&
		\sqrt{\frac{N_{A_{p+1},n}N_{A_{q+1},n}}{N_{A_p,n}N_{A_q,n}}}\hat{\Sigma}_{pq}-\sqrt{\frac{N_{A_{q+1},n}}{N_{A_q,n}}}\hat{\Sigma}_{p+1,q}-\sqrt{\frac{N_{A_{p+1},n}}{N_{A_p,n}}}\hat{\Sigma}_{p,q+1}\\
		&&+\hat{\Sigma}_{p+1,q+1}+\sqrt{\frac{N_{A_{q+1},n}}{N_{A_q,n}}}\breve{\Sigma}_{p+1,q},
	\end{eqnarray*}
	Under $H_1$, for all $q> k$, we have $$\hat\Sigma_{pq}=o_{a.s.}(1),\  \frac{N_{A_{q+1},n}}{N_{A_q,n}}=O_{a.s.}(1),\ \hat{\Sigma}_{p+1,q}-\breve{\Sigma}_{p+1,q}\overset{a.s.}{\to}0,$$
	and 
	$$\hat\Sigma_{p,q+1}=\hat c_{p,q+1}\sqrt{\hat Z_p\hat Z_{q+1}}(\hat Q/\hat{N}-1)\overset{a.s.}{\to}0,\ \ \hat\Sigma_{p+1,q+1}=\hat c_{p,q+1}\sqrt{\hat Z_p\hat Z_{q+1}}(\hat Q/\hat N-1)\overset{a.s.}{\to}0.$$
	Then, under $H_1$, for all $1\le p<q\le k_0-1$,
	\begin{equation} \label{eqnew2}
		(\hat{\Sigma}^\ast_{n,0}-\hat{\Sigma}^\ast_{n,1})_{p,q}\overset{a.s.}{\to}0.
	\end{equation}
	By (\ref{eqnew1}) and (\ref{eqnew2}), we conclude 
	$$\hat{\Sigma}^\ast_{n,0}-\hat{\Sigma}^\ast_{n,1}\overset{a.s.}{\to}\bf 0.$$
	Now we calculate the last term of (\ref{eqn45}). By Corollary \ref{co5}, we have that ${\bf V}_n$ converges in distribution. Thus, by Slutsky Theorem, we obtain
	\begin{equation}\label{eqn49}
		{\bf V}_n^{\top}[(\hat{\Sigma}^\ast_{n,0})^{-1}-(\hat{\Sigma}^\ast_{n,1})^{-1}]{\bf V}_n={\bf V}_n^{\top}(\hat{\Sigma}^\ast_{n,0})^{-1}\left(\hat{\Sigma}^\ast_{n,1}-\hat{\Sigma}^\ast_{n,0}\right)(\hat{\Sigma}^\ast_{n,1})^{-1}{\bf V}_n        \overset{p}{\to}0.
	\end{equation}
	
	By (\ref{eqn45})--(\ref{eqn49}), under $H_1$,
	\begin{equation*}
		\Theta_n\overset{d}{=}{\bf V}_n^{\top}(\hat{\Sigma}^\ast_{n,1})^{-1}{\bf V}_n+{\bf \Delta}_n^{\top}(\hat{\Sigma}^\ast_{n,0})^{-1}{\bf \Delta}_n,
	\end{equation*}
	where the first term is asymptotically chi-square distribution and the second term tends almost surely to infinity with order $n^{m_{k+1}/m_1}$ when $n$ tends to $\infty$. Thus, given random variables $N$, $\tilde Z_{k+1},\dots,\tilde Z_{k_0}$, we conclude that the power of this test tends to 1, effectively distinguishing between the homogeneity and hierarchical inequality of payoffs among the arms.

	\section{Simulation Results}\label{sec6}
	This section presents simulation results to validate the theoretical findings discussed in Section \ref{sec5}, focusing on the normalized urn composition and the power of the hypothesis tests. We specifically address the case where $d=3$, considering various scenarios for the distribution of replacement numbers.

	\subsection{Simulation results for the normalized urn composition}
	We consider the case $m1=m2>m3$. In Section \ref{sec3}, we have proved the existence of $Z_1$ and $Z_2$ without their specific distribution. Therefore, next, we give some simulation results under specific parameter settings as shown in Fig. \ref{figure1} and \ref{figure2}. In Fig. \ref{figure1}, we assume that ${\bf Y}_0=(6,6,6)$. And in Fig. \ref{figure2}, we assume that the initial composition is ${\bf Y}_0=(6,3,6)$. In addition, in both Fig. \ref{figure1} and \ref{figure2}, we take the following settings.  
	Taken $\{N_n,n=1,2,\dots\}$ independently and uniformly distributed on $\{3,6\}$, and
	$$\mathbb P(N_n=3)=1/3,\ \mathbb P(N_n=6)=2/3.$$
	
	\begin{enumerate}[label=Case \alph*.]
		\item
		Taken $\{A_{n1},n=1,2,\dots\}$ and $\{A_{n2},n=1,2,\dots\}$ independently and uniformly distributed on $\{3,4,5\}$ with equal probability $1/3$, respectively. Moreover, taken $A_{n3}=1$ for all $n$.

		\item Taken $\{A_{n1},n=1,2,\dots\}$ independently and uniformly distributed on $\{2,3,4,5,6\}$ with equal probability $1/5$. Taken $\{A_{n2},n=1,2,\dots\}$ independently and uniformly distributed on $\{3,4,5\}$ with equal probability $1/3$.
		Moreover, taken $A_{n3}=1$ for all $n$.

		\item For $k=1,2,3$, let $Y_k$ be the $k$th component of ${\rm Multinomial}(5;(0.4,0.4,0.2))$.
		Taken $\{A_{nk}, n=1,2,\dots\}$ independently and identically distributed, and 
		$$A_{n1}\overset{d}{=}2+Y_1,\ A_{n2}\overset{d}{=}2+Y_2,\ A_{n3}=1.$$

		\item For $k=1,2,3$, let $Y_k$ be the $k$th component of ${\rm Multinomial}(5;(0.3,0.4,0.3))$.
		Taken $\{A_{nk}, n=1,2,\dots\}$ independently and identically distributed, and 
		$$A_{n1}\overset{d}{=}1+2Y_1,\ A_{n2}\overset{d}{=}2+Y_2,\ A_{n3}=1.$$
	\end{enumerate}

	\begin{figure}
		\centering
		\includegraphics[width=0.8\textwidth]{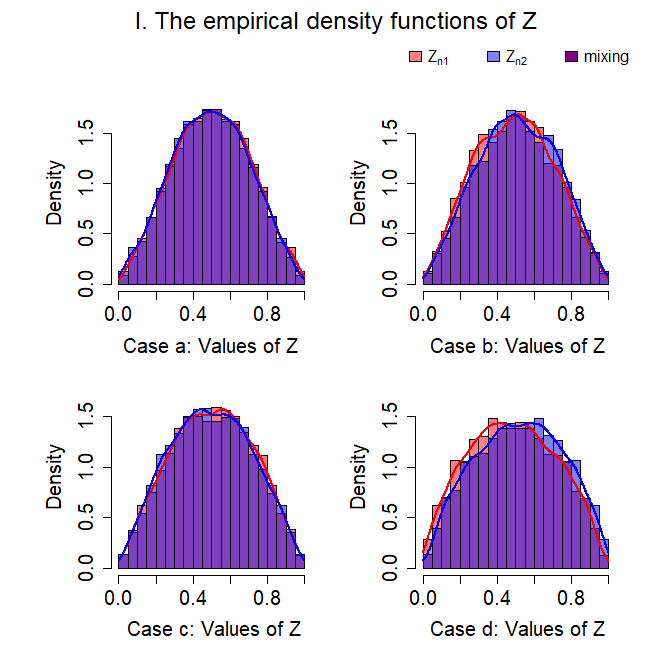}
		\captionsetup{skip=-6pt}
		\caption{Histograms and density plots of the normalized urn compositions $Z_{n1}$ and $Z_{n2}$. We set the initial urn composition ${\bf Y}_0=(6,6,6)$. We select 5000 samples, each with 10,000 stages of urn updates, i.e., $n=10,000$.}
		\label{figure1}
	\end{figure}

	\begin{figure}
		\centering
		\includegraphics[width=0.8\textwidth]{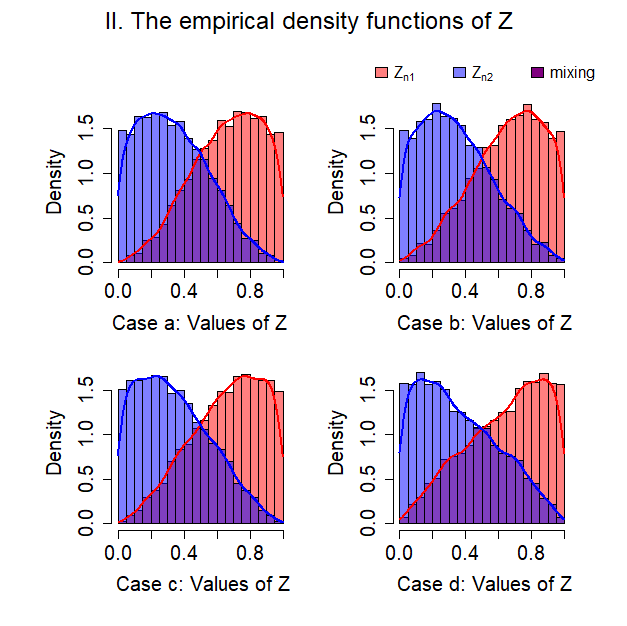}
		\captionsetup{skip=-6pt}
		\caption{Histograms and density plots of the normalized urn compositions $Z_{n1}$ and $Z_{n2}$. We set the initial urn composition ${\bf Y}_0=(6,3,6)$. We select 5000 samples, each with 10,000 stages of urn updates, i.e., $n=10,000$.}
		\label{figure2}
	\end{figure}

	In Figures \ref{figure1} and \ref{figure2}, the only different setting is whether the initial values of the first and second type of balls are the same. From the results in Fig. \ref{figure1} and \ref{figure2}, it can be seen that the initial value has a significant effect on the limiting distribution. In Case a and Case b, the components of the replacement matrix ${\bf A}_n$ are uncorrelated. Conversely, in Case c and Case d, the components exhibit correlation. Comparing Cases a, b and Cases c, d of Figures \ref{figure1} and \ref{figure2}, it can be seen that the correlation of the replacement matrices may affect the distribution of the limits. Additionally, in Case a and Case c of Figures \ref{figure1} and \ref{figure2}, the variables $A_{n1}$ and $A_{n2}$ are assumed to be identically distributed. Therefore, if the initial values of the first two balls are the same, then $Z_1$ and $Z_2$ should be identically distributed. This hypothesis is supported by the results shown in Figure \ref{figure1} for Cases a and c. In contrast, in Case b and Case d, while only the first-order moments of $A_{n1}$ and $A_{n2}$ are identical. Comparing Case a and Case b or Case c and Case d of Figures \ref{figure1} and \ref{figure2}, respectively, it is possible to see that the distribution of the limit for a given initial value depends entirely on the first-order moments of the replacement matrix.
	
	\subsection{Simulation results for power}
	In this section, we give the simulation results for
	the power of the test 
	\begin{equation}\label{eqn612}
		H_0:m_1=m_2=m_3\ \  \mbox{versus}\ \  H_1:m_1=m_2>m_3.
	\end{equation}
	under the following parameter settings. Let ${\bf H}_0=(9,9,9)$. 	Let $\{N_n,n=1,2,\dots\}$ independently and uniformly distributed on $\{6,8\}$, and
	$$\mathbb P(N_n=6)=1/4,\ \mathbb P(N_n=8)=3/4.$$ 
	Define $Y\sim {\rm Poisson}(6)$.
	For $k=1,2,3$, taken $\{A_{nk},n=1,2,\dots\}$ independently and uniformly distributed, and 
	$$A_{n1}\overset{d}{=} Y+1,\ A_{n2}\overset{d}{=} Y+1,\ A_{n3}\overset{d}{=} Y+1-0.2e,$$
	where we gradually choose $e$ from 1 to 10.

	We can see from Figure \ref{figure4} that as the value of $m_2-m_3$ increases, the power of the test \eqref{eqn612} approaches 1. Remarkably, even when $m_2-m_3$ is minimal, with a sample size of 500 and 1,000 values at each sample point (equivalent to 1,000 updates of the urn), the power is already nearly 1. Therefore, the test remains effective even when the difference in means is small.

	\begin{figure}
		\centering
		\includegraphics[width=1.0\textwidth]{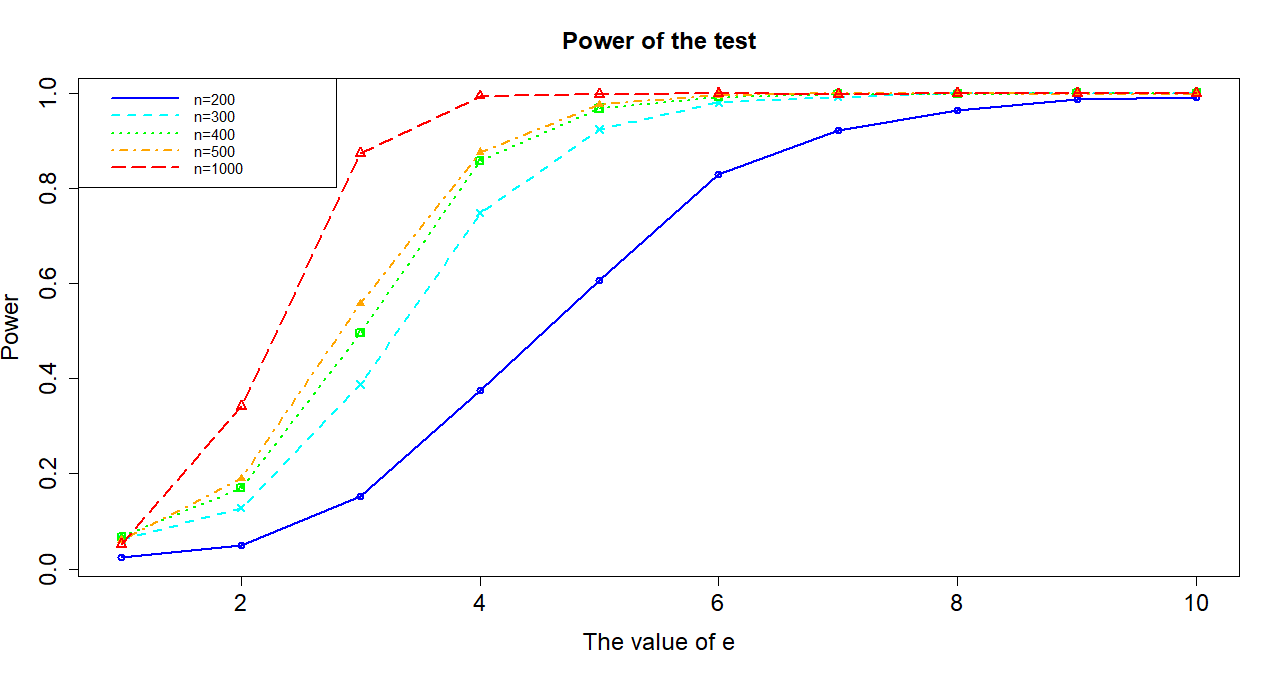}
		\captionsetup{skip=-3pt}
		\caption{Power for test \eqref{eqn612}. We select 500 samples, each with 1000 stages of urn updates. In addition, we define $e=5(m_2-m_3)$.}
		\label{figure4}
	\end{figure}

	\section*{Acknowledgments}
	Zhidong Bai was partially supported by NSFC Grants No.12171198, No.12271536, and Team Project of Jilin Provincial Department of Science and Technology No.20210101147JC. Jiang Hu was partially supported by NSFC Grants No. 12171078, No. 12292980, No. 12292982, National Key R $\&$ D Program of China No. 2020YFA0714102, and Fundamental Research Funds for the Central Universities No. 2412023YQ003.

	\vspace{0.6cm}
	\begin{appendix}
		\renewcommand\thelemma{A\arabic{lemma}}
		\setcounter{equation}{0}
		\renewcommand\theequation{A\arabic{equation}}

		\section{Proofs of main results}\label{secA0}
		In this section, we give proofs of the main results.
		\begin{proof}[Proof of Lemma \ref{le1}]
			For all $k\in\{1,2,\dots,d\}$,
			$H_{nk}=H_0+\sum_{i=1}^nA_{ik}X_{ik}\ge \sum_{i=1}^n\mathbb I(X_{ik}\ge1).$ It's trivial that $$S_n=S_0+\sum_{i=1}^n\sum_{k=1}^dA_{ik}X_{ik}\le S_0+C_1\sum_{i=1}^n\sum_{k=1}^dA_{ik}=O_{a.s.}(n)$$
			and 
			$$\mathbb P(X_{ik}=0|{\cal F}_{i-1})=\frac{{{S_{i-1}-H_{i-1,k}} \choose  N_i} }{ {S_{i-1} \choose N_i}}\le \frac{S_{i-1}-H_{i-1,k}}{S_{i-1}}\le \frac{S_{i-1}-H_{0k}}{S_{i-1}}.$$ 
			Thus, we obtain
			\begin{eqnarray*}
				&&\sum_{i=1}^n\mathbb E(\mathbb I(X_{ik}\ge1)|{\cal F}_{i-1})=\sum_{i=1}^n \mathbb P(X_{ik}\ge1|{\cal F}_{i-1})\\
				&=&\sum_{i=1}^n[1-\mathbb P(X_{ik}=0|{\cal F}_{i-1})]\ge\sum_{i=1}^n\frac{H_{0k}}{S_{i-1}}\to\infty,\ \ a.s..
			\end{eqnarray*}
			By Lemma \ref{LeB6}, we conclude $\sum_{i=1}^n\mathbb I(X_{ik}\ge1)\overset{a.s.}{\to}\infty$, then $H_{nk}(k\in\{1,2,\dots,d\})$ converges to $\infty$ almost surely.
		\end{proof}
		\begin{proof}[Proof of Theorem \ref{th1}]
			For all $k\in\{1,2,\dots,t\}$, we have
			\begin{eqnarray}\label{eqqx1}
				&&Z_{n+1,k}-Z_{nk}=\frac{H_{n+1,k}}{S_{n+1}}-Z_{nk}\\
				&=&\frac{H_{nk}+A_{n+1,k}X_{n+1,k}-Z_{nk}\left(S_n+\sum_{j=1}^dA_{n+1,j}X_{n+1,j}\right)}{S_{n+1}}\non\\
				&=&\frac{(1-Z_{nk})A_{n+1,k}X_{n+1,k}-Z_{nk}\sum_{j\neq k}A_{n+1,j}X_{n+1,j}}{S_{n+1}}\non\\
				&=&\frac{(1-Z_{nk})A_{n+1,k}X_{n+1,k}-Z_{nk}\sum_{j\neq k}A_{n+1,j}X_{n+1,j}}{S_n}\non\\
				&&\quad-\Big(\frac{1}{S_n}-\frac{1}{S_{n+1}}\Big)\Big[(1-Z_{nk})A_{n+1,k}X_{n+1,k}-Z_{nk}\sum_{j\neq k}A_{n+1,j}X_{n+1,j}\Big],
			\end{eqnarray}
			where $X_{n+1,d}=N_{n+1}-\sum\limits_{j=1}^{d-1}X_{n+1,j}$. We define
			\begin{eqnarray*}
				e_{n+1,k}&=&\Big(\frac{1}{S_n}-\frac{1}{S_{n+1}}\Big)\Big[(1-Z_{nk})A_{n+1,k}X_{n+1,k}-Z_{nk}\sum_{j\neq k}A_{n+1,j}X_{n+1,j}\Big],\\
				E_{nk}&=&\sum_{i=1}^ne_{ik},\ \ L_{nk}=Z_{nk}+E_{nk}.
			\end{eqnarray*}
			Then, the random varible $L_{nk}$ is ${\cal G}_n$-measurable. Since
			\begin{equation}\label{eqqx5}
				S_n=S_0+\sum\limits_{i=1}^n\sum\limits_{k=1}^dA_{ik}X_{ik}\ge S_0+\sum\limits_{i=1}^n\sum\limits_{k=1}^dX_{ik}\ge S_0+n\ge n+1,
			\end{equation}
			we obtain
			\begin{eqnarray}\label{eqqx3}
				&&E_{nk}=\sum_{i=1}^ne_{ik}\le \sum_{i=1}^n|e_{ik}|\non\\
				&\le&\sum_{i=1}^n\frac{(1-Z_{i-1k})A_{ik}X_{ik}\sum_{j=1}^dA_{ij}X_{ij}+Z_{i-1,k}\sum_{s\neq k}\sum_{j=1}^dA_{is}A_{ij}X_{is}X_{ij}}{S_{i-1}S_i}\non\\
				&\le&\sum_{i=1}^n\frac{C^2_1\left(\sum_{j=1}^dA_{ik}A_{ij}+\sum_{s\neq k}\sum_{j=1}^dA_{is}A_{ij}\right)}{i^2}.
			\end{eqnarray}
			Since the second-order moments of the replacement matrices are bounded, so $E_{nk}$ converges almost surely, which also implies that $\{L_{nk},n\ge1\}$ is ${\cal L}_1$ bounded. In addition, we have
			\begin{eqnarray}\label{eqqx2}
				&&\mathbb E(L_{n+1,k}-L_{nk}|{\cal G}_n)=\mathbb E(Z_{n+1,k}-Z_{nk}+e_{n+1,k}|{\cal G}_n)\non\\
				&=&\mathbb E\Big[\frac{(1-Z_{nk})A_{n+1,k}X_{n+1,k}-Z_{nk}\sum_{j\neq k}A_{n+1,j}X_{n+1,j}}{S_n}|{\cal G}_n\Big]\non\\
				&=&\frac{N_{n+1}}{S_n}\Big[(1-Z_{nk})Z_{nk}m_{n+1,k}-\sum_{j\neq k}Z_{nk}Z_{nj}m_{n+1,j}\Big]\non\\
				&=&\frac{N_{n+1}}{S_n}\Big[\sum_{j=t+1}^dZ_{nk}Z_{nj}(m_{n+1,k}-m_{n+1,j})\Big],
			\end{eqnarray}
			where the last equation is because that by Assumption \ref{as2}, $m_1=\dots=m_t>m_{t+1}\ge \dots\ge m_d$ holds, and
			$$(1-Z_{nk})Z_{nk}m_{n+1,k}-\sum_{j\neq k}Z_{nk}Z_{nj}m_{n+1,k}=0.$$
			Thus, the random sequence $\{L_{nk},n\ge1\}$ is eventually a ${\cal L}_1$ bounded submartingale. 
			Then it converges almost surely to a random varible. By the definition of $L_{nk}$ and the almost sure convergence of $E_n$, we obtain the almost sure convergence of $Z_{nk}$.
		\end{proof}

		\begin{proof}[Proof of Theorem \ref{th2}]
			For a fixed  $\theta\in\left(\frac{m_{t+1}}{m_1},1\right)$, we define  $G_n=\frac{\sum_{k=t+1}^{d}H_{nk}}{H^{\theta}_{n1}}$, then
			\begin{eqnarray*}
				\frac{G_{n+1}}{G_n}&=&\frac{\sum_{k=t+1}^{d}H_{n+1,k}}{H^{\theta}_{n+1,1}} \frac{H^{\theta}_{n1}}{\sum_{k=t+1}^{d}H_{nk}}\\
				&=&\left(\frac{H_{n1}}{H_{n+1,1}}\right)^{\theta}\frac{\sum_{k=t+1}^{d}H_{nk}+\sum_{k=t+1}^{d}A_{n+1,k}X_{n+1,k}}{\sum_{k=t+1}^{d}H_{nk}}.	
			\end{eqnarray*}
			Thus, we have 
			\begin{eqnarray*}
				&&\mathbb E\left(\frac{G_{n+1}}{G_n}-1|{\cal G}_n\right)\\
				&\le&\mathbb E\left[\left(\frac{H_{n1}}{H_{n+1,1}}\right)^{\theta}-1|{\cal G}_n\right]+\mathbb E\left(\frac{\sum_{k=t+1}^{d}A_{n+1,k}X_{n+1,k}}{\sum_{k=t+1}^{d}H_{nk}}|{\cal G}_n\right)\\
				&\le&-\theta\mathbb E\left(\frac{A_{n+1,1}X_{n+1,1}}{H_{n+1,1}}|{\cal G}_n\right)+\frac{N_{n+1}}{S_n} \frac{\sum_{k=t+1}^{d}m_{n+1,k}H_{nk}}{\sum_{k=t+1}^{d}H_{nk}}\\
				&=&-\theta\mathbb E\left(\frac{A_{n+1,1}X_{n+1,1}}{H_{n,1}}|{\cal G}_n\right)+\frac{N_{n+1}}{S_n} \frac{\sum_{k=t+1}^{d}m_{n+1,k}H_{nk}}{\sum_{k=t+1}^{d}H_{nk}}\\
				&&\quad+\theta\mathbb E\left[\left(\frac{1}{H_{n1}}-\frac{1}{H_{n+1,1}}\right)A_{n+1,1}X_{n+1,1}|{\cal G}_n\right]\\
				&=&\frac{N_{n+1}}{S_n}\left(\frac{\sum_{k=t+1}^{d}m_{n+1,k}H_{nk}}{\sum_{k=t+1}^{d}H_{nk}}-\theta m_{n+1,1}\right)+\theta\mathbb E\left[\frac{A^2_{n+1,1}X^2_{n+1,1}}{H_{n1}H_{n+1,1}}|{\cal G}_n\right]\\
				&\le&\frac{N_{n+1}}{S_n}\left(\frac{\sum_{k=t+1}^{d}m_{n+1,k}H_{nk}}{\sum_{k=t+1}^{d}H_{nk}}-\theta m_{n+1,1}\right)+\frac{\theta}{H^2_{n1}}\mathbb E(A^2_{n+1,1})\mathbb E(X^2_{n+1,1}|{\cal G}_n)\\
				&\le&\frac{N_{n+1}}{S_n}\left(\frac{\sum_{k=t+1}^{d}m_{n+1,k}H_{nk}}{\sum_{k=t+1}^{d}H_{nk}}-\theta m_{n+1,1}+\frac{2\theta C_1(C_1+1)C_2}{H_{n1}}\right),
			\end{eqnarray*}
			where the first inequality is due to that $(1-x)^\theta\le 1-\theta x$ holds for all $0<x<1$, and the third inequality is because that for all $j\in\{1,2,\dots,d\}$,
			\begin{equation*}
				\mathbb E(X^2_{n+1,1}|{\cal G}_n)\le C_1 \mathbb E(X_{n+1,1}|{\cal G}_n)=C_1 N_{n+1}Z_{n1}=C_1N_{n+1}H_{n1}/S_n.
			\end{equation*}
			By Assumption \ref{as2}, we have $m_{n+1,k}\to m_k$ holds for all $k\in\{1,2,\dots,d\}$ and $m_1=m_2=\dots=m_t>m_{t+1}\ge\dots\ge m_d$. Since $H_{n1}$ converges to $\infty$ almost surely, for large enough $n$, we obtain that $\mathbb E\left(G_{n+1}/G_{n}-1\right)$ is less than or equal to $0$. Thus, for large enough $n$,
			\begin{equation*}
				\mathbb E(G_{n+1}-G_n|{\cal G}_n)=G_n\mathbb E(G_{n+1}/G_n-1)\le0,\ \ \mbox{a.s.}.
			\end{equation*}
			Then, $G_n$ is eventually a supermartingale, then it converges almost surely to a finite random variable $Q$.  By (\ref{eqqx5}), we have $S_n$ converges to $\infty$, then
			\begin{eqnarray*}
				1-\sum\limits_{k=1}^{t}Z_{nk}=\frac{\sum_{k=t+1}^{d}H_{nk}}{S_n}=\frac{\sum_{k=t+1}^{d}H_{nk}}{H^{\theta}_{n1}}\frac{H^{\theta}_{n1}}{S_n}\le G_nS^{-1+\theta}_n\to0,\ \ a.s.,
			\end{eqnarray*}
			thus $\sum_{k=1}^{t}Z_{nk}$ converges almost surely to 1. Theorem \ref{th2} is proved.
		\end{proof}

		\begin{proof}[Proof of Corollary \ref{co1}]
			For all $k\in\{1,2,\dots,d\}$ and $i=1,2,\dots,$
			\begin{eqnarray*}
				\mathbb E(A_{ik}X_{ik}|{\cal F}_{i-1})&=& \mathbb E[\mathbb E(A_{ik}X_{ik}|{\cal G}_{i-1})|{\cal F}_{i-1}]=m_{ik}Z_{i-1,k}\mathbb E(N_i|{\cal F}_{i-1})\\
				&\overset{a.s.}{\to}&\begin{cases}
					Nm_kZ_k, &\text{if $k\in\{1,\dots,t\}$},\\
					0,&\text{if $k\in\{t+1,\dots,d\}$}.
				\end{cases}
			\end{eqnarray*}
			Note that
			$$\frac{H_{nk}}{n}=\frac{1}{n}\sum_{i=1}^{n}A_{ik}X_{ik}+\frac{H_{0k}}{n},$$
			by Lmmma \ref{leB1}, where we choose $\alpha_j=1$ and $\beta_j=j$, we conclude that Corollary \ref{co1} holds.
		\end{proof}

		\begin{proof}[Proof of Theorem \ref{th3}]
			We proof Theorem \ref{th3} by Lemma \ref{leB3}. For all $k\in\{1,2,\dots,d\}$, $j\in\{1,\dots,k-1,k+1,\dots,d\}$, we define $\xi_{n,kj}=\ln (H_{nk}/H^{m_k/m_j}_{nj})$, $\Lambda_{n+1,kj}=\mathbb E(\xi_{n+1,kj}-\xi_{n,kj}|{\cal G}_n)$, and $W_{n+1,kj}=\mathbb E[(\xi_{n+1,kj}-\xi_{n,kj})^{2}|{\cal G}_n]$. Next, we will verify
			\begin{equation}\label{eqn4}
				\sum_n\Lambda_{n,kj}<\infty,\ \ \ \sum_n W_{n,kj}<\infty,\ \ a.s.
			\end{equation}
			respectively. We have
			\begin{eqnarray*}
				&&\xi_{n+1,kj}-\xi_{n,kj}=\ln \frac{H_{n+1,k}}{H^{m_k/m_j}_{n+1,j}}-\ln\frac{H_{nk}}{H^{m_k/m_j}_{nj}}\non\\
				&=&\ln H_{n+1,k}-\ln H_{nk}-\frac{m_k}{m_j}(\ln H_{n+1,j}-\ln H_{nj})\non\\
				&=&\ln(H_{nk}+A_{n+1,k}X_{n+1,k})-\ln 
				H_{nk}-\frac{m_k}{m_j}[\ln(H_{nj}+A_{n+1,j}X_{n+1,j})-\ln H_{nj}]\\
				&=&\int_{0}^{A_{n+1,k}X_{n+1,k}}\frac{1}{H_{nk}+s}\ ds-\frac{m_k}{m_j}\int_{0}^{A_{n+1,j}X_{n+1,j}}\frac{1}{H_{nj}+s}\ ds,
			\end{eqnarray*}
			For all $s_1\in(0,A_{n+1,k}X_{n+1,k})$ and $s_2\in(0,A_{n+1,j}X_{n+1,j})$, we have
			\begin{equation*}
				\frac{1}{H_{nk}}-\frac{s_1}{H^{2}_{nk}}\le \frac{1}{H_{nk}+s_1}\le \frac{1}{H_{nk}},\ \ \ \frac{1}{H_{nj}}-\frac{s_2}{H^{2}_{nj}}\le \frac{1}{H_{nj}+s_2}\le \frac{1}{H_{nj}},
			\end{equation*}
			Thus,
			\begin{eqnarray}\label{eqn5}
				\Lambda_{n,kj}&=&\mathbb E(\xi_{n+1,kj}-\xi_{n,kj}|{\cal G}_n)\non\\
				&\le&\mathbb E\left[ \left(\frac{A_{n+1,k}X_{n+1,k}}{H_{nk}}+\frac{m_k}{m_j}\frac{A^{2}_{n+1,j}X^{2}_{n+1,j}}{2H^{2}_{nj}}-\frac{m_k}{m_j}\frac{A_{n+1,j}X_{n+1,j}}{H_{nj}}\right)|{\cal G}_n\right]\non\\
				&=&\mathbb E\left(\frac{A_{n+1,k}N_{n+1}}{S_n}-\frac{m_k}{m_j}\frac{A_{n+1,j}N_{n+1}}{S_n}|{\cal G}_n\right)+\frac{m_k}{m_j}\mathbb E\left(\frac{A^{2}_{n+1,j}X^{2}_{n+1,j}}{2H^{2}_{nj}}|{\cal G}_n\right).
			\end{eqnarray}
			By (\ref{eqqx5}), the first term of (\ref{eqn5}) equal to
			\begin{equation}\label{eqn6}
				\frac{N_{n+1}}{S_n}\left(m_{n+1,k}-\frac{m_k}{m_j} m_{n+1,j}\right)\le\frac{C_1|m_jm_{n+1,k}-m_km_{n+1,j}|}{m_jn}.
			\end{equation}
			By Lemma \ref{leA3}, we have that for all 
			$0<\gamma_j<m_j/m_1$, $H^{-1}_{nj}=O_{a.s.}(n^{-\gamma_j})$. Thus, there exist constants $C_4$ such that the second term of (\ref{eqn5}) less than or equals to
			\begin{eqnarray}\label{eqn7}
				\frac{C_1m_k}{2m_j} \frac{\mathbb E(A^{2}_{n+1,j})}{H_{nj}S_n}N_{n+1}\le \frac{C^2_1C_2m_k}{2m_j} \frac{1}{H_{nj}S_n}\le C_4 \frac{1}{n^{1+\gamma_j}},\ \ a.s..
			\end{eqnarray}
			Then, by Assumption \ref{as3} and (\ref{eqn5})-(\ref{eqn7}), we conclude
			\begin{equation*}
				\sum\limits_{n=1}^{\infty}\Lambda_{n,kj}<\infty,\ \ a.s..
			\end{equation*}
			The first part of (\ref{eqn4}) is proved, for the second part, we have
			\begin{eqnarray*}
				W_{n+1,kj}&=&\mathbb E[(\xi_{n+1,kj}-\xi_{n,kj})^{2}|{\cal G}_n]\\
				&=&\mathbb E\left\{\left[\ln H_{n+1,k}-\ln H_{nk}-\frac{m_k}{m_j}(\ln H_{n+1,j}-\ln H_{nj})\right]^{2}|{\cal G}_n\right\}\\
				&\le&2\left\{\mathbb E[(\ln H_{n+1,k}-\ln H_{nk})^{2}|{\cal G}_n]+\frac{m^{2}_k}{m^{2}_j}\mathbb E[(\ln H_{n+1,j}-\ln H_{nj})^{2}|{\cal G}_n]\right\}\\
				&\le&2\mathbb E\left[\left(\frac{A^{2}_{n+1,k}X^{2}_{n+1,k}}{H^{2}_{nk}}+\frac{m^{2}_k}{m^{2}_j}\frac{A^{2}_{n+1,j}X^{2}_{n+1,j}}{H^{2}_{nj}}\right)|{\cal G}_n\right].
			\end{eqnarray*}
			The form of the above equation is similar to the second term of (\ref{eqn5}). From (\ref{eqn7}), we conclude		
			$$\sum\limits_{n=1}^{\infty}W_{n,kj}<\infty,\ \ a.s..$$
			Thus, by Lemma \ref{leB3}, we obtain that for all $k\in\{1,2,\dots,d\}$, $j\in\{1,\dots,k-1,k+1,\dots,d\}$, there exists a finite random variable $\tilde{\xi}_{kj}$ such that 
			$$\xi_{n,kj}=\ln \frac{H_{nk}}{H^{m_k/m_j}_{nj}}\overset{a.s.}\to\tilde{\xi}_{kj},$$
			then, $\frac{H_{nk}}{H^{m_k/m_j}_{nj}}$ converges almost surely to a finite random variable $\xi_{kj}$ on $(0,+\infty)$. Theorem \ref{th3} is proved.
		\end{proof}

		\begin{proof}[Proof of Corollary \ref{co2}]
			When $t=1$, it is obviously that $Z_1$ equals to 1 almost surely by Theorem \ref{th2}. Next we consider the case where $t>1$. Note that
			\begin{eqnarray*}
				\frac{1}{Z_{nk}}=\frac{S_n}{H_{nk}}=\frac{H_{n1}}{H_{nk}}+\dots+\frac{H_{n,k-1}}{H_{nk}}+1+\frac{H_{n,k+1}}{H_{nk}}+\dots+\frac{H_{n,t+1}}{H_{nk}}+\dots+\frac{H_{nd}}{H_{nk}},
			\end{eqnarray*}
			By Theorem \ref{th3}, we have that for all $s_1\in\{1,\dots,k-1,k+1,\dots,t\}$ and  $s_2\in\{t+1,\dots,d\}$,
			\begin{equation*}
				\frac{H_{ns_1}}{H_{nk}}\overset{a.s.}{\to}\xi_{sk}\in(0,+\infty),\quad \frac{H_{ns_2}}{H_{nk}}\overset{a.s.}{\to}0.
			\end{equation*}
			Thus, we have 
			\begin{equation*}
				{\mathbb P}\left(1\le\lim_{n\to\infty}\left\{\frac{H_{n1}}{H_{nk}}+\dots+\frac{H_{n,k-1}}{H_{nk}}+1+\frac{H_{n,k+1}}{H_{nk}}+\dots+\frac{H_{n,t+1}}{H_{nk}}+\dots+\frac{H_{nd}}{H_{nk}}\right\}<+\infty\right)=1,
			\end{equation*}
			then, we obtain
			\begin{equation*}
				{\mathbb P}(0<Z_k\le1)={\mathbb P}(0<\lim_{n\to\infty}Z_{nk}\le1)=1, 
			\end{equation*}
			which indicates that $\mathbb P(Z_k=0)=1$.
			And, for all $k\in\{1,2,\dots,t\}$, we have
			\begin{equation*}
				{\mathbb P}(Z_k=1)={\mathbb P}\left(\cap_{j\neq k}\{Z_j=0\}\right)\le \min\{{\mathbb P}(Z_1=0), {\mathbb P}(Z_2=0)\}=0.
			\end{equation*}
		\end{proof}

		\begin{proof}[Proof of Theorem \ref{th6}]
			We first prove (i) of Theorem \ref{th6}. For all $j\in\{t+1,\dots,d\}$, by Corollary \ref{co1}, we have that $Z_{n1}=H_{n1}/S_n$ and $S_n/n$ converges almost surely to  $Z_1$ and $Nm_1$, respectively. By Corollary \ref{co2}, we have $\mathbb P(Z_1=0)=0$. And by Theorem \ref{th3}, we have $H_{nj}/H^{m_j/m_1}_{n1}$ converges almost surely to $\xi_{j1}$ with $\mathbb P(\xi_{j1}\in(0,\infty))=1$. Then, we obtain
			\begin{eqnarray*}
				n^{1-m_j/m_1}Z_{nj}=\frac{H_{nj}}{H^{m_j/m_1}_{n1}}\frac{H^{m_j/m_1}_{n1}}{S^{m_j/m_1}_n}\frac{n^{1-m_j/m_1}}{S^{1-m_j/m_1}_n}
				\overset{a.s.}{\to}\frac{\xi_{j1}Z^{m_j/m_1}_1}{\left(Nm_1\right)^{1-m_j/m_1}}\overset{a.s.}{=}\tilde{Z}_j\in(0,+\infty).
			\end{eqnarray*}
			
			For (ii) of Theorem \ref{th6}, we have that for all $j\in\{t+1,\dots,d\}$, 
			$$\frac{H_{nj}}{n^{m_j/m_1}}=\frac{H_{nj}}{H^{m_j/m_1}_{n1}}\frac{H^{m_j/m_1}_{n1}}{n^{m_j/m_1}}\overset{a.s.}{\to}(Nm_1Z_1)^{m_j/m_1}\xi_{j1}.$$
			
			For (iii) of Theorem \ref{th6}, we choose $p=2$, $\alpha_j=j^{1-m_k/m_1}$, $\beta_j=j^{m_k/m_1}$ and $Y_j=j^{1-m_k/m_1}X_{jk}$ in Lemma \ref{leB1}. Then, we have that when $n$ tends to $\infty$,
			\begin{eqnarray*}
				\beta_n\uparrow+\infty,\ \  \frac{1}{\beta_{n}}\sum\limits_{j=1}^{n}\frac{1}{\alpha_j}=\frac{1}{n^{m_k/m_1}}\sum\limits_{j=1}^{n}j^{m_k/m_1-1}\approx\frac{1}{n^{m_k/m_1}}\int_{1}^{n} x^{m_k/m_1-1} dx\to \frac{m_1}{m_k},
			\end{eqnarray*}
			and
			\begin{eqnarray*}
				\mathbb E(Y_j|{\cal F}_{j-1})=j^{1-m_k/m_1}\mathbb E(N_j|{\cal F}_{j-1})Z_{j-1,k}\overset{a.s.}{\to}
				\begin{cases}
					NZ_k, &\text {$k\in\{1,\dots,t\}$},\\
					N\tilde{Z}_k, &\text{ $k\in\{t+1,\dots,d\}$}.
				\end{cases}
			\end{eqnarray*}
			where for two functions $f(n)$ and $g(n)$ with respect to $n$, the expression $f(n)\approx g(n)$ denotes that $\lim\limits_{n\to\infty}f(n)/g(n)=1$. Moreover, we have
			\begin{eqnarray*}
				\sum_{j=1}^{\infty}\frac{\mathbb E(Y^2_j|{\cal F}_{j-1})}{(\alpha_j\beta_j)^2}=\sum_{j=1}^{\infty}\frac{j^{2-2m_k/m_1}\mathbb E(X^2_{jk}|{\cal F}_{j-1})}{j^2}=\sum\limits_{j=1}^{\infty}\frac{j^{1-m_k/m_1}\mathbb E(X^2_{jk}|{\cal F}_{j-1})}{j^{1+m_k/m_1}}.
			\end{eqnarray*}
			Note that
			\begin{eqnarray}\label{eqn13}
				\mathbb E(X^2_{jk}|{\cal F}_{j-1})&=&\mathbb E\left[\frac{N_jH_{j-1,k}(N_jH_{j-1,k}-N_j-H_{j-1,k}+S_{j-1})}{S_{j-1}(S_{j-1}-1)}|{\cal F}_{j-1}\right]\non\\	
				&\approx&\mathbb E(N^2_j-N_j|{\cal F}_{j-1})Z^2_{j-1,k}+Z_{j-1,k}\mathbb E\left(N_j|{\cal F}_{j-1}\right),
			\end{eqnarray}
			Then, we have
			\begin{equation*}
				j^{1-m_k/m_1}\mathbb E(X^2_{jk}|{\cal F}_{j-1})\overset{a.s.}{\to}\begin{cases}
					(Q-N)Z^2_k+NZ_k, &\text{$k\in\{1,\dots,t\}$},\\
					N\tilde{Z}_k, &\text{$k\in\{t+1,\dots,d\}$}.
				\end{cases}
			\end{equation*}
			Recall that $Z_k(k=1,\dots,t)$ is bounded, and for all $k\in\{t+1,\dots,d\}$, $\mathbb P(\tilde{Z}_{k}\in(0,+\infty)=1$, then we obtain
			\begin{equation*}
				\sum_{j=1}^{\infty}\frac{\mathbb E(Y^2_j|{\cal F}_{j-1})}{(\alpha_j\beta_j)^2}<+\infty,\ \ a.s..
			\end{equation*}
			Thus, by Lemma \ref{leB1}, we conclude
			\begin{equation*}
				\frac{N_{A_k,n}}{n^{m_k/m_1}}=\frac{1}{n^{m_k/m_1}}\sum\limits_{j=1}^{n}X_{jk}\overset{a.s.}{\to}\begin{cases}
					NZ_k, &\text{$k\in\{1,\dots,t\}$}\\
					\frac{m_1}{m_k}N\tilde{Z}_k, &\text{$k\in\{t+1,\dots,d\}$}.
				\end{cases}
			\end{equation*}
			Theorem \ref{th6} is proved.
		\end{proof}

		\begin{proof}[Proof of Theorem \ref{th4}]
			The almost sure convergence of $\hat{\mu}_n$, $\hat{q}_{N,n}$ and $\hat\nu_{nk}$ can be obtained directly from Lemma \ref{leB1}, where we choose $\alpha_j=1$ and $\beta_j=j$. Then, we prove the strong consistency of $\hat{m}_{A_k,n}$, $\hat{q}_{A_k,n}$ and $\tilde{q}_{A_kA_s,n}$ for $k,s$ such that $m_k+m_s>m_1$. We have
			\begin{eqnarray}\label{eqn11}
				\frac{1}{n^{m_k/m_1}}\sum\limits_{j=1}^{n}A_{jk}X_{jk}\overset{a.s.}{\to}
				\begin{cases}
					m_1NZ_k, &\text{$k\in\{1,\dots,t\}$},\\
					m_1N\tilde{Z}_k, &\text{$k\in\{t+1,\dots,d\}$}.
				\end{cases}
			\end{eqnarray}
			\begin{eqnarray}\label{eqn12}
				\frac{1}{n^{m_k/m_1}}\sum\limits_{j=1}^{n}A^2_{jk}X_{jk}\overset{a.s.}{\to}
				\begin{cases}
					q_kNZ_k, &\text{$k\in\{1,\dots,t\}$},\\
					\frac{m_1q_k}{m_k}N\tilde{Z}_k, &\text{$k\in\{t+1,\dots,d\}$}.
				\end{cases}
			\end{eqnarray}
			and 
			\begin{eqnarray}\label{eqnx12}
				\frac{1}{n^{m_k/m_1}}\sum\limits_{j=1}^{n}A_{jk}A_{js}X_{jk}\overset{a.s.}{\to}
				\begin{cases}
					q_{ks}NZ_k, &\text{$k\in\{1,\dots,t\}$},\\
					\frac{m_1q_{ks}}{m_k}N\tilde{Z}_k, &\text{$k\in\{t+1,\dots,d\}$}.
				\end{cases}
			\end{eqnarray}
			respectively. The conclusions (\ref{eqn11}), (\ref{eqn12}) and (\ref{eqnx12}) can be proved similarly to (\ref{eqn10}), and it is worth noting that the condition of finite 2+$\epsilon$ order moments of the replacement matrices are needed when verifying the conditions in Lemma \ref{leB1}, the details are omitted. Thus, the strong consistency of $\hat{m}_{A_k,n}$, $\hat{q}_{A_k,n}$ and $\tilde{q}_{A_kA_s,n}$ for $k,s$ such that $m_k+m_s>m_1$ are obtained by (\ref{eqn10}), (\ref{eqn11}), (\ref{eqn12}) and (\ref{eqnx12}).
			
			Next, we consider the strong consistency of $\hat{q}_{A_kA_s,n}$ for $k,s$ such that $m_k+m_s>m_1$. We will prove
			\begin{equation}\label{eqn14}
				\frac{1}{n^{(m_k+m_s-m_1)/m_1}}\sum\limits_{j=1}^{n}X_{jk}X_{js}\overset{a.s.}{\to}\begin{cases}
					(Q-N)Z_kZ_s,\ &\text{$1\le k<s\le t,$}\\
					\frac{m_1}{m_s}(Q-N)Z_k\tilde{Z}_s,\ &\text{$1\le k\le t<s\le d.$}\\
					\frac{m_1}{m_k+m_s-m_1}(Q-N)\tilde{Z}_k\tilde{Z}_s,\ &\text{$1\le t< k<s\le d.$}
				\end{cases}
			\end{equation}
			and
			\begin{equation}\label{eqn15}
				\frac{1}{n^{(m_k+m_s-m_1)/m_1}}\sum\limits_{j=1}^{n}A_{jk}A_{js}X_{jk}X_{js}\overset{a.s.}{\to}\begin{cases}
					(Q-N)q_{ks}Z_kZ_s,\ &\text{$1\le k<s\le t,$}\\
					\frac{m_1}{m_s}(Q-N)q_{ks}Z_k\tilde{Z}_s,\ &\text{$1\le k\le t<s\le d.$}\\
					\frac{m_1}{m_k+m_s-m_1}(Q-N)q_{ks}\tilde{Z}_k\tilde{Z}_s,\ &\text{$1\le t< k<s\le d.$}
				\end{cases}
			\end{equation}
			separately. Note that
			\begin{eqnarray*}
				\mathbb E(X_{jk}X_{js}|{\cal F}_{j-1})&=&-\mathbb E\left[\frac{N_j(S_{j-1}-N_j)}{S^2_{j-1}(S_{j-1}-1)}H_{j-1,k}H_{j-1,s}|{\cal F}_{j-1}\right]+\mathbb E(N^2_j|{\cal F}_{j-1})Z_{j-1,k}Z_{j-1,s}\\
				&&\approx\mathbb E(N^2_j-N_j|{\cal F}_{j-1})Z_{j-1,k}Z_{j-1,s}.
			\end{eqnarray*}
			For $1\le k<s\le t$, we have 
			\begin{equation}\label{eqn16}
				\frac{1}{n}\sum\limits_{j=1}^{n}X_{jk}X_{js}\overset{a.s.}{\to}(Q-N)Z_kZ_s.
			\end{equation}
			For $ t<s\le d$, we choose 
			\begin{equation*}
				Y_j=j^{1-m_k/m_1}j^{1-m_s/m_1}X_{jk}X_{js},\ \alpha_j=j^{2-m_k/m_1-m_s/m_1},\  \mbox{and}\ \  \beta_j=j^{(m_k+m_s-m_1)/m_1}
			\end{equation*}
			in Lemma \ref{leB1}, since $m_k+m_s>m_1$, we obtain
			\begin{align*}
				\frac{1}{\beta_n}\sum\limits_{j=1}^{n}\frac{1}{\alpha_j}&\to\begin{cases}
					\frac{m_1}{m_s},\ &\text{$1\le k\le t<s\le d,$}\\
					\frac{m_1}{m_k+m_s-m_1}, &\text{$1\le t<k<s\le d.$}
				\end{cases}\\
				\mathbb E(Y_j|{\cal F}_{j-1})&\overset{a.s.}{\to}
				\begin{cases}
					(Q-N)Z_k\tilde{Z}_s,\ &\text{$1\le k\le t<s\le d,$}\\
					(Q-N)\tilde{Z}_k\tilde{Z}_s,\ &\text{$1\le t<k<s\le d.$}
				\end{cases}
			\end{align*}
			and
			\begin{eqnarray*}
				\sum_{j=1}^{\infty}\frac{\mathbb E(Y^2_j|{\cal F}_{j-1})}{(\alpha_j\beta_j)^2}&=&\sum\limits_{j=1}^\infty \frac{j^{4-2(m_k+m_s)/m_1}\mathbb E(X^2_{jk}X^2_{js}|{\cal F}_{j-1})}{j^2}\\
				&\le& C^2_1\sum\limits_{j=1}^\infty\frac{ j^{2-(m_k+m_s)/m_1}\mathbb E(X_{jk}X_{js}|{\cal F}_{j-1})}{j^{(m_k+m_s)/m_1}} <+\infty,\ \ a.s..
			\end{eqnarray*}
			Thus, when $m_k+m_s>m_1$, we obtain
			\begin{equation}\label{eqn17}
				\frac{1}{n^{(m_k+m_s-m_1)/m_1}}\sum\limits_{j=1}^{n}X_{jk}X_{js}\overset{a.s.}{\to}\begin{cases}
					\frac{m_1}{m_s}(Q-N)Z_k\tilde{Z}_s,\ &\text{$1\le k\le t<s\le d.$}\\
					\frac{m_1}{m_k+m_s-m_1}(Q-N)\tilde{Z}_k\tilde{Z}_s,\ &\text{$1\le t< k<s\le d.$}
				\end{cases}
			\end{equation}
			Equation (\ref{eqn14}) is obtained by (\ref{eqn16}) and (\ref{eqn17}). Equation (\ref{eqn15}) can be obtained by the same method. Then, Theorem \ref{th4} is proved.
		\end{proof}

		\begin{proof}[Proof of Theorem \ref{thm5}]
			Define ${\bf Y}_{ni}=\left(\frac{X_{i1}(A_{i1}-m_{i1})}{\sqrt{n}},
			\dots,\frac{X_{id}(A_{id}-m_{id})}{\sqrt{n^{m_d/m_1}}}\right)^{\top}$, 
			we first prove that there exists a symmetric random matrix $\tilde{\Sigma}$ such that
			\begin{equation}\label{eqn18}
				\sum\limits_{i=1}^{n}{\bf Y}_{ni}\overset{stable}{\to}{\cal N}(0,\tilde{\Sigma}).
			\end{equation}
			We set ${\cal F}_{n,i-1}={\cal F}_{i-1}$. Note that $\{{\bf Y}_{ni},i=1,\dots,n\}$ is an array of martingale differences. By Corollary 3.1 of \cite{r46}, it is sufficient to show that for all $\epsilon>0$,
			\begin{equation}\label{eqn19}
				\sum\limits_{i=1}^{n}\mathbb E[\|{\bf Y}_{ni}\|^2I\{\|{\bf Y}_{ni}\|\ge \epsilon\}|{\cal F}_{n,i-1}]\overset{p}{\to}0,\ \ \ {\rm and}
			\end{equation}
			\begin{equation}\label{eqn20}
				\sum\limits_{i=1}^{n}\mathbb E({\bf Y}_{ni}^{T}{\bf Y}_{ni}|{\cal F}_{n,i-1})\overset{{\mathbb P}}{\to}\tilde{\Sigma}.
			\end{equation}
			For (\ref{eqn19}), we have
			\begin{eqnarray*}
				&&\sum\limits_{i=1}^{n}\mathbb E[\|{\bf Y}_{ni}\|^2 I(\|{\bf Y}_{ni}\|\ge\epsilon)|{\cal F}_{n,i-1}]\\
				&=&\sum\limits_{j=1}^{d}\sum\limits_{i=1}^{n}\mathbb E[Y^{2}_{ni,j} I(\|{\bf Y}_{ni}\|\ge\epsilon)|{\cal F}_{n,i-1}]
				\le\sum\limits_{j=1}^{d}\sum\limits_{k=1}^{d}\sum\limits_{i=1}^{n}\mathbb E[Y^{2}_{ni,j}  I(|Y_{ni,k}|\ge\epsilon/\sqrt{d})|{\cal F}_{n,i-1}],
			\end{eqnarray*}
			thus, it remains only to show that $j,k\in\{1,2,\dots,d\}$,
			\begin{equation}\label{eqn21}
				\sum\limits_{i=1}^{n}\mathbb E[Y^{2}_{ni,j}  I(|Y_{ni,k}|\ge\epsilon/\sqrt{d})|{\cal F}_{n,i-1}]\overset{p}{\to}0.
			\end{equation}
			Define $\tilde\epsilon=\epsilon/(C_1\sqrt{d})$, then we have
			\begin{align}\label{eqn22}
				&\sum\limits_{i=1}^{n}\mathbb E[Y^{2}_{ni,j}  I(|Y_{ni,k}|\ge\epsilon/\sqrt{d})|{\cal F}_{n,i-1}]\non\\
				\le&\frac{1}{n^{m_j/m_1}}\sum\limits_{i=1}^{n}\mathbb E(X^2_{ij}|{\cal F}_{n,i-1})\mathbb E\left\{(A_{ij}-m_{ij})^2I\left[|A_{ik}-m_{ik}|\ge \tilde\epsilon\sqrt{n^{m_k/m_1}}\right]\right\}\non\\
				\le&\frac{1}{n^{m_j/m_1}}\sum\limits_{i=1}^{n}\mathbb E(X^2_{ij}|{\cal F}_{n,i-1})\sup_{i\ge1}\mathbb E\left\{(A_{ij}-m_{ij})^2I\left[|A_{ik}-m_{ik}|\ge \tilde\epsilon\sqrt{n^{m_k/m_1}}\right]\right\}.
			\end{align}
			By (\ref{eqn13}), we have that for all $j\in\{1,\dots,d\}$,
			\begin{equation*}\label{eqn23}
				\frac{1}{n^{m_j/m_1}}\sum\limits_{i=1}^{n}\mathbb E(X^2_{ij}|{\cal F}_{n,i-1})=\frac{1}{n^{m_j/m_1}}\sum\limits_{i=1}^{n}[\mathbb E(N^2_i-N_i|{\cal F}_{n,i-1})Z^2_{i-1,j}+Z_{i-1,j}\mathbb E(N_i|{\cal F}_{n,i-1})].
			\end{equation*}
			Since that for all $1\le j\le t$, $Z_{i-1,j}\overset{a.s.}{\to}{Z_j}$ when $i$ tends to $\infty$, by Ces\`{a}ro summation, we have
			\begin{equation*}
				\frac{1}{n^{m_j/m_1}}\sum\limits_{i=1}^{n}\mathbb E(X^2_{ij}|{\cal F}_{n,i-1})\overset{a.s.}{\to}(Q-N)Z^2_j+NZ_j.
			\end{equation*}
			When $t<j\le d$, by Theorem \ref{th6}, we have $i^{1-m_j/m_1}Z_{i-1,j}\overset{a.s.}{\to}\tilde{Z}_j$ when $i$ tends to $\infty$. In Lemma \ref{leB2}, we choose 
			\begin{equation*}
				Y_i=i^{1-m_j/m_1}Z_{i-1,j}\mathbb E(N_i|{\cal F}_{n,i-1})\overset{a.s.}{\to}N\tilde{Z}_j,\ a_i=i^{m_j/m_1-1},\ b_n=\sum\limits_{i=1}^{n}a_i\approx \frac{m_1}{m_j}n^{m_j/m_1},
			\end{equation*}
			then we have
			\begin{equation}\label{eqn24}
				\frac{1}{n^{m_j/m_1}}\sum\limits_{i=1}^{n}\mathbb E(X^2_{ij}|{\cal F}_{n,i-1})\approx	\frac{1}{n^{m_j/m_1}}\sum\limits_{i=1}^{n}Z_{i-1,j}\mathbb E(N_i|{\cal F}_{i-1})\overset{a.s.}{\to}\frac{m_1}{m_q}N\tilde{Z}_j,
			\end{equation}
			Note that 
			\begin{equation}\label{eqn25}
				\sup_{i\ge1}\mathbb E\left\{(A_{ij}-m_{ij})^2I\left[|A_{ik}-m_{ik}|\ge \tilde\epsilon\sqrt{n^{m_k/m_1}}\right]\right\}\to0,
			\end{equation}
			by (\ref{eqn22})--(\ref{eqn25}), we conclude (\ref{eqn21}) holds, it follows that (\ref{eqn19}) holds. 
			
			Next, we consider (\ref{eqn20}). Define ${\bf Q}_{ni}=\sum_{i=1}^{n}\mathbb E({\bf Y}_{ni}^{T}{\bf Y}_{ni}|{\cal F}_{n,i-1})$, we consider separately the diagonal and non-diagonal elements of ${\bf Q}_{ni}$. By (\ref{eqn13}), the $q$th diagonal element of ${\bf Q}_{ni}$ equals to
			\begin{eqnarray*}
				({\bf Q}_{ni})_{q,q}&=&\frac{1}{n^{m_q/m_1}}\sum\limits_{i=1}^{n}\mathbb E(X^2_{iq}|{\cal F}_{n,i-1})\mathbb E(A_{iq}-m_{iq})^2\\
				&\approx&\frac{1}{n^{m_q/m_1}}\sum\limits_{i=1}^{n}\sigma^2_{q,i}\left[\mathbb E(N^2_i-N_i|{\cal F}_{n,i-1})Z^2_{i-1,q}+Z_{i-1,q}\mathbb E(N_i|{\cal F}_{n,i-1})\right],
			\end{eqnarray*}
			When $q=1,\dots,t$, by Theorem \ref{th1}, we have $Z_{i-1,q}$ converges to $Z_q$ almost surely when $i$ tends to infty. Note that $\sigma^2_{q,i}$ converges to $\sigma^2_{q}$, thus, by Ces\`{a}ro summation, we have
			\begin{equation}\label{eqn26}
				({\bf Q}_{ni})_{q,q}\overset{a.s.}{\to}[(Q-N)Z^2_q+NZ_q]\sigma^2_q,\ \ q=1,\dots,t.
			\end{equation}
			When $q=t+1,\dots,d$, in Lemma \ref{leB2}, we choose
			\begin{equation*}
				Y_i=i^{1-m_q/m_1}Z_{i-1,q}\mathbb E(N_i|{\cal F}_{n,i-1})\sigma^2_{q,i}\overset{a.s.}{\to}N\tilde{Z}_q\sigma^2_q,\ a_i=i^{m_q/m_1-1},\ b_n=\sum\limits_{i=1}^{n}a_i\approx \frac{m_1}{m_q}n^{m_q/m_1},
			\end{equation*}
			then we have
			\begin{equation}\label{eqn27}
				({\bf Q}_{ni})_{q,q}\approx\frac{1}{n^{m_q/m_1}}\sum\limits_{i=1}^{n}Z_{i-1,q}\mathbb E(N_i|{\cal F}_{n,i-1})\sigma^2_{q,i}\overset{a.s.}{\to}\frac{m_1}{m_q}N\tilde{Z}_q\sigma^2_q,\ q=t+1,\dots,d.
			\end{equation}
			
			Next we consider the nondiagonal elements of ${\bf Q}_{ni}$. For $1\le p<q\le d$,
			\begin{eqnarray*}
				&&({\bf Q}_{ni})_{p,q}=\sum\limits_{i=1}^{n}\mathbb E\left(\frac{X_{ip}X_{iq}(A_{ip}-m_{ip})(A_{iq}-m_{iq})}{\sqrt{n^{m_p/m_1}n^{m_q/m_1}}}|{\cal F}_{n,i-1}\right)\\
				&=&\frac{1}{\sqrt{n^{m_p/m_1}n^{m_q/m_1}}}\sum\limits_{i=1}^{n}c_{pq,i}\mathbb E(X_{ip}X_{iq}|{\cal F}_{n,i-1})\\
				&=&\frac{1}{\sqrt{n^{m_p/m_1}n^{m^q/m_1}}}\sum\limits_{i=1}^{n}c_{pq,i}\mathbb E\left[-\frac{N_iH_{i-1,p}H_{i-1,q}(S_{i-1}-N_i)}{S^2_{i-1}(S_{i-1}-1)}+N^2_iZ_{i-1,p}Z_{i-1,q}|{\cal F}_{n,i-1}\right]\\
				&\approx&\frac{1}{\sqrt{n^{m_p/m_1}n^{m_q/m_1}}}\sum\limits_{i=1}^{n}c_{pq,i}\mathbb E(-N_iZ_{i-1,p}Z_{i-1,q}+N^2_iZ_{i-1,p}Z_{i-1,q}|{\cal F}_{n,i-1})\\
				&=&\frac{1}{\sqrt{n^{m_p/m_1}n^{m_q/m_1}}}\sum\limits_{i=1}^{n}c_{pq,i}Z_{i-1,p}Z_{i-1,q}\mathbb E(N^2_i-N_i|{\cal F}_{n,i-1}),
			\end{eqnarray*}
			When $1\le p<q\le t\le d$, by Ces\`{a}ro summation, we have
			\begin{equation}\label{eqn28}
				({\bf Q}_{ni})_{p,q}\overset{a.s.}{\to}c_{pq}Z_pZ_q(Q-N).
			\end{equation}
			When $t<q\le d$, we have
			\begin{equation}\label{eqn29}
				({\bf Q}_{ni})_{p,q}\approx \sqrt{\frac{n^{m_q/m_1}}{n^{m_p/m_1}}}\frac{1}{n^{mq/m_1}}\sum\limits_{i=1}^{n}c_{pq,i}Z_{i-1,p}Z_{i-1,q}\mathbb E(N^2_i-N_i|{\cal F}_{n,i-1}).
			\end{equation}
			Note that when $1\le p\le t<q\le d$, we have $\sqrt{\frac{n^{m_q/m_1}}{n^{m_p/m_1}}}\to0$. In Lemma \ref{leB2}, we choose
			\begin{equation*}
				a_{i}=i^{m_q/m_1-1},\ \  b_n=\sum\limits_{i=1}^{n}a_i\approx \frac{m_1}{m_q}n^{m_q/m_1},
			\end{equation*}
			and
			\begin{equation*}
				Y_i=i^{1-m_q/m_1}c_{pq,i}Z_{i-1,p}Z_{i-1,q}\mathbb E(N^2_i-N_i|{\cal F}_{i-1})\overset{a.s.}{\to}\begin{cases}
					\frac{m_1}{m_q}c_{pq}Z_p\tilde{Z}_q(Q-N),\ &\text{$1\le p\le t<q\le d,$}\\
					0,  &\text{$1\le t<p<q\le d.$}
				\end{cases}
			\end{equation*}
			then we obtain
			\begin{equation}\label{eqn31}
				({\bf Q}_{ni})_{p,q}\overset{a.s.}{\to}0,\ t<q\le d.
			\end{equation}
			By (\ref{eqn26})--(\ref{eqn31}), we conclude (\ref{eqn18}) holds. Moreover, the covariance matrix is
			$$\tilde{\Sigma}=    \footnotesize{
				\begin{pmatrix}
					\sigma^{2}_1[Z^2_1(Q-N)+Z_1N]&\dots&c_{1t}Z_1Z_t(Q-N)&0&\dots&0\\
					\vdots&\ddots&\vdots&\vdots&\ddots&\vdots\\
					c_{1t}Z_1Z_t(Q-N)&\dots&\sigma^{2}_t[Z^2_t(Q-N)+Z_tN]&0&\dots&0\\
					0&\dots&0&\frac{m_1}{m_{t+1}}\sigma^{2}_{t+1}\tilde{Z}_{t+1}N&\dots&0\\
					\vdots&\ddots&\vdots&\vdots&\ddots&\vdots\\
					0&\dots&0&0&\dots&\frac{m_1}{m_d}\sigma^{2}_{d}\tilde{Z}_{d}N
			\end{pmatrix}}.$$
			Recall the definition of $N_{A_k,n}$ and $\hat{m}_{A_k,n}(k=1,\dots,d)$, we have
			\begin{align}\label{eqn32}
				&\left(\sqrt{N_{A_1,n}}(\hat{m}_{A_1,n}-m_1),\dots,\sqrt{N_{A_d,n}}(\hat{m}_{A_d,n}-m_d)\right)^{\top}\non\\
				=&{\rm diag}\left\{\sqrt{\frac{n}{N_{A_1,n}}},\dots,\sqrt{\frac{n^{m_d/m_1}}{N_{A_d,n}}}\right\}\sum\limits_{i=1}^{n}{\bf Y}_{ni}+\left(\frac{\sum_{i=1}^{n}(m_{i1}-m_1)X_{i1}}{\sqrt{N_{A_1,n}}},\dots,\frac{\sum_{i=1}^{n}(m_{id}-m_d)X_{id}}{\sqrt{N_{A_d,n}}}\right)^{\top}.
			\end{align}
			By (\ref{eqn10}), we have 
			\begin{equation}\label{eqn33}
				\sqrt{\frac{n^{m_k/m_1}}{N_{A_k,n}}}\overset{a.s.}{\to} \begin{cases}
					\frac{1}{\sqrt{NZ_k}},\ &\text{$k\in\{1,\dots,t\}$},\\
					\frac{1}{\sqrt{m_1/m_kN\tilde{Z}_k}},\ &\text{$k\in\{t+1,\dots,d\}$}.
				\end{cases}
			\end{equation}
			For all $k\in\{1,2,\dots,d\}$,
			\begin{equation}\label{eqn34}
				\frac{\sum_{i=1}^{n}(m_{ik}-m_k)X_{ik}}{\sqrt{N_{A_k,n}}}=\frac{\frac{1}{n^{m_k/2m_1}}\sum_{i=1}^{n}(m_{ik}-m_k)X_{ik}}{\sqrt{\frac{N_{A_k,n}}{n^{m_k/m_1}}}},
			\end{equation}
			For the term $\frac{1}{n^{m_k/2m_1}}\sum_{i=1}^{n}(m_{ik}-m_k)X_{ik}$, in Lemma \ref{leB1}, we choose 
			\begin{equation*}
				\alpha_i=i^{m_k/(2m_1)-1},\ \ \beta_n=\sum_{i=1}^n\alpha_i\approx 2m_1/m_kn^{m_k/(2m_1)}\uparrow\infty,
			\end{equation*}
			and by Assumption \ref{as4}, we have
			\begin{equation*}
				Y_i=i^{1-m_k/(2m_1)}(m_{ik}-m_k)X_{ik},\ \ \mbox{then}\ \ \mathbb E(Y_i|{\cal F}_{i-1})=o(i^{1-m_k/m_1}Z_{i-1,k})\overset{a.s.}{\to}0.
			\end{equation*}
			Note that for all $p>1$, we have
			$$\sum_{i=1}^{n}\mathbb E\left(\frac{|Y_i|^p}{(\alpha_i\beta_i)^p}|{\cal F}_{i-1}\right)=O_{a.s.}(\sum_{i=1}^{n}i^{(1-p)m_k/m_1-1})<\infty,\ \ a.s.,\ \ \mbox{as}\ n\ \mbox{tends\ to}\ \infty.$$
			Then, we conclude
			\begin{equation}\label{eqn35}
				\frac{1}{n^{m_k/2m_1}}\sum_{i=1}^{n}(m_{ik}-m_k)X_{ik}\overset{a.s.}{\to}0.
			\end{equation}
			Thus, by Slutsky's Theorem and equations (\ref{eqn18}) and (\ref{eqn32})-(\ref{eqn35}), we conclude that Theorem \ref{thm5} holds.
		\end{proof}

		\begin{proof}[Proof of Corollary \ref{co4}]
			By Theorem \ref{thm5}, we have that
			\begin{equation*}
				\left(\sqrt{N_{A_k,n}}(\hat{m}_{A_k,n}-m_k),\sqrt{N_{A_{k+1},n}}(\hat{m}_{A_{k+1},n}-m_{k+1})\right)^{\top}\overset{stable}{\to}{\cal N}(0,\Sigma^{k,k+1}),
			\end{equation*}
			where $\Sigma^{k,k+1}$ is the submatrix consisting of the $(k,k+1)$st row and $(k,k+1)$st column of $\Sigma$ defined in Theorem \ref{thm5}. Note that
			\begin{eqnarray*}
				&&\hat{m}_{A_k,n}-\hat{m}_{A_{k+1},n}-(m_k-m_{k+1})\\
				&=&\left(1/\sqrt{N_{A_k},n},-1/\sqrt{N_{A_{k+1},n}}\right)\left(\sqrt{N_{A_k,n}}(\hat{m}_{A_k,n}-m_k),\sqrt{N_{A_{k+1},n}}(\hat{m}_{A_{k+1},n}-m_{k+1})\right)^{\top} 
			\end{eqnarray*}
			and 
			\begin{eqnarray*}
				&&\left(1/\sqrt{N_{A_k,n}},-1/\sqrt{N_{A_{k+1},n}}\right)\Sigma^{k,k+1}\left(1/\sqrt{N_{A_k,n}},-1/\sqrt{N_{A_{k+1},n}}\right)^{\top}\\
				&=&\left(1/\sqrt{N_{A_k,n}},-1\sqrt{N_{A_{k+1},n}}\right)\begin{pmatrix}  \Sigma^{k,k+1}_{11}&\Sigma^{k,k+1}_{12}\\ \Sigma^{k,k+1}_{21}& \Sigma^{k,k+1}_{22}\end{pmatrix}\left(1/\sqrt{N_{A_k,n}},-1/\sqrt{N_{A_{k+1},n}}\right)^{\top}\\
				&=&\begin{cases}
					\left(\frac{\sigma^2_k}{N_{A_k,n}}+\frac{\sigma^2_{k+1}}{N_{A_{k+1},n}}\right)T^{k,k+1}_n,\ &\text{$k+1\le t,$}\\
					\frac{\sigma^2_t[Z_t(Q/N-1)+1]}{N_{A_t,n}}+\frac{\sigma^2_{t+1}}{N_{A_{t+1},n}}\approx \frac{\sigma^2_t}{N_{A_t,n}}+\frac{\sigma^2_{t+1}}{N_{A_{t+1},n}},\ &\text{$k= t,$}\\
					\frac{\sigma^2_t}{N_{A_k,n}}+\frac{\sigma^2_{k+1}}{N_{A_{k+1},n}},\ &\text{$k>t.$}
				\end{cases}
			\end{eqnarray*}
			The Corollary \ref{co4} is proved.
		\end{proof}

		\begin{proof}[Proof of Corollary \ref{co5}]
			We define $\tilde\Sigma^{\ast}_n$ as a square matrix of $d-1$ dimensions, its diagonal elements are
			\begin{equation*}
				\left(\tilde\Sigma^{\ast}_n\right)_{q,q}=\frac{N_{A_{q+1},n}}{N_{A_q,n}}\Sigma_{qq}-2\sqrt{\frac{N_{A_{q+1},n}}{N_{A_q},n}}\Sigma_{q,q+1}+\Sigma_{q+1,q+1},
			\end{equation*}
			and the non-diagonal elements are
			\begin{eqnarray*}
				\left(\tilde\Sigma^{\ast}_n\right)_{p,q}=
				\sqrt{\frac{N_{A_{p+1},n}N_{A_{q+1},n}}{N_{A_p,n}N_{A_q,n}}}\Sigma_{pq}-\sqrt{\frac{N_{A_{q+1},n}}{N_{A_q,n}}}\Sigma_{p+1,q}-\sqrt{\frac{N_{A_{p+1},n}}{N_{A_p,n}}}\Sigma_{p,q+1}+\Sigma_{p+1,q+1}.
			\end{eqnarray*}
			By Theorem \ref{thm5}, we have 
			$$\left(\sqrt{N_{A_1,n}}(\hat{m}_{A_1,n}-m_1),\dots,\sqrt{N_{A_d,n}}(\hat{m}_{A_d,n}-m_d)\right)\overset{stable}\to 	\mathcal{N}(0,\Sigma).$$
			Define 
			$${\bf B}=\begin{pmatrix}
				\sqrt{\frac{N_{A_2,n}}{N_{A_1,n}}} & -1 & 0 & \cdots & 0 & 0\\
				0 & \sqrt{\frac{N_{A_3,n}}{N_{A_2,n}}} & -1 & \cdots & 0 & 0\\
				\vdots & \vdots & \vdots & \ddots & \vdots & \vdots\\
				0 & 0 & 0 & \cdots & \sqrt{\frac{N_{A_d,n}}{N_{A_{d-1},n}}} & -1
			\end{pmatrix}.$$
			Then, note that
			\begin{eqnarray*}
				&&\left(\sqrt{N_{A_2,n}}\left((\hat{m}_{A_1,n}-\hat{m}_{A_2,n})-(m_1-m_2)\right),\dots,\sqrt{N_{A_d,n}}\left((\hat{m}_{A_{d-1},n}-\hat{m}_{A_d,n})-(m_{d-1}-m_d) \right) \right)^{\top}\\
				&=&{\bf B}\left(\sqrt{N_{A_1,n}}(\hat{m}_{A_1,n}-m_1),\dots,\sqrt{N_{A_d,n}}(\hat{m}_{A_d,n}-m_d)\right)^{\top},
			\end{eqnarray*}
			and 
			${\bf B}\Sigma {\bf B}^{\top}=\tilde\Sigma^{\ast}_{n}$. Since $\tilde\Sigma^{\ast}_{n}-\Sigma^{\ast}_{n}\overset{a.s.}{\to}{\bf 0}$, we conclude Corollary \ref{co5} holds.
		\end{proof}

		\section{Technical results}\label{secA}
		In Lemma \ref{le1}, the number of balls of each type in the urn tends to infinity. In this section, we give some technical results, mainly to further obtain a higher order of $H_{nk}$ to inform its exact order.
		\begin{lemma}\label{leA2}
			Under Assumptions \ref{as1}-\ref{as2}, for all $k\in\{1,\dots,t\}$ and $j\in\{1,\dots,k-1,k+1,\dots,d\}$, we have that for all $\theta_j> \frac{m_1}{m_j}$, 
			$$\frac{H_{nk}}{H^{\theta_j}_{nj}}\overset{a.s.}{\to}0.$$
		\end{lemma}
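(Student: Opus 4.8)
The plan is to reduce the statement, via an intermediate exponent, to the almost sure convergence of a nonnegative supermartingale. Fix $k\in\{1,\dots,t\}$, an index $j\neq k$, and $\theta_j>m_1/m_j$. Since $k\le t$, Assumption \ref{as2} gives $m_k=m_1\ge m_j$, so $m_1/m_j=m_k/m_j\ge 1$. Choose an intermediate exponent $\theta_j'\in(m_1/m_j,\theta_j)$ and set $G_n=H_{nk}/H_{nj}^{\theta_j'}$, which is nonnegative and $\mathcal{G}_n$-measurable. If $G_n$ converges almost surely to a finite limit, then because $H_{nj}\overset{a.s.}{\to}\infty$ by Lemma \ref{le1} and $\theta_j-\theta_j'>0$,
\[
\frac{H_{nk}}{H_{nj}^{\theta_j}}=G_n\,H_{nj}^{-(\theta_j-\theta_j')}\overset{a.s.}{\to}0,
\]
which is exactly the assertion. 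So it suffices to prove the almost sure convergence of $G_n$ to a finite limit.

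To this end I would mimic the supermartingale computation in the proof of Theorem \ref{th2}. Writing $G_{n+1}/G_n=\bigl(1+A_{n+1,k}X_{n+1,k}/H_{nk}\bigr)\bigl(1+A_{n+1,j}X_{n+1,j}/H_{nj}\bigr)^{-\theta_j'}$ and using $(1+x)^{-\theta_j'}\le 1-\theta_j'x+\tfrac{\theta_j'(\theta_j'+1)}{2}x^2$ for $x\ge 0$, I would expand and take the conditional expectation given $\mathcal{G}_n$. The leading term is
\[
\frac{N_{n+1}}{S_n}\bigl(m_{n+1,k}-\theta_j'm_{n+1,j}\bigr),
\]
whose limiting factor $m_1-\theta_j'm_j<0$ is strictly negative (as $\theta_j'>m_1/m_j$); hence by Assumption \ref{as2} this term is bounded above by $-\delta\,N_{n+1}/S_n$ for some $\delta>0$ and all large $n$. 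The remaining contributions are treated as errors: the quadratic term is controlled with $X_{n+1,j}^2\le C_1 X_{n+1,j}$ and the bounded second moment $\sup_n\mathbb{E}(A_{n+1,j}^2)$ from Assumption \ref{as1}, giving a bound of order $N_{n+1}/(S_nH_{nj})=o(N_{n+1}/S_n)$ since $H_{nj}\to\infty$; the cross term carries a factor $1/(H_{nk}H_{nj})$ and, using $\mathbb{E}(X_{n+1,k}X_{n+1,j}\mid\mathcal{G}_n)\lesssim N_{n+1}^2 H_{nk}H_{nj}/S_n^2$, is of order $1/S_n^2$. Combining these, for all large $n$,
\[
\mathbb{E}\bigl(G_{n+1}/G_n-1\mid\mathcal{G}_n\bigr)\le -\frac{\delta}{2}\frac{N_{n+1}}{S_n}+O\!\left(\frac{1}{S_n^2}\right)\le 0,\quad\text{a.s.},
\]
so that $\mathbb{E}(G_{n+1}-G_n\mid\mathcal{G}_n)=G_n\,\mathbb{E}(G_{n+1}/G_n-1\mid\mathcal{G}_n)\le 0$. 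Thus $G_n$ is eventually a nonnegative supermartingale and converges almost surely to a finite random variable, which completes the argument.

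The main obstacle is the bookkeeping in this drift estimate: one must verify that every error term produced by the expansion is genuinely dominated by the strictly negative leading drift $-\delta N_{n+1}/S_n\asymp -\delta/n$. The decisive point is that each error carries an extra vanishing factor---either $1/H_{nj}\to 0$ (from Lemma \ref{le1}) or an additional $1/S_n\asymp 1/n$---so that, in contrast with Theorem \ref{th3}, no delicate cancellation of the kind supplied by Assumption \ref{as3} is required; the strict inequality $\theta_j'>m_1/m_j$ by itself produces the needed margin. A secondary technical point is that the supermartingale inequality only holds after an almost surely finite random time, so the convergence should be justified exactly as in the proof of Theorem \ref{th2}, for instance by localizing at the stopping time beyond which the drift inequality is valid.
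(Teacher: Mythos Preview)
Your approach is essentially identical to the paper's: the paper also shows that $H_{nk}/H_{nj}^{\theta}$ is eventually a nonnegative supermartingale for any $\theta>m_k/m_j$ (via the same Taylor/drift computation you describe), and then obtains the limit $0$ by the same intermediate-exponent trick, only inserting the auxiliary $\theta_j'$ at the end rather than at the start. One small correction to your bookkeeping: the product $(1+a)(1-\theta_j'b+cb^2)$ also produces the term $c\,ab^2$, whose conditional expectation involves $\mathbb{E}(A_{n+1,k}A_{n+1,j}^2)$ and thus uses the third-moment part of Assumption~\ref{as1}, not merely the second moments you cite; the paper invokes this explicitly.
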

		\begin{proof}
			Define $R_{n}=H_{nk}/H^{\theta_j}_{nj}$, then for all $\theta_j> m_1/m_j$, we have
			\begin{eqnarray*}
				R_{n+1}-R_n&=&\frac{H_{n+1,k}}{H^{\theta_j}_{n+1,j}}-\frac{H_{nk}}{H^{\theta_j}_{nj}}=\frac{H_{n+1,k}}{H^{\theta_j}_{nj}}-\frac{H_{nk}}{H^{\theta_j}_{nj}}-\left(\frac{H_{n+1,k}}{H^{\theta_j}_{n+1,j}}-\frac{H_{n+1,k}}{H^{\theta_j}_{nj}}\right)\\
				&=&\frac{A_{n+1,k}X_{n+1,k}}{H^{\theta_j}_{nj}}+(H_{nk}+A_{n+1,k}X_{n+1,k})\left(\frac{1}{(H_{nj}+A_{n+1,j}X_{n+1,j})^{\theta_j}}-\frac{1}{H^{\theta_j}_{nj}}\right).
			\end{eqnarray*}
			By the Taylor expansion of the function $f(x)=1/(x+a)^{\theta_j}$, we have 
			$$\frac{1}{(H_{nj}+A_{n+1,j}X_{n+1,j})^{\theta_j}}-\frac{1}{H^{\theta_j}_{nj}}\le -\frac{\theta_j}{H^{\theta_j+1}_{nj}}A_{n+1,j}X_{n+1,j}+\frac{\theta_j(\theta_j+1)}{2H^{\theta_j+2}_{nj}}A^2_{n+1,j}X^2_{n+1,j},$$
			Thus, define $C_3=\theta_j(\theta_j+1)$, we have
			\begin{align}\label{eqqq1}
				&\mathbb E(R_{n+1}-R_n|{\cal G}_n)\non\\
				\le&\frac{H_{nk}}{H^{\theta_j}_{nj}}\left[\frac{m_{n+1,k}N_{n+1}}{S_n}-\frac{\theta_jm_{n+1,j}N_{n+1}}{S_n}+\mathbb E\left(\frac{C_3}{2H^2_{nj}}A^2_{n+1,j}X^2_{n+1,j}|{\cal G}_n\right)\right.\non\\
				&\left.\quad\quad\quad+\mathbb E\left(\frac{C_3}{2H^2_{nj}H_{nk}}A_{n+1,k}A^2_{n+1,j}X_{n+1,k}X^2_{n+1,j}|{\cal G}_n\right)\right]\non\\
				\le&\frac{H_{nk}}{H^{\theta_j}_{nj}}\left\{\frac{N_{n+1}}{S_n}(m_{n+1,k}-\theta_jm_{n+1,j})+\frac{C^2_1C_3}{2}\frac{N_{n+1}}{H_{nj}S_n}\left[\mathbb E(A^2_{n+1,j})+\mathbb E(A_{n+1,k}A^2_{n+1,j})\right]\right\}\non\\
				=&\frac{H_{nk}}{H^{\theta_j}_{nj}}\frac{N_{n+1}}{S_n}\left[m_{n+1,k}-\theta_jm_{n+1,j}+O\left(\frac{1}{H_{nj}}\right)\right]\non
			\end{align}
			where the last euqation is due to that the replacement matrices have finite third order moments. Note that $\theta_j>m_k/m_j$, thus, by Assumption \ref{as1} and Lemma \ref{le1}, we have that for large enough $n$, $\mathbb E(R_{n+1}-R_n|{\cal G}_n)\le0$ a.s., then $R_{n}$ is eventually a supermartingale. By Theorem 2.5 of \cite{r46}, we obtain that $H_{nk}/H^{\theta_j}_{nj}$ converges a.s. to a finite random variable. Since $H_{nj}(i\in\{1,2,\dots,d\})$ tends to infinity a.s., for all $\theta_j>m_k/m_j$, and $\delta>0$, we have $H_{nk}/H^{\theta_j+\delta}_{nj}$ converges a.s. to 0. Lemma \ref{leA2} is proved due to the arbitrariness of $\delta$. 
		\end{proof}

		\begin{lemma}\label{leA3}
			We assume that Assumptions \ref{as1}-\ref{as2} and condition (\ref{eqn3}) hold. When $t=1$, we have that
			\begin{equation}\label{eq251}
				H^{-1}_{n1}=O_{a.s.}(n^{-1}),\ \ H^{-1}_{nk}=O_{a.s.}(n^{-\gamma_k})
			\end{equation}
			hold for all $k\in\{2,\dots,d\}$ and $0<\gamma_k<m_k/m_1$.
			When $t>1$, we have
			\begin{equation}\label{eq252}
				H^{-1}_{nk}=O_{a.s.}(n^{-\gamma_k}),
			\end{equation}
			hold for all $k\in\{1,\dots,d\}$ and $0<\gamma_k<m_k/m_1$.
		\end{lemma}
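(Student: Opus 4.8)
The plan is to obtain, for each color $k$, a polynomial lower bound on $H_{nk}$ by controlling the ratio $R_n=S_n/H_{nk}^{\theta}$ for a suitable exponent $\theta>m_1/m_k$, showing that $R_n$ is eventually a nonnegative supermartingale, and then reading off the bound from the deterministic estimate $S_n\ge n+1$ established in (\ref{eqqx5}). The case distinction in the statement reflects only the sharper rate available for the leading colors when $t=1$.

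First I would dispatch the sharp rate $H_{n1}^{-1}=O_{a.s.}(n^{-1})$ in the case $t=1$. Here Theorem \ref{th2} reduces $\sum_{k=1}^{t}Z_{nk}\to1$ to $Z_{n1}\to1$, so $Z_1=1$ a.s.; combined with Corollary \ref{co1} (which needs condition (\ref{eqn3})), this gives $H_{n1}/n\overset{a.s.}{\to}Nm_1Z_1=Nm_1>0$, hence $H_{n1}^{-1}=O_{a.s.}(n^{-1})$. Note that condition (\ref{eqn3}) is used only at this point; the remaining bounds require only Assumptions \ref{as1}--\ref{as2}.

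For all the remaining bounds (every $k$ when $t>1$, and $k\ge2$ when $t=1$) I would run one unified argument, fixing $k$ and $\gamma_k\in(0,m_k/m_1)$ and setting $\theta=1/\gamma_k>m_1/m_k$. Mimicking the supermartingale computation in the proof of Lemma \ref{leA2}, I write $R_{n+1}-R_n$ using $H_{n+1,k}=H_{nk}+A_{n+1,k}X_{n+1,k}$ and the elementary bound $(1+x)^{-\theta}\le1-\theta x+\tfrac{\theta(\theta+1)}{2}x^2$ for $x\ge0$. Taking $\mathbb{E}(\cdot\mid\mathcal{G}_n)$ and using $\mathbf{R}_{n+1}\perp\mathcal{G}_n$, the first-order part equals $\frac{N_{n+1}}{H_{nk}^{\theta}}\big[\sum_{j}m_{n+1,j}Z_{nj}-\theta m_{n+1,k}\big]$. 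Since $\sum_j m_{n+1,j}Z_{nj}$ is a convex combination of the $m_{n+1,j}$ and $m_{n+1,1}=\max_j m_{n+1,j}$ for large $n$ by Assumption \ref{as2}, this bracket is at most $m_{n+1,1}-\theta m_{n+1,k}\to m_1-\theta m_k<0$. The second-order remainder is $O_{a.s.}\!\big(N_{n+1}H_{nk}^{-(\theta+1)}\big)$, bounded via the uniformly bounded third moments (Assumption \ref{as1}) together with $X_{n+1,k}\le N_{n+1}\le C_1$. Because $H_{nk}\overset{a.s.}{\to}\infty$ by Lemma \ref{le1}, the negative $H_{nk}^{-\theta}$ drift dominates the $H_{nk}^{-(\theta+1)}$ remainder, so $\mathbb{E}(R_{n+1}-R_n\mid\mathcal{G}_n)\le0$ for all large $n$; thus $R_n$ is eventually a nonnegative supermartingale and converges a.s.\ to a finite limit (Theorem 2.5 of \cite{r46}). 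Then $\limsup_n S_n/H_{nk}^{\theta}<\infty$ a.s., and $S_n\ge n+1$ from (\ref{eqqx5}) forces $H_{nk}\ge c(\omega)\,n^{1/\theta}$ eventually, i.e.\ $H_{nk}^{-1}=O_{a.s.}(n^{-\gamma_k})$. Intersecting over a countable dense set of exponents and over the finitely many $k$ delivers the claim for all $\gamma_k<m_k/m_1$ simultaneously.

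The hard part will be verifying the supermartingale inequality cleanly: confirming the correct sign of the leading drift (which is exactly what forces $\theta>m_1/m_k$ and relies on the ordering $m_1=\dots=m_t>m_{t+1}\ge\dots\ge m_d$ of Assumption \ref{as2}), and checking that the second-order Taylor remainder, after expanding $\sum_j A_{n+1,j}X_{n+1,j}$ and applying Hölder with the third-moment bound, is genuinely of order $H_{nk}^{-(\theta+1)}$ and hence asymptotically negligible against the $H_{nk}^{-\theta}$ drift. The reduction through $S_n\ge n+1$ and the choice $\theta=1/\gamma_k$ are then routine.
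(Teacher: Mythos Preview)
Your proposal is correct and takes a genuinely different route from the paper. The paper first establishes Lemma~\ref{leA2}, showing $H_{nk}/H_{nj}^{\theta_j}\to0$ for $k\le t$ and $\theta_j>m_1/m_j$ via a supermartingale argument on that ratio; then, in the proof of Lemma~\ref{leA3}, it bootstraps: for $t=1$ it combines Lemma~\ref{leA2} (with $k=1$) with $H_{n1}/n\to Nm_1$ from Corollary~\ref{co1}, while for $t>1$ it runs a pigeonhole argument (at least one of $H_{n1}/n,\dots,H_{nt}/n$ must exceed $m_1/(t+1)$) to cover the sample space by events $F_1,\dots,F_t$, and on each $F_\ell$ applies Lemma~\ref{leA2} with color $\ell$ in the numerator. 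Your approach replaces the numerator $H_{n\ell}$ by $S_n$ itself: the convex-combination bound $\sum_j m_{n+1,j}Z_{nj}\le m_{n+1,1}$ collapses the drift of $S_n/H_{nk}^{\theta}$ to the same $m_1-\theta m_k<0$ comparison without ever having to identify a dominant leading color, and the deterministic estimate $S_n\ge n+1$ from~(\ref{eqqx5}) replaces the probabilistic input from Corollary~\ref{co1}. This yields a single argument that covers every $k$ and every $t$ at once, eliminates the pigeonhole decomposition, and, as you observe, confines the use of condition~(\ref{eqn3}) to the sharp $H_{n1}^{-1}=O_{a.s.}(n^{-1})$ bound when $t=1$. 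The paper's route has the minor structural advantage of isolating Lemma~\ref{leA2} as a separate statement, though in the paper it is used only here.
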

		\begin{proof}
			When $t=1$, then by Theorem \ref{th2}, we have that $Z_1=1$ almost surely. Thus, by Corollary \ref{co1}, we have	
			$H_{n1}/n\overset{a.s.}{\to}Nm_1\ge m_1.$
			By Lemma \ref{leA2}, for all $k\in\{2,\dots,d\}$, we have that for all $\theta_k>m_1/m_k$, $H_{n1}/H^{\theta_k}_{nk}\overset{a.s.}{\to}0$. Then,
			$$m_1\lim\limits_{n\to\infty}\frac{n}{H^{\theta_k}_{nk}}\le\lim\limits_{n\to\infty}\frac{H_{n1}}{n}\frac{n}{H^{\theta_k}_{nk}}=\lim\limits_{n\to\infty}\frac{H_{n1}}{H^{\theta}_{nk}}=0,\ \ a.s..$$
			Thus, 
			$$\lim\limits_{n\to\infty}\frac{n}{H^{\theta_k}_{nk}}=0,\ \ a.s.,\ \ {\rm which\ implies}\ \ \lim\limits_{n\to\infty}\frac{n^{1/\theta_k}}{H_{nk}}=0,\ \ a.s..$$
			Define $\gamma_k=1/\theta_k$, recall that $\theta_k>m_1/m_k$, then $0<\gamma_k<m_k/m_1$, (\ref{eq251}) is proved.
			
			When $t>1$, we define
			$$F_1=\left\{\lim\limits_{n\to\infty}\frac{H_{n1}}{n}>\frac{m_1}{t+1}\right\},\ \  F_2=\left\{\lim\limits_{n\to\infty}\frac{H_{n2}}{n}>\frac{m_1}{t+1}\right\},\ \dots,\  F_t=\left\{\lim\limits_{n\to\infty}\frac{H_{nt}}{n}>\frac{m_1}{t+1}\right\},
			$$
			Next, we will show that 
			\begin{equation}\label{eq253}
				{\mathbb P}(F_1\cup F_2\cup\dots\cup F_{t})=1.
			\end{equation}
			We only to prove ${\mathbb P}(F^c_1\cap\dots\cap F^c_t)=0$. Actually, by Corollary \ref{co1}, we have that
			$\lim\limits_{n\to\infty}S_n/n=Nm_1\ge m_1$, $H_{nk}/n(k\in\{1,\dots,t\})$ converges a.s. surely to a random variable and $H_{nj}/n(j\in\{t+1,\dots,d\})$ converges a.s. to $0$. Then, we have that
			\begin{equation*}
				m_1\le\lim\limits_{n\to\infty}\frac{S_n}{n}=\lim\limits_{n\to\infty}\frac{H_{n1}}{n}+\dots+\lim\limits_{n\to\infty}\frac{H_{nd}}{n}=\lim\limits_{n\to\infty}\frac{H_{n1}}{n}+ \dots+\lim\limits_{n\to\infty}\frac{H_{nt}}{n}
			\end{equation*}
			holds a.s.. Thus, on $F^c_1$, there exists at least one $k\in\{2,\dots,t\}$, such that
			$$\lim\limits_{n\to\infty}\frac{H_{nk}}{n}\ge\frac{m_1-\frac{1}{t+1}m_1}{t-1}=\frac{t}{t^2-1}m_1$$
			holds almost suerly. However, on $F^c_k$, we have
			\begin{equation*}
				\lim\limits_{n\to\infty}\frac{H_{nk}}{n}\le\frac{1}{t+1}m_1<\frac{t}{t^2-1}m_1,
			\end{equation*}
			which contradicts the fact that $H_{nk}/n$ converges almost surely. Thus, ${\mathbb P}(F^c_1\cap\dots\cap F^c_t)=0$, the equation (\ref{eq253}) is obtained.
			
			Then, we only need to prove that for all $k\in\{1,\dots,t\}$, on $F_k$, for all $0<\gamma_j<m_j/m_1$, $H^{-1}_{nj}=O(n^{-\gamma_j})$ holds a.s.. Note that for all $k\in\{1,\dots,t\}$ and $j\in\{1,\dots,k-1,k+1,\dots,d\}$, by Lemma \ref{leA2}, we have that for all $\theta_j>m_1/m_j$, $H_{nk}/H^{\theta_j}_{nj}\overset{a.s.}{\to}0$ holds. On $F_k$,
			$$\frac{m_1}{t+1}\cdot\lim\limits_{n\to\infty}\frac{n}{H^{\theta_j}_{nj}}\le \lim\limits_{n\to\infty}\frac{H_{nk}}{n}\frac{n}{H^{\theta_j}_{nj}}=\lim\limits_{n\to\infty}\frac{H_{nk}}{H^{\theta_j}_{nj}}=0,\ \ a.s.,$$
			which indicates
			$$\lim\limits_{n\to\infty}\frac{n^{1/\theta_j}}{H_{nj}}=0,\ \ a.s..$$
			Define $\gamma_j=1/\theta_j$, then $0<\gamma_j<m_j/m_1$. By the arbitrariness of $k$ and $j$, we obtain that (\ref{eq252}) holds. Lemma \ref{leA3} is proved.
		\end{proof}
		
		\section{Some auxiliary results}\label{secB}
		\begin{lemma}[Corollary 2.3 in \cite{r46}]\label{LeB6}
			Let $\{Y_n,n\ge1\}$ be a sequence of random varibles such that $0\le Y_n\le 1$, and $\{{\cal F}_n,n\ge1\}$ be an increasing sequence of $\sigma$-fields such that each $Y_n$ is ${\cal F}_n$-measurable. Then $\sum_n Y_n<\infty$ a.s. if and only if $\sum_n\mathbb E(Y_n|{\cal G}_{n-1})<\infty$ a.s..
		\end{lemma}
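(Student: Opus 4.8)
The plan is to recognize this as the conditional (``second'') Borel--Cantelli statement and to prove it via the Doob decomposition together with an optional-stopping truncation. Write $S_n=\sum_{k=1}^n Y_k$ and $A_n=\sum_{k=1}^n\mathbb E(Y_k\mid{\cal F}_{k-1})$ (here the conditioning is on the preceding field of the filtration, since each $Y_k$ is ${\cal F}_k$-measurable); both sequences are nondecreasing and hence converge in $[0,\infty]$ to limits $S_\infty$ and $A_\infty$. Because $Y_k$ is ${\cal F}_k$-measurable while $\mathbb E(Y_k\mid{\cal F}_{k-1})$ is ${\cal F}_{k-1}$-measurable, the difference $M_n=S_n-A_n$ is a mean-zero martingale adapted to $\{{\cal F}_n\}$ with increments bounded in absolute value by $1$. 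The whole statement then reduces to showing $\{S_\infty<\infty\}=\{A_\infty<\infty\}$ up to a null set, and I would establish the two inclusions by a symmetric argument.

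For $\{A_\infty<\infty\}\subseteq\{S_\infty<\infty\}$, I would fix $b>0$ and introduce the stopping time $T_b=\inf\{n:A_{n+1}>b\}$, which is a genuine $\{{\cal F}_n\}$-stopping time precisely because $A_{n+1}$ is ${\cal F}_n$-measurable (predictable). Since the increments of $A$ are at most $1$, one has $A_{T_b}\le b+1$. Applying optional stopping to the bounded-time martingale $M_{n\wedge T_b}$ gives $\mathbb E(S_{n\wedge T_b})=\mathbb E(A_{n\wedge T_b})\le b+1$, and letting $n\to\infty$ by monotone convergence yields $\mathbb E(S_{T_b})\le b+1<\infty$, so $S_{T_b}<\infty$ almost surely. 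On the event $\{A_\infty\le b\}$ we have $T_b=\infty$, hence $S_{T_b}=S_\infty$, so $S_\infty<\infty$ a.s.\ on $\{A_\infty\le b\}$; a union over $b\in\mathbb N$ gives the inclusion.

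For the reverse inclusion I would exchange the roles of $S$ and $A$: set $\tau_b=\inf\{n:S_n>b\}$, which is a stopping time because $S_n$ is ${\cal F}_n$-measurable, and note $S_{\tau_b}\le b+1$ by the unit-bounded increments of $S$. The identical optional-stopping plus monotone-convergence argument gives $\mathbb E(A_{\tau_b})=\mathbb E(S_{\tau_b})\le b+1$, hence $A_{\tau_b}<\infty$ a.s.; on $\{S_\infty\le b\}$ we have $\tau_b=\infty$ and $A_{\tau_b}=A_\infty$, and a union over $b\in\mathbb N$ finishes the proof.

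The one delicate point, and the step I would be most careful about, is the uniform-in-$n$ optional-stopping identity $\mathbb E(S_{n\wedge T_b})=\mathbb E(A_{n\wedge T_b})$: this needs no integrability hypothesis beyond $Y_k\in[0,1]$, since for each fixed $n$ the stopping occurs at the bounded time $n\wedge T_b$, and the overshoot bounds $A_{T_b}\le b+1$ and $S_{\tau_b}\le b+1$ supplied by the unit-bounded increments are exactly what keep the limiting expectations finite and let monotone convergence pass to the limit.
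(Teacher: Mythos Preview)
Your argument is correct and is the standard proof of the conditional Borel--Cantelli lemma via the Doob decomposition and optional stopping at predictable (resp.\ adapted) first-passage times; the overshoot bounds $A_{T_b}\le b+1$ and $S_{\tau_b}\le b+1$ coming from $0\le Y_n\le 1$ are exactly what makes the monotone-convergence step go through, as you note. The paper itself does not supply a proof: the lemma is quoted in Appendix~\ref{secB} as an auxiliary result taken verbatim from Corollary~2.3 of Hall and Heyde~\cite{r46}, so there is nothing to compare your approach against. (Incidentally, you have silently and correctly fixed the typo in the statement: the conditioning $\sigma$-field should read ${\cal F}_{n-1}$, not ${\cal G}_{n-1}$.)
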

		
		\begin{lemma}[Theorem 2.17 in \cite{r46}, p.35]\label{LeB5}
			Let $\left\{S_n=\sum_{i=1}^nY_i,{\cal F}_n,n\ge1\right\}$ be a martingale and let $1\le p\le2$. Then $S_n$ converges a.s. on the set $\left\{\sum_{i=1}^\infty\mathbb E(|Y_i|^p|{\cal F}_{i-1})<\infty\right\}$.	
		\end{lemma}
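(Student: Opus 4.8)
The plan is to prove this classical martingale convergence criterion by a localization-plus-truncation argument that pairs the $L^2$ theory for the small increments with a conditional Borel--Cantelli control of the large increments. Write $V_n=\sum_{i=1}^n\mathbb E(|Y_i|^p\mid\mathcal F_{i-1})$, a nondecreasing predictable process, and let $A=\{V_\infty<\infty\}$ be the target set. First I would localize: for each integer $K\ge1$ define the stopping time $\tau_K=\inf\{n\ge0:V_{n+1}>K\}$, which is a stopping time because $V_{n+1}$ is $\mathcal F_n$-measurable. The stopped martingale $S^{(K)}_n=S_{n\wedge\tau_K}$ has differences $Y_i\mathbb I(i\le\tau_K)$, and since $\{i\le\tau_K\}\in\mathcal F_{i-1}$ one gets $\sum_{i=1}^\infty\mathbb E(|Y_i\mathbb I(i\le\tau_K)|^p\mid\mathcal F_{i-1})=V_{\tau_K}\le K$ almost surely. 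Because $\{\tau_K=\infty\}=\{V_\infty\le K\}$, we have $A=\bigcup_K\{\tau_K=\infty\}$, and $S_n=S^{(K)}_n$ on $\{\tau_K=\infty\}$; hence it suffices to show that each $S^{(K)}_n$ converges a.s., so I may assume outright that $V_\infty\le K$ a.s. for a fixed constant $K$.

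The main step is the truncation at level one. Set $Y_i'=Y_i\mathbb I(|Y_i|\le1)-\mathbb E(Y_i\mathbb I(|Y_i|\le1)\mid\mathcal F_{i-1})$ and $Y_i''=Y_i\mathbb I(|Y_i|>1)-\mathbb E(Y_i\mathbb I(|Y_i|>1)\mid\mathcal F_{i-1})$, so that both are $\mathcal F_i$-martingale differences and $Y_i=Y_i'+Y_i''$. For the large part, the elementary bound $|Y_i|\le|Y_i|^p$ on $\{|Y_i|>1\}$ (valid since $p\ge1$) gives $\sum_i\mathbb E(|Y_i|\mathbb I(|Y_i|>1)\mid\mathcal F_{i-1})\le V_\infty\le K$; in particular $\sum_i\mathbb P(|Y_i|>1\mid\mathcal F_{i-1})\le K$, so by conditional Borel--Cantelli there are only finitely many large jumps and $\sum_iY_i\mathbb I(|Y_i|>1)$ is eventually constant, while the compensator $\sum_i\mathbb E(Y_i\mathbb I(|Y_i|>1)\mid\mathcal F_{i-1})$ converges absolutely; thus $\sum_iY_i''$ converges a.s. For the small part, the bound $Y_i^2\le|Y_i|^p$ on $\{|Y_i|\le1\}$ (valid since $p\le2$) yields $\sum_i\mathbb E((Y_i')^2\mid\mathcal F_{i-1})\le V_\infty\le K$, so $\{\sum_{i\le n}Y_i'\}$ is an $L^2$-bounded martingale and converges a.s. by the standard $L^2$ martingale convergence theorem. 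Summing the two pieces shows $S_n$ converges a.s. on $\{V_\infty\le K\}$, and taking the union over $K$ completes the argument.

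The delicate point, and what makes the hypothesis $1\le p\le2$ exactly the right one, is the simultaneous exploitation of both endpoints: $p\ge1$ is precisely what controls the large jumps in $L^1$ through $|y|\le|y|^p$ for $|y|\ge1$, while $p\le2$ is precisely what controls the small jumps in $L^2$ through $y^2\le|y|^p$ for $|y|\le1$. The truncation is placed at level one so that these two opposing inequalities apply on the respective events. The only genuine care required is to verify that the centered truncations $Y_i'$ and $Y_i''$ are honest martingale differences and that the compensator of the large jumps is \emph{absolutely} summable rather than merely convergent; the latter again rests on $p\ge1$, so no additional hypothesis beyond $1\le p\le2$ enters.
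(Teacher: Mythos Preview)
Your proof is correct. The paper does not actually prove this lemma; it is quoted verbatim as Theorem~2.17 from Hall and Heyde~\cite{r46} and used as an auxiliary black box, so there is no ``paper's own proof'' to compare against. Your localization-plus-truncation argument is precisely the classical route (and essentially how Hall and Heyde themselves prove it): stop at $\tau_K$ to reduce to the case $V_\infty\le K$ a.s., then split each increment at level one so that $p\ge1$ handles the large jumps via $|y|\le|y|^p$ and conditional Borel--Cantelli, while $p\le2$ handles the small jumps via $y^2\le|y|^p$ and $L^2$ martingale convergence. All the measurability checks (predictability of $V_{n+1}$, that the centered truncations are genuine martingale differences, absolute summability of the large-jump compensator) are in order.
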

		
		\begin{lemma}[An extension of Toeplitz Lemma in \cite{r46}, p.31]\label{leB2}
			Let $\{a_i,i\ge1\}$ be positive numbers and $\{Y_i,i\ge1\}$ be real random variables. If $b_n=\sum_{i=1}^{n}a_i\uparrow+\infty$, then $Y_i\overset{a.s.}{\to}X$ ensures that $b^{-1}_n\sum_{i=1}^{n}a_iY_i\overset{a.s.}{\to}X$.
		\end{lemma}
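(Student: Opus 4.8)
The plan is to recognize this as the classical Toeplitz averaging lemma and to reduce it to a purely deterministic statement that holds pointwise on a probability-one event. Since $Y_i\overset{a.s.}{\to}X$, there is an event $\Omega_0$ with $\mathbb P(\Omega_0)=1$ on which $Y_i(\omega)\to X(\omega)$ with $X(\omega)$ finite. I would fix such an $\omega$ and suppress it from the notation, so that the whole argument becomes a statement about the convergence of numerical sequences; establishing the conclusion for each $\omega\in\Omega_0$ then yields almost-sure convergence.

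The key algebraic observation is that the weights are self-normalizing: because $b_n=\sum_{i=1}^{n}a_i$, we have $b_n^{-1}\sum_{i=1}^{n}a_i=1$, and therefore
$$b_n^{-1}\sum_{i=1}^{n}a_iY_i-X=b_n^{-1}\sum_{i=1}^{n}a_i(Y_i-X).$$
First I would fix $\epsilon>0$ and choose $N$ so large that $|Y_i-X|<\epsilon$ for all $i>N$. Splitting the sum at $N$ and using $a_i>0$ together with $b_n^{-1}\sum_{i=N+1}^{n}a_i\le b_n^{-1}\sum_{i=1}^{n}a_i=1$ in the tail block gives
$$\Bigl|b_n^{-1}\sum_{i=1}^{n}a_iY_i-X\Bigr|\le b_n^{-1}\sum_{i=1}^{N}a_i|Y_i-X|+\epsilon.$$

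The final step is to let $n\to\infty$ with $N$ held fixed. The first term on the right has a fixed numerator $\sum_{i=1}^{N}a_i|Y_i-X|$ and denominator $b_n\uparrow+\infty$, so it tends to $0$; this is exactly where the hypothesis $b_n\uparrow+\infty$ enters. Hence $\limsup_{n}|b_n^{-1}\sum_{i=1}^{n}a_iY_i-X|\le\epsilon$, and since $\epsilon>0$ is arbitrary the limit equals $X$. As this runs for every $\omega\in\Omega_0$, the convergence $b_n^{-1}\sum_{i=1}^{n}a_iY_i\overset{a.s.}{\to}X$ follows.

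I do not anticipate a genuine obstacle here: the positivity of the $a_i$ (which bounds the tail weights by the total and rules out sign cancellation in the normalization) and the divergence $b_n\uparrow+\infty$ are precisely the two ingredients the proof needs, and both are assumed. The only point meriting a word of care is the measurability and finiteness of the limit, but almost-sure convergence of the real random variables $Y_i$ already forces $X$ to be a.s. finite and measurable, so restricting attention to $\Omega_0$ disposes of it.
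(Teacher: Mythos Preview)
Your argument is correct and is exactly the standard proof of the Toeplitz averaging lemma, applied pointwise on the almost-sure convergence set. Note that the paper does not supply its own proof of this lemma; it is simply quoted from \cite{r46} as an auxiliary result, so there is nothing further to compare.
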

		
		\begin{lemma}[Kronecker Lemma in \cite{r46}, p.31]\label{leB4}
			Let $\{x_n,n\ge1\}$ be a sequence of real numbers such that $\sum x_n$ converges, and let $\{b_n\}$ be a monotone sequence of positive constants with $b_n\uparrow \infty$. Then $b^{-1}_n\sum_{i=1}^{n}b_ix_i\to 0$.
		\end{lemma}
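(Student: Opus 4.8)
The plan is to prove this via \emph{Abel summation} (summation by parts) followed by an application of the Toeplitz-type Lemma \ref{leB2}. First I would introduce the partial sums $s_n=\sum_{i=1}^n x_i$ with the convention $s_0=0$, and record that the hypothesis ``$\sum x_n$ converges'' means $s_n\to s$ for some finite $s$. Writing $x_i=s_i-s_{i-1}$ and performing summation by parts gives the identity
\begin{equation*}
\sum_{i=1}^n b_i x_i = b_n s_n-\sum_{i=1}^{n-1}(b_{i+1}-b_i)s_i,
\end{equation*}
which is the algebraic core of the argument: it shifts the weighting from the increments $x_i$ onto the differences $b_{i+1}-b_i$.

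Dividing through by $b_n$ then yields
\begin{equation*}
b_n^{-1}\sum_{i=1}^n b_i x_i = s_n-b_n^{-1}\sum_{i=1}^{n-1}(b_{i+1}-b_i)s_i.
\end{equation*}
Since $\{b_n\}$ is monotone increasing to $+\infty$, the increments $a_i:=b_{i+1}-b_i$ are nonnegative and $\sum_{i=1}^{n-1}a_i=b_n-b_1\uparrow\infty$. I would then invoke Lemma \ref{leB2} in its deterministic case (viewing the numbers $s_i$ as constant random variables) with these weights $a_i$ and with $Y_i=s_i\to s$, to conclude $(b_n-b_1)^{-1}\sum_{i=1}^{n-1}(b_{i+1}-b_i)s_i\to s$. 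Because $(b_n-b_1)/b_n\to 1$, the same limit holds with the normalization $b_n$, so the second term on the right converges to $s$.

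Combining the two pieces, the right-hand side tends to $s-s=0$, which is exactly the claim. The argument is entirely elementary and I anticipate no genuine obstacle; the only points requiring minor care are the index bookkeeping in the summation-by-parts identity (in particular the vanishing boundary term $b_1 s_0=0$) and the passage from the normalization $b_n-b_1$ to $b_n$, both of which are routine.
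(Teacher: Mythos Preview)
Your proof is correct and is in fact the standard argument for Kronecker's Lemma. Note, however, that the paper does not supply its own proof of this statement: Lemma~\ref{leB4} is merely quoted as an auxiliary result from \cite{r46}, p.~31, so there is no in-paper argument to compare against. Your Abel-summation reduction to the Toeplitz-type Lemma~\ref{leB2} is exactly the classical route (and essentially the proof one finds in \cite{r46}); the bookkeeping you flag---the boundary term $b_1s_0=0$ and the passage from normalization $b_n-b_1$ to $b_n$---is handled correctly. One very minor point: Lemma~\ref{leB2} as stated in the paper asks for \emph{positive} weights $a_i$, whereas monotonicity of $\{b_n\}$ only guarantees $a_i=b_{i+1}-b_i\ge 0$; this is harmless since the Toeplitz conclusion holds verbatim for nonnegative weights whose partial sums diverge, but you may wish to remark on it.
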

		
		The next lemma extends Ces\`{a}ro's Lemma, stating that if a sequence of numbers converges, the mean of the series also converges to the same limit. This generalization to random variables implies that if a sequence of random variables converges almost surely, their mean also converges almost surely. In processes such as the urn model, the conditional distribution of a random variable is known based on current information, although the limit of the random variable may be uncertain, while the limit of its conditional expectation is determinable. Lemma \ref{leB1} addresses the limit of the scaled mean for a sequence of random variables. According to \cite{r47}, the convergence of the scaled mean of the sequence of random variables based on the limit of their conditional expectations requires specific moment conditions. If the moments of the sequence are unknown, this condition becomes hard to confirm, leading us to propose a sufficient condition for the conditional moments of the random variable.
		
		\begin{lemma}[An extension of Lemma 4.1 in \cite{r47}]\label{leB1}
			Let $1\le p\le 2$, and let $\{\alpha_j,j\ge1\}$ and $\{\beta_j,j\ge1\}$ be two sequences of strictly positive numbers with
			$$
			\beta_n \uparrow+\infty \quad \text { and } \quad \frac{1}{\beta_n} \sum_{j=1}^n \frac{1}{\alpha_j} \longrightarrow \gamma .
			$$
			Let $\left(Y_n\right)$ be a sequence of real random variables, adapted to a filtration $\mathcal{F}$. If $\mathbb E(Y_j \mid \mathcal{F}_{j-1}) \stackrel{\text { a.s. }}{\longrightarrow} Y$ for some real random variable $Y$, then 
			$$
			\frac{1}{\beta_n} \sum_{j=1}^n \frac{Y_j}{\alpha_j}\overset{a.s.}{\to}\ \gamma Y\    on\ the\ set\ \ \left\{\sum_{j=1}^\infty\frac{ \mathbb E(|Y_j|^p|{\cal F}_{j-1})}{(\alpha_j \beta_j)^p}<+\infty\right\}.
			$$
		\end{lemma}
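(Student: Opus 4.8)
The plan is to split each summand into an $\mathcal{F}$-martingale-difference part and a predictable part, and then dispatch the two pieces using the auxiliary results already in hand. Set $D_j=Y_j-\mathbb{E}(Y_j\mid\mathcal{F}_{j-1})$, so that $\{D_j\}$ is a sequence of martingale differences and
$$\frac{1}{\beta_n}\sum_{j=1}^n\frac{Y_j}{\alpha_j}=\frac{1}{\beta_n}\sum_{j=1}^n\frac{D_j}{\alpha_j}+\frac{1}{\beta_n}\sum_{j=1}^n\frac{\mathbb{E}(Y_j\mid\mathcal{F}_{j-1})}{\alpha_j}.$$
The goal is to show that the first term vanishes and the second converges to $\gamma Y$, both almost surely on the prescribed set.

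For the predictable part I would invoke the extension of the Toeplitz lemma (Lemma \ref{leB2}) with weights $a_j=1/\alpha_j$ and partial sums $b_n=\sum_{j=1}^n 1/\alpha_j$. Since $\beta_n\uparrow\infty$ and $\beta_n^{-1}b_n\to\gamma$ (with $\gamma>0$ in all our applications), we have $b_n\uparrow\infty$; hence $\mathbb{E}(Y_j\mid\mathcal{F}_{j-1})\overset{a.s.}{\to}Y$ yields $b_n^{-1}\sum_{j=1}^n\alpha_j^{-1}\mathbb{E}(Y_j\mid\mathcal{F}_{j-1})\overset{a.s.}{\to}Y$. Multiplying by $b_n/\beta_n\to\gamma$ delivers the limit $\gamma Y$ for the second term. (The degenerate case $\gamma=0$ with $b_n$ bounded is immediate, since then the numerator stays bounded while $\beta_n\to\infty$.)

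The core of the argument is the martingale part. Here I would introduce the martingale $M_n=\sum_{j=1}^n D_j/(\alpha_j\beta_j)$ with increments $D_j/(\alpha_j\beta_j)$ and apply the convergence-on-sets result (Lemma \ref{LeB5}). The essential step is to bound the conditional $p$-th moments of $D_j$ by those of $Y_j$: using the $c_r$-inequality $|x-y|^p\le 2^{p-1}(|x|^p+|y|^p)$ together with conditional Jensen $|\mathbb{E}(Y_j\mid\mathcal{F}_{j-1})|^p\le\mathbb{E}(|Y_j|^p\mid\mathcal{F}_{j-1})$, one gets $\mathbb{E}(|D_j|^p\mid\mathcal{F}_{j-1})\le 2^p\,\mathbb{E}(|Y_j|^p\mid\mathcal{F}_{j-1})$ for every $1\le p\le 2$. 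Consequently $\sum_j\mathbb{E}(|D_j/(\alpha_j\beta_j)|^p\mid\mathcal{F}_{j-1})\le 2^p\sum_j\mathbb{E}(|Y_j|^p\mid\mathcal{F}_{j-1})/(\alpha_j\beta_j)^p$, so on the set appearing in the statement the series of conditional $p$-th moments of the increments of $M_n$ is finite, and Lemma \ref{LeB5} shows that $M_n$ converges almost surely on that set.

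Finally I would transfer this into the claimed Cesàro-type limit via Kronecker's lemma (Lemma \ref{leB4}). Working $\omega$ by $\omega$ on the convergence set, the series $\sum_j D_j/(\alpha_j\beta_j)$ converges while $\beta_n$ is a deterministic monotone sequence increasing to infinity; Kronecker's lemma then gives $\beta_n^{-1}\sum_{j=1}^n\beta_j\cdot D_j/(\alpha_j\beta_j)=\beta_n^{-1}\sum_{j=1}^n D_j/\alpha_j\to 0$. Adding the two parts yields $\beta_n^{-1}\sum_{j=1}^n Y_j/\alpha_j\overset{a.s.}{\to}\gamma Y$ on the prescribed set. The \emph{main obstacle} is precisely the moment-transfer step: one must guarantee that the convergence set produced by Lemma \ref{LeB5} contains, up to a null set, the hypothesized set, which is exactly what the bound $\mathbb{E}(|D_j|^p\mid\mathcal{F}_{j-1})\le 2^p\,\mathbb{E}(|Y_j|^p\mid\mathcal{F}_{j-1})$ secures; care is also needed because Lemma \ref{LeB5} yields convergence only on a set, so Kronecker's lemma has to be applied pathwise rather than as a global almost-sure statement.
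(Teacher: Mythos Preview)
Your proposal is correct and follows essentially the same route as the paper: split into a predictable part handled by the Toeplitz lemma (Lemma~\ref{leB2}) and a martingale-difference part, bound $\mathbb{E}(|D_j|^p\mid\mathcal{F}_{j-1})$ by a constant times $\mathbb{E}(|Y_j|^p\mid\mathcal{F}_{j-1})$, invoke Lemma~\ref{LeB5} to make $M_n=\sum_j D_j/(\alpha_j\beta_j)$ converge on the prescribed set, and finish with Kronecker's lemma (Lemma~\ref{leB4}). If anything, your handling of the Toeplitz step---first normalising by $b_n=\sum_{j\le n}\alpha_j^{-1}$ and then multiplying by $b_n/\beta_n\to\gamma$, with the degenerate case $b_n$ bounded treated separately---is slightly more careful than the paper's direct appeal to Lemma~\ref{leB2}.
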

		\begin{proof}
			Let $M_n=\sum_{j=1}^n\frac{Y_j-\mathbb E(Y_j|{\cal F}_{j-1})}{\alpha_j\beta_j}$, then $\{M_n,n\ge1\}$ is a ${\cal F}$-martingale such that 
			\begin{eqnarray*}
				&&\sum_{j=1}^\infty\mathbb E\left[\frac{|Y_j-\mathbb E(Y_j|{\cal F}_{j-1})|^p}{(\alpha_j\beta_j)^p}|{\cal F}_{j-1}\right]\\
				&\le& 2^p\sum_{j=1}^\infty\mathbb E\left[\frac{|Y_j|^p+\left(\mathbb E(|Y_j||{\cal F}_{j-1})\right)^p}{(\alpha_j\beta_j)^p}|{\cal F}_{j-1}\right]\le2^{p+1}\sum_{j=1}^\infty\frac{ \mathbb E(|Y_j|^p|{\cal F}_{j-1})}{(\alpha_j\beta_j)^p},
			\end{eqnarray*}
			thus, we obtain that $M_n$ converges almost surely on the set $\left\{\sum_{j=1}^\infty\frac{ \mathbb E(|Y_j|^p|{\cal F}_{j-1})}{(\alpha_j \beta_j)^p}<+\infty\right\}$ by Lemma \ref{LeB5}. Note that
			\begin{eqnarray*}
				\frac{1}{\beta_n}\sum_{i=1}^n\frac{Y_i}{\alpha_i}=\frac{1}{\beta_n}\sum_{i=1}^n\frac{\mathbb E(Y_i|{\cal F}_{i-1})}{\alpha_i}+\frac{1}{\beta_n}\sum_{i=1}^n\frac{Y_i-\mathbb E(Y_i|{\cal F}_{i-1})}{\alpha_i\beta_i}\beta_i.
			\end{eqnarray*}
			By Lemma \ref{leB2}, the first term on the right-hand side of the equation above converges almost surely to $\gamma Y$. And by Lemma \ref{leB4}, the second term converges almost surely to 0 on the set $\left\{\sum_{j=1}^\infty\frac{\mathbb E(|Y_j|^p|{\cal F}_{j-1})}{(\alpha_j \beta_j)^p}<+\infty\right\}$. Lemma \ref{leB1} is proved.
		\end{proof}
		
		\begin{lemma}[Lemma 3.2 in \cite{r48}]\label{leB3}
			Let $\left\{Z_n, n \geq 0\right\}$ be a random sequence measurable with respect to the filtration $\left\{{\cal F}_n\right\}$. Define
			$$
			\Lambda_n=\mathbb{E}\left(Z_{n+1}-Z_n \mid {\cal F}_n\right), \quad W_n=\mathbb{E}\big[\left(Z_{n+1}-Z_n\right)^2 \mid {\cal F}_n\big].
			$$
			Then, as $n$ tends to infinity, $Z_n$ converges to a finite value almost surely on the event $\sum_n \Lambda_n<\infty$ and $\sum_n W_n<\infty$.
		\end{lemma}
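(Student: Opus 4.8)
The plan is to apply the Doob decomposition to $\{Z_n\}$ and treat the predictable drift and the martingale fluctuation separately, reducing the claim to the almost-sure convergence of two series on the prescribed event. First I would set $\Delta_k=Z_{k+1}-Z_k$ and $\xi_k=\Delta_k-\Lambda_k$, so that $\mathbb{E}(\xi_k\mid{\cal F}_k)=0$ and $\xi_k$ is ${\cal F}_{k+1}$-measurable. Telescoping then gives
\begin{equation*}
Z_n=Z_0+\sum_{k=0}^{n-1}\Lambda_k+\sum_{k=0}^{n-1}\xi_k=:Z_0+A_n+M_n,
\end{equation*}
where $A_n$ is the compensator and $\{M_n\}$ is a martingale relative to $\{{\cal F}_n\}$ with increments $\xi_k$. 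It therefore suffices to prove that both $A_n$ and $M_n$ converge almost surely on the event $\{\sum_n\Lambda_n<\infty\}\cap\{\sum_n W_n<\infty\}$.

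The compensator poses no difficulty: $A_n=\sum_{k=0}^{n-1}\Lambda_k$ is exactly the partial-sum sequence of $\{\Lambda_k\}$, so on $\{\sum_n\Lambda_n<\infty\}$, read as convergence of the series, $A_n$ converges to a finite limit by definition. The martingale term is where the work lies, and it is handled by the local $L_2$ martingale convergence result recorded in Lemma \ref{LeB5} (with $p=2$). The key computation is that, since $\Lambda_k=\mathbb{E}(\Delta_k\mid{\cal F}_k)$ is ${\cal F}_k$-measurable,
\begin{equation*}
\mathbb{E}(\xi_k^2\mid{\cal F}_k)=\mathbb{E}(\Delta_k^2\mid{\cal F}_k)-\Lambda_k^2=W_k-\Lambda_k^2\le W_k.
\end{equation*}
Consequently $\sum_k\mathbb{E}(\xi_k^2\mid{\cal F}_k)\le\sum_k W_k$, so the event $\{\sum_n W_n<\infty\}$ is contained in $\{\sum_k\mathbb{E}(\xi_k^2\mid{\cal F}_k)<\infty\}$. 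After the harmless reindexing $Y_i=\xi_{i-1}$ needed to match the statement of Lemma \ref{LeB5}, so that $M_n=\sum_{i=1}^n Y_i$ and $\mathbb{E}(|Y_i|^2\mid{\cal F}_{i-1})=W_{i-1}-\Lambda_{i-1}^2$, that lemma yields the almost-sure convergence of $M_n$ on $\{\sum_n W_n<\infty\}$.

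Intersecting the two events and summing the two convergent pieces gives the almost-sure convergence of $Z_n=Z_0+A_n+M_n$ to a finite limit, as claimed. I expect the main obstacle to be bookkeeping rather than depth: one must (i) argue convergence only on a set rather than globally, which is precisely the strength of the ``on the set'' formulation of Lemma \ref{LeB5}, and (ii) keep track of the sign of the possibly-signed $\Lambda_k$, so that $\{\sum_n\Lambda_n<\infty\}$ is genuinely a statement about convergence of the compensator series and not merely about its positive part. A minor technical point worth flagging is that $W_k<\infty$ on the relevant event guarantees that the increments are conditionally square-integrable, which is what makes the decomposition and the identity $\mathbb{E}(\xi_k^2\mid{\cal F}_k)=W_k-\Lambda_k^2$ legitimate.
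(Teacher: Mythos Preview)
The paper does not supply its own proof of this lemma; it is simply quoted as Lemma~3.2 of Pemantle and Volkov~\cite{r48} in the list of auxiliary results. Your Doob-decomposition argument, combined with the ``on the set'' $L_2$ martingale convergence of Lemma~\ref{LeB5}, is correct and is exactly the standard route (and the one used in the original reference), so there is nothing to compare.
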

	\end{appendix}

\end{document}